\def\R{\mathbb R}
\def\N{\mathbb N}
\def\negquad{\!\!\!\!}
\def\negqquad{\!\!\!\!\!\!\!\!}
\def\eps{\varepsilon}
\newcommand{\cal}[1]{{\mathcal #1}}
\newtheorem{thm}{Theorem}
\newtheorem{lem}{Lemma}
\theoremstyle{definition}
\newtheorem{definition}{Definition}
\theoremstyle{remark}
\newtheorem{rem}{Remark}
\begin{document}
\title[Local Hadamard well--posedness...]
{Local Hadamard well--posedness and blow--up for reaction--diffusion
equations with non--linear dynamical boundary conditions}
\author{Alessio Fiscella}
\author{Enzo Vitillaro}
\address[E.~Vitillaro]
       {Dipartimento di Matematica ed Informatica, Universit\`a di Perugia\\
       Via Vanvitelli,1 06123 Perugia ITALY}
\email{enzo@dmi.unipg.it}
\address[A.~Fiscella]{Dipartimento di Matematica e Informatica, Universit\`a di Udine\\
Via delle Scienze, 206  33100 Udine ITALY}
\email{alessio.fiscella@uniud.it}

\date{\today}

\keywords{Heat equation, dynamical boundary conditions, reactive
terms. }


\begin{abstract}
The paper deals with local well--posedness, global existence and
blow--up results for reaction--diffusion equations coupled with
nonlinear dynamical boundary conditions. The typical problem studied
is $$\mbox{ $\left\{\begin{array}{llll}
$$u_{t}-\Delta u=\left|u\right|^{p-2}u & \mbox{in } \left(0,\infty\right)\times\Omega,$$\\
$$u=0 & \mbox{on } \left[0,\infty\right)\times\Gamma_{0},$$\\
$$\frac{\partial u}{\partial\nu} = -\left|u_{t}\right|^{m-2}u_{t} & \mbox{on } \left[0,\infty\right)\times\Gamma_{1},$$\\
$$u\left(0,x\right)=u_{0}\left(x\right) & \mbox{in } \Omega$$
\end{array}
\right.$} $$ where $\Omega$ is a bounded open regular domain of
$\mathbb{R}^{n}$ ($n\geq 1$), $\partial\Omega=\Gamma_0\cup\Gamma_1$,
$2\le p\le 1+2^*/2$, $m>1$ and $u_0\in H^1(\Omega)$,
${u_0}_{|\Gamma_0}=0$. After showing local well--posedness in the
Hadamard sense we give global existence and blow--up results when
$\Gamma_0$ has positive surface measure. Moreover we discuss the
generalization of the above mentioned results to more general
problems where the terms  $|u|^{p-2}u$ and $|u_{t}|^{m-2}u_{t}$ are
respectively replaced by $f\left(x,u\right)$ and $Q(t,x,u_t)$ under
suitable assumptions on them.
\end{abstract}

\maketitle

\section{Introduction and main results}
We consider the problem
\begin{equation}
\mbox{
$\left\{\begin{array}{llll}
$$u_{t}-\Delta u=f\left(x,u\right) & \mbox{in } \left(0,\infty\right)\times\Omega,$$\\

$$u=0 & \mbox{on } \left[0,\infty\right)\times\Gamma_{0},$$\\

$$\frac{\partial u}{\partial\nu} = -Q\left(t,x,u_{t}\right) & \mbox{on } \left[0,\infty\right)\times\Gamma_{1},$$\\

$$u\left(0,x\right)=u_{0}\left(x\right) & \mbox{in } \Omega$$
\end{array}
\right.$}\label{eq:P}
\end{equation}
where $u=u\left(t,x\right)$, $t\geq 0$, $x\in\Omega$,
$\Delta=\Delta_{x}$ denotes the Laplacian operator with respect to
the $x$ variable, $\Omega$ is a bounded open subset of
$\mathbb{R}^{n}$ ($n\geq 1$) of class $C^{1}$ (see \cite{brezis2}),
with $\partial\Omega=\Gamma_{0}\cup\Gamma_{1}$,
$\Gamma_{0}\cap\Gamma_{1}=\emptyset$, $\Gamma_{0}$ and $\Gamma_{1}$
are measurable over $\partial\Omega$, endowed with
$(n-1)$--dimensional surface measure $\sigma$. These properties of
$\Omega$, $\Gamma_{0}$ and $\Gamma_{1}$ will be assumed, without
further comments, throughout the paper. The initial datum $u_{0}$
belongs to the energy space $H^{1}(\Omega)$, with the compatibility
condition $u_{0}=0$ on $\Gamma_{0}$. Moreover $Q$ represents a
nonlinear dynamical term such that $Q(t,x,v)v\geq 0$, and $f$
represents a nonlinear internal reaction (or source) term, i.e.
$f(x,u) u\geq 0$.

When $Q\equiv 0$ problem (\ref{eq:P}) is an initial--boundary value
problem related to a  semilinear reaction--diffusion equation with
homogeneous Dirichlet -- Neumann boundary conditions. In this case
local well--posedness, under suitable assumptions on $f$, can be
obtained in a standard way using semigroup theory. See for example
\cite{lunardi, taylor3} or \cite{amann} combined with
\cite[Appendix]{chueshovellerlasiecka}. There is also a wide
literature on global existence and blow--up  for such type of
problems, starting from the classical paper of Levine
{\cite{levine3}. See for example
\cite{BCMR,galaktionovvazquez,ishigeyagisita,levinesiam,lps1,mizoguchi},
\cite[Section~5]{laplace} and \cite{dingguo,
jazarkiwan,payneschaefer2,payneschaefer1,paynesong}. In this case
the concavity method of H. Levine is effective in getting blow--up
results.

When $Q(t,x,u_{t})=\alpha(t,x)u_{t}$ problem (\ref{eq:P}) consists
in a reaction--diffusion equation coupled with a linear dynamical
boundary condition. For well--posedness results, obtained by
semigroup and interpolation theories we refer to  \cite{amann,
escher1,escher2,grobbelaar,hintermann}, while blow--up results were
proven in \cite{escher4,kirane}. We also refer to \cite{BDV} for a
physical motivation of dynamical boundary conditions, and to the
recent papers \cite{fanzhong,vonBelow1, vonBelow2}. Also in this
case the concavity method applies (see \cite{ps:private}) in order
to establish blow--up. We also would like to mention the classical
local--existence and blow--up results in
\cite{levpayne1,levpayne2,levsmith2,levsmith} dealing with the
related case when the source $f$ appears on the boundary condition.

When Q is nonlinear but monotone increasing in $u_{t}$ and, roughly
speaking, either $f\equiv 0$ or $-\Delta - f$ is a monotone
operator, the existence of global solutions of problem (\ref{eq:P})
can be proved by applying the results in \cite{colli}, since the
problem can be written as a doubly nonlinear evolution equation in a
suitable Banach space. We also refer to
\cite{gilardistefanelli,maitrewitomski,SSS} for related results.
When $Q$ is nonlinear and $f$ appears on the boundary condition
instead than in the equation, local and global existence has been
studied in \cite{globalheat}. Next, the same boundary condition
arises in the literature in connection with the wave equation, i.e.
when the heat operator $u_{t}-\Delta u$ in (\ref{eq:P}) is replaced
by the wave operator $u_{tt}-\Delta u$. In particular we refer to
\cite{autuoripucci,bociulasiecka2,bociulasiecka1,CDCL,CDCM,gerbi,lasiecka1,
stable}. Finally we would like to mention that our analysis on
global behavior of the solutions of \eqref{eq:P} is related to the
methods in \cite{lps1}. See also
\cite{APS,georgiev,levserr,ps:private}.

In this paper we study problem (\ref{eq:P}) when, roughly,
$Q(t,x,u_{t})\approx \left|u_{t}\right|^{m-2}u_{t}$ as
$\left|u_{t}\right|\geq 1$, $m >1$, and
$f(x,u)\approx\left|u\right|^{p-2}u$, $p\geq 2$, as
$\left|u\right|\geq 1$. The interest in considering superlinear
terms ($m> 2$) is mainly of theoretical nature. However, a physical
model involving $Q(t,x,u_{t})=u_{t}+\left|u_{t}\right|^{m-2}u_{t}$,
$m > 2$, is given in Appendix \ref{eq:appA}.

In order to state and prove our results in the simplest possible way
we shall first consider the model problem
\begin{equation}
\mbox{
$\left\{\begin{array}{llll}
$$u_{t}-\Delta u=\left|u\right|^{p-2}u & \mbox{in } \left(0,\infty\right)\times\Omega,$$\\
$$u=0 & \mbox{on } \left[0,\infty\right)\times\Gamma_{0},$$\\
$$\frac{\partial u}{\partial\nu} = -\left|u_{t}\right|^{m-2}u_{t} & \mbox{on } \left[0,\infty\right)\times\Gamma_{1},$$\\
$$u\left(0,x\right)=u_{0}\left(x\right) & \mbox{in } \Omega$$
\end{array}
\right.$}\label{Pmod}
\end{equation}
where $m>1$, $p\geq 2$. We  denote by $2^{*}$ the critical exponent
of Sobolev embedding $H^{1}(\Omega)\hookrightarrow L^{q}(\Omega)$,
i.e. $2^{*} = 2n/(n-2)$ when $n\geq 3$ while $2^{*} = \infty$ when
$n=1,2$. Moreover we denote
$\left\|\cdot\right\|_{q}=\left\|\cdot\right\|_{L^{q}(\Omega)}$,
$\left\|\cdot\right\|_{q,\Gamma_{1}}=\left\|\cdot\right\|_{L^{q}(\Gamma_{1})}$
for $1\leq q\leq\infty$, and the Hilbert space
$H^{1}_{\Gamma_{0}}\left(\Omega\right):=\left\{u\in
H^{1}\left(\Omega\right): u_{|\Gamma_{0}}=0\right\}$,
$\left\|u\right\|^{2}_{H^{1}_{\Gamma_{0}}}:=\left\|u\right\|^{2}_{2}+\left\|\nabla
u\right\|^{2}_{2}$,  where $u_{|\Gamma_0}$ stands for the
restriction of the trace of $u$ on $\partial\Omega$ to $\Gamma_0$.
The first aim of the paper is to show that problem (\ref{Pmod}) is
well--posed in
 $H^{1}_{\Gamma_{0}}(\Omega)$.
 The first step in this direction is given by the following  result.

\begin{thm} \label{Thloc} {\bf (Local existence and uniqueness)}
Let $m>1$ and
\begin{equation}
2\leq p\leq 1+\frac{2^{*}}{2}.\label{eq:pr}
\end{equation}
Then, given $u_{0}\in H^{1}_{\Gamma_{0}}(\Omega)$, there is a
$T^{*}=T^{*}(\left\|u_{0}\right\|_{H^{1}_{\Gamma_{0}}},m,p,\Omega,\Gamma_{1})
\in(0,1]$, decreasing in the first variable, such that problem
(\ref{Pmod}) has a unique weak
\begin{footnote}{see Definition~\ref{eq:DEF} below for the precise meaning of weak solutions, which are essentially distributional solutions
enjoying  a suitable regularity}\end{footnote} solution $u$ in
\mbox{$[0,T^{*}]\times\Omega$}. Moreover
\begin{equation}
u\in C(\left[0,T^{*}\right];H^{1}_{\Gamma_{0}}(\Omega)), \label{eq:4.12}
\end{equation}
\begin{equation}
u_{t}\in L^{m}((0,T^{*})\times\Gamma_{1})\cap L^{2}((0,T^{*})\times\Omega) \label{eq:4.13}
\end{equation}
and the energy identity
\begin{equation}
\frac{1}{2}\left\|\nabla u\right\|^{2}_{2}\Big|
^{t}_{s}+\int^{t}_{s}\left\|u_{t}\right\|^{m}_{m,\Gamma_{1}}+\left\|u_{t}\right\|^{2}_{2}=\int^{t}_{s}\int_{\Omega}\left|u\right|^{p-2}uu_{t}
\label{eq:ei2}
\end{equation}
holds for $0\leq s\leq t\leq T^{*}$.
Finally
\begin{equation}
\left\|u\right\|_{C([0,T^{*}];H^{1}_{\Gamma_{0}}(\Omega))}\leq 4\left\|u_{0}\right\|_{H^{1}_{\Gamma_{0}}}. \label{eq:gengis}
\end{equation}

\end{thm}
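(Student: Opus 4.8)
The plan is to prove Theorem~\ref{Thloc} by a fixed point argument combined with a priori energy estimates. The natural idea is to decouple the problem: given a function $v$ in a suitable complete metric space, solve the linear problem with the source term $|v|^{p-2}v$ frozen on the right-hand side, producing a solution operator $v\mapsto u$, and show this operator is a contraction on a small ball of $C([0,T];H^1_{\Gamma_0}(\Omega))$ for $T$ small enough. The linear auxiliary problem is the heat equation $u_t-\Delta u=g(t,x)$ in $\Omega$ with $u=0$ on $\Gamma_0$ and the nonlinear dynamical boundary condition $\partial u/\partial\nu=-|u_t|^{m-2}u_t$ on $\Gamma_1$; this is a maximal monotone (subdifferential) evolution equation in the Hilbert space $L^2(\Omega)\oplus L^m(\Gamma_1)$-type setting, so its well-posedness and the corresponding energy identity should be quotable or established by standard monotone operator theory (Brezis) or by Galerkin approximation with the nonlinear boundary term handled via monotonicity. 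I would carry out the Galerkin route to keep the paper self-contained: approximate in finite-dimensional spaces, derive the energy identity \eqref{eq:ei2} at the approximate level, pass to the limit using monotonicity of $v\mapsto|v|^{m-2}v$ on $\Gamma_1$ and compactness of the trace embedding.

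The key structural input is the energy identity \eqref{eq:ei2}. Multiplying the equation by $u_t$, integrating over $\Omega$, integrating by parts the Laplacian term and using the boundary condition on $\Gamma_1$ (and $u_t=0$ on $\Gamma_0$) gives $\frac12\frac{d}{dt}\|\nabla u\|_2^2+\|u_t\|_{m,\Gamma_1}^m+\|u_t\|_2^2=\int_\Omega|u|^{p-2}uu_t$. The right-hand side is the dangerous term: I would estimate it by Young's inequality as $\frac12\|u_t\|_2^2+\frac12\||u|^{p-1}\|_2^2$, and then control $\|u\|_{2(p-1)}^{2(p-1)}$ using the Sobolev embedding $H^1_{\Gamma_0}(\Omega)\hookrightarrow L^{2(p-1)}(\Omega)$, which is exactly where the hypothesis \eqref{eq:pr}, i.e. $2(p-1)\le 2^*$, is used — this embedding is continuous under that constraint. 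This yields a differential inequality of the form $\frac{d}{dt}\|\nabla u\|_2^2\le C\|u\|_{H^1_{\Gamma_0}}^{2(p-1)}$. Together with a similar bound controlling $\|u\|_2^2$ (from the $\|u_t\|_2^2$ term, or directly from the equation), one gets $\frac{d}{dt}\|u(t)\|_{H^1_{\Gamma_0}}^2\le C\,\Phi(\|u(t)\|_{H^1_{\Gamma_0}}^2)$ for a superlinear $\Phi$, and a standard comparison/bootstrap argument produces $T^*\in(0,1]$ depending only on $\|u_0\|_{H^1_{\Gamma_0}}$, $m$, $p$, $\Omega$, $\Gamma_1$, decreasing in the first variable, on which the bound \eqref{eq:gengis} $\|u\|_{C([0,T^*];H^1_{\Gamma_0})}\le 4\|u_0\|_{H^1_{\Gamma_0}}$ holds. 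This a priori bound is precisely what makes the fixed point ball invariant.

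For the contraction, given $v_1,v_2$ in the invariant ball with corresponding solutions $u_1,u_2$, I would subtract the equations and test the difference against $(u_1-u_2)_t$. The crucial point is that the boundary terms combine into $\int_{\Gamma_1}(|u_{1,t}|^{m-2}u_{1,t}-|u_{2,t}|^{m-2}u_{2,t})(u_{1,t}-u_{2,t})\ge 0$ by monotonicity, so they have the right sign and can be dropped; what survives gives control of $\|\nabla(u_1-u_2)\|_2^2+\int_0^t\|(u_1-u_2)_t\|_2^2$ by $\int_0^t\int_\Omega(|v_1|^{p-2}v_1-|v_2|^{p-2}v_2)(u_1-u_2)_t$. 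I would estimate the latter using the local Lipschitz bound $\big||v_1|^{p-2}v_1-|v_2|^{p-2}v_2\big|\le C(|v_1|^{p-2}+|v_2|^{p-2})|v_1-v_2|$, Hölder, and again the Sobolev embedding (with room to spare when $p<1+2^*/2$, and with a limiting argument at the endpoint), obtaining $\|u_1-u_2\|_{C([0,T];H^1_{\Gamma_0})}\le C\,T^\theta\|v_1-v_2\|_{C([0,T];H^1_{\Gamma_0})}$ for some $\theta>0$; shrinking $T^*$ if necessary makes $CT^\theta<1$. The Banach fixed point theorem then yields the unique weak solution, and \eqref{eq:4.12}, \eqref{eq:4.13} follow from the regularity of the linear solution operator together with the a priori bounds. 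I expect the main obstacle to be the endpoint case $p=1+2^*/2$ in \eqref{eq:pr}, where the Sobolev embedding is borderline and the naive estimates only give boundedness rather than the smallness in $T$ needed for contraction; this typically requires a more careful argument — e.g. working in a space with a slightly weaker topology for the contraction while keeping the $H^1_{\Gamma_0}$ bound, or an absolute-continuity/equi-integrability argument in time — and making the local existence time genuinely depend only on the listed parameters.
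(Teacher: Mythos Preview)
Your approach is essentially the same as the paper's: solve the auxiliary forced problem (via Galerkin plus monotonicity, which is exactly the paper's Theorem~\ref{eq:T1}), then run a Banach fixed point argument in $C([0,T];H^1_{\Gamma_0}(\Omega))$ on the ball of radius $4\|u_0\|_{H^1_{\Gamma_0}}$, using the energy identity, the Sobolev embedding $H^1\hookrightarrow L^{2(p-1)}$, and the monotonicity of the boundary term in the difference estimate.

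One clarification: your worry about the endpoint $p=1+2^*/2$ is misplaced. The smallness factor in the contraction estimate does \emph{not} come from extra room in the Sobolev exponent; it comes from the time integration. Concretely, after the triple H\"older inequality (with exponents $2$, $2^*$, and $s=2\cdot 2^*/(2^*-2)$, noting $s(p-2)\le 2^*$ precisely when $p\le 1+2^*/2$) and Young's inequality absorbing $\|(u_1-u_2)_t\|_2^2$, one gets
\[
\tfrac12\|\nabla(u_1-u_2)(t)\|_2^2+\tfrac12\int_0^t\|(u_1-u_2)_t\|_2^2 \le C R^{2(p-2)}\,T\,\|v_1-v_2\|_{C([0,T];H^1_{\Gamma_0})}^2,
\]
so the contraction constant is $CR^{p-2}\sqrt{T}$, which is made small simply by shrinking $T$, uniformly in $p$ up to and including the endpoint. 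No limiting or equi-integrability argument is needed. Also note that the fixed point yields uniqueness only within the ball $B_R$; the paper closes the genuine uniqueness statement by a short continuation argument (if two solutions agree up to some $T'<T^*$, restart at $T'$ and use continuity to place both in a common small ball).
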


\begin{rem}
The assumption $p\leq 1+2^{*}/2$ in Theorem \ref{Thloc} is quite
restrictive when $n\geq 3$, although it appears often in the
literature quoted above. Clearly it expresses the assumption that
the Nemitski operator $u\mapsto\left|u\right|^{p-2}u$ is locally
Lipschitz from $H^{1}(\Omega)$ to $L^{2}(\Omega)$. Such type of
assumptions has been overcome, in the author's knowledge, either by
getting additional a--priori estimates, as done for example in
\cite{bociulasiecka2,STV}, or using linear semigroup and
interpolation theories, as done for example in \cite{amann,escher2}.
While in this case the nonlinear term $Q$ does not give useful
estimates, being active on the boundary, it prevents to use linear
theory and interpolation of semigroups. Nonlinear semigroup theory
can be used, as in \cite{colli}, but in this case one still needs to
assume that the Nemistski operator above is  locally Lipschitz, as
in \cite{chueshovellerlasiecka}. To prove Theorem~\ref{Thloc} we
found simpler to  first use the monotonicity method of J. L. Lions
and then to use a contraction argument.
\end{rem}

By using the same energy estimates used to prove Theorem \ref{Thloc}
we complete our well--posedness analysis as follows.

\begin{thm} \label{Thalt} {\bf(Continuation and local Hadamard  well--posedness)}
Under the assumption of Theorem \ref{Thloc}, problem (\ref{Pmod})
has a unique weak maximal solution $u$ in $[0,T_{max})\times\Omega$.
Moreover $u\in
C(\left[0,T_{max}\right);H^{1}_{\Gamma_{0}}(\Omega))$,
$$u_{t}\in
L^{m}((0,T)\times\Gamma_{1})\cap L^{2}((0,T)\times\Omega)\quad\text{
for any $T\in(0, T_{max})$,}$$ and the following alternative holds:
\renewcommand{\labelenumi}{(\roman{enumi})}
\begin{enumerate}
    \item  either $T_{max}=\infty$;
    \item  or $T_{max} < \infty$ and
\begin{equation}\label{bu}
\lim_{t \rightarrow T^{-}_{max}}
\left\|u(t)\right\|_{H^{1}_{\Gamma_{0}}} = +\infty.
\end{equation}
\end{enumerate}
Finally $u$ depends continuously on the initial datum $u_{0}$, that
is given any $T\in(0, T_{max})$ and any sequence $(u_{0n})_{n}$ in
$H^{1}_{\Gamma_{0}}(\Omega)$ such that $u_{0n}\rightarrow u_{0}$ in
$H^{1}_{\Gamma_{0}}(\Omega)$, the corresponding weak solution $u^n$
is defined in $\left[0,T\right]\times\Omega$ and
    $
u^n\rightarrow u \mbox{ in }
C(\left[0,T\right];H^{1}_{\Gamma_{0}}(\Omega))$.
\end{thm}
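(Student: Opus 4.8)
The plan is to obtain the maximal solution by a continuation argument and then to deduce the blow--up alternative and the continuous dependence from the uniform bound~\eqref{eq:gengis} together with the difference estimate already present in the proof of Theorem~\ref{Thloc}. First I would isolate two elementary ``semigroup'' properties of weak solutions in the sense of Definition~\ref{eq:DEF}: (a) if $u$ solves~\eqref{Pmod} on $[0,T]\times\Omega$ and $0\le a<T$, then $t\mapsto u(t+a)$ solves~\eqref{Pmod} on $[0,T-a]\times\Omega$ with datum $u(a)$; (b) if $u$ solves on $[0,a]$ and $v$ solves on $[0,b]$ with $v(0)=u(a)$, then the concatenation of $u$ and $v(\cdot-a)$ solves on $[0,a+b]$. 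Both follow by splitting the time integral in the weak formulation and using that the two pieces agree at $t=a$ together with their $C(\,\cdot\,;H^1_{\Gamma_0})$--regularity and the energy identity~\eqref{eq:ei2}. Granting these, let $T_{\max}$ be the supremum of the $T>0$ for which~\eqref{Pmod} has a weak solution on $[0,T]\times\Omega$; by Theorem~\ref{Thloc} and uniqueness all such solutions coincide on common intervals and paste together into a weak solution $u$ on $[0,T_{\max})\times\Omega$, maximal by construction. Concatenating the $C([0,T^*];H^1_{\Gamma_0})$--pieces yields $u\in C([0,T_{\max});H^1_{\Gamma_0}(\Omega))$, and covering any $[0,T]\subset[0,T_{\max})$ by finitely many intervals of length $\le T^*$ gives $u_t\in L^m((0,T)\times\Gamma_1)\cap L^2((0,T)\times\Omega)$.

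For the blow--up alternative, suppose $T_{\max}<\infty$ and yet $\liminf_{t\to T_{\max}^-}\|u(t)\|_{H^1_{\Gamma_0}}<+\infty$; choose $t_k\uparrow T_{\max}$ with $\sup_k\|u(t_k)\|_{H^1_{\Gamma_0}}=:M<\infty$. Since $T^*$ is decreasing in its first argument, the weak solution issued from $u(t_k)$ lives on an interval of length $\ge\tau:=T^*(M,m,p,\Omega,\Gamma_1)>0$, independent of $k$; by property (a) this solution is $u(\cdot+t_k)$, so by (b) $u$ extends to a weak solution on $[0,t_k+\tau]\times\Omega$. Taking $k$ with $t_k+\tau>T_{\max}$ contradicts the definition of $T_{\max}$; hence $\liminf_{t\to T_{\max}^-}\|u(t)\|_{H^1_{\Gamma_0}}=+\infty$, i.e.~\eqref{bu}.

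For continuous dependence, fix $T\in(0,T_{\max})$, set $M:=1+\sup_{[0,T]}\|u(t)\|_{H^1_{\Gamma_0}}<\infty$ (finite by the continuity just proved), $\tau_0:=T^*(M,m,p,\Omega,\Gamma_1)>0$, and choose a partition $0=t_0<t_1<\dots<t_K=T$ with $t_{j+1}-t_j\le\tau_0$. I claim, by induction on $j$, that for all large $n$ the solution $u^n$ is defined on $[0,t_j]\times\Omega$ and $u^n\to u$ in $C([0,t_j];H^1_{\Gamma_0})$. The case $j=0$ is the assumption on the data. Assuming the statement at level $j$, one has $u^n(t_j)\to u(t_j)$ in $H^1_{\Gamma_0}$, hence $\|u^n(t_j)\|_{H^1_{\Gamma_0}}\le M$ for $n$ large; by Theorem~\ref{Thloc}, the monotonicity of $T^*$ and property~(b), $u^n$ continues to $[t_j,t_j+\tau_0]\supseteq[t_j,t_{j+1}]$, and~\eqref{eq:gengis} yields $\|u^n\|_{C([t_j,t_{j+1}];H^1_{\Gamma_0})}\le 4M$ and $\|u\|_{C([t_j,t_{j+1}];H^1_{\Gamma_0})}\le 4M$. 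Writing $w:=u^n-u$, subtracting the weak formulations and testing with $w_t$ (rigorously, via the approximation scheme of Theorem~\ref{Thloc}), the boundary term $\int_{\Gamma_1}(|u^n_t|^{m-2}u^n_t-|u_t|^{m-2}u_t)\,w_t$ is $\ge0$ by monotonicity, while by~\eqref{eq:pr} the map $v\mapsto|v|^{p-2}v$ is Lipschitz from the ball of radius $4M$ of $H^1_{\Gamma_0}(\Omega)$ into $L^2(\Omega)$; a standard absorption and Gronwall's lemma then give
\[
\|w\|_{C([t_j,t_{j+1}];H^1_{\Gamma_0})}\le c(M)\,\|u^n(t_j)-u(t_j)\|_{H^1_{\Gamma_0}},
\]
with $c(M)$ independent of $n$ and $j$. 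The right--hand side tends to $0$, so $u^n\to u$ in $C([t_j,t_{j+1}];H^1_{\Gamma_0})$, which together with the inductive hypothesis yields the claim at level $j+1$; after $K$ steps we reach $[0,T]$.

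The main obstacles are the careful justification of the restriction/concatenation properties (a)--(b) at the splitting time, where the time--distributional terms in the weak formulation must be reconciled using the $C(\,\cdot\,;H^1_{\Gamma_0})$--regularity, and, in the continuous dependence step, the requirement that the Lipschitz constant of the source be uniform in $n$ and $j$: this is exactly what~\eqref{eq:gengis} secures, by trapping every $u^n$ and $u$ in the fixed ball of radius $4M$ of $H^1_{\Gamma_0}(\Omega)$ on each of the finitely many subintervals, so that the Gronwall constant $c(M)$ never degenerates and the finite iteration closes.
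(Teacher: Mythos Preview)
Your proposal is correct and follows essentially the same approach as the paper: a standard continuation/gluing argument for the maximal solution, a contradiction via the uniform lower bound on the existence time $T^*$ for the blow--up alternative, and an iterated Gronwall estimate on finitely many subintervals of length $\le T^*$ for the continuous dependence. Two minor remarks: the rigorous justification of the energy identity for the difference $w=u^n-u$ is obtained in the paper directly from Lemma~\ref{eq:l1} (applied to the linear problem~\eqref{eq:P2} with $g=|u^n|^{p-2}u^n-|u|^{p-2}u$ and $\zeta=-|u^n_t|^{m-2}u^n_t+|u_t|^{m-2}u_t$) rather than from the Galerkin approximation; and your use of $\liminf$ in the blow--up alternative is in fact the correct negation of $\lim=+\infty$, slightly sharper than the paper's phrasing.
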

The second aim of the paper is to study the alternative (i)--(ii) in
previous Theorem by giving global existence versus blow--up results.
When $p=2$ it is straightforward to prove that $u$ is global (see
Theorem~\ref{easytheorem} in Appendix~\ref{appendicep2}), so we
focus on the more interesting case $p>2$. Although we are not able
to give a complete answer, as usual for nonlinear problems, we give
two partial answers when
\begin{equation}
\sigma(\Gamma_{0}) > 0, \label{eq:sig}
\end{equation}
so a Poincar\`e--type inequality holds (see \cite{ziemer}) and
consequently  $\left\|\nabla u\right\|_{2}$ is an equivalent norm in
$H^{1}_{\Gamma_{0}}(\Omega)$. This assumption allows us to use
potential--well arguments. In order to state our next results we
need to recall the stable and unstable sets introduced in
\cite{stable}. When $p>2$ and \eqref{eq:pr} holds  we introduce the
functionals
\begin{equation}
J(u)=\frac{1}{2}\left\|\nabla
u\right\|^{2}_{2}-\frac{1}{p}\left\|u\right\|^{p}_{p},\qquad
K(u)=\left\|\nabla u\right\|^{2}_{2}-\left\|u\right\|^{p}_{p}
\label{eq:K}
\end{equation}
defined for $u\in H^{1}_{\Gamma_{0}}(\Omega)$, and the number
\begin{equation}
d=\inf_{u\in
H^{1}_{\Gamma_{0}}(\Omega)\setminus\{0\}}\,\sup_{\lambda
>0}J(\lambda u). \label{eq:d}
\end{equation}
When $p>2$ and \eqref{eq:pr}, \eqref{eq:sig} hold true it is easy to
see that $d>0$. See Lemma \ref{eq:Ld} below, where two different
characterizations of $d$ are given. We define the stable and
unstable sets as
\begin{equation}W_{s}=\left\{u_{0}\in H^{1}_{\Gamma_{0}}(\Omega): K(u_{0})\geq 0 \mbox{ and } J(u_{0})<d\right\}
\label{eq:Ws}
\end{equation}
\begin{equation}W_{u}=\left\{u_{0}\in H^{1}_{\Gamma_{0}}(\Omega): K(u_{0})\leq 0 \mbox{ and } J(u_{0})<d\right\}. \label{eq:Wu}
\end{equation}
As an application of Theorem \ref{Thalt} and of a potential--well
estimate we give the following global existence result.
\begin{thm}\label{Thglo}{\bf (Global existence)}
Under the assumptions of Theorem \ref{Thloc} and the further
assumptions \eqref{eq:sig} and $p>2$, if $u_{0}\in W_{s}$ then
$T_{max}=\infty$ and $u(t)\in W_{s}$ for all $t\geq 0$.
\end{thm}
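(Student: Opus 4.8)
The plan is to run the standard potential--well (Payne--Sattinger) scheme, using as inputs the energy identity \eqref{eq:ei2}, the blow--up alternative of Theorem~\ref{Thalt}, and the variational characterization of $d$ from Lemma~\ref{eq:Ld}.

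First I would show that $J$ is non--increasing along the solution. By Theorem~\ref{Thalt} the regularity \eqref{eq:4.12}--\eqref{eq:4.13} and the identity \eqref{eq:ei2} hold on every $[0,T]$ with $T<T_{max}$; since the restriction $p\le 1+2^*/2$ makes the Nemytskii operator $v\mapsto|v|^{p-2}v$ continuous from $H^1_{\Gamma_0}(\Omega)$ into $L^2(\Omega)$, the function $\tau\mapsto\frac{1}{p}\|u(\tau)\|_p^p$ is absolutely continuous with derivative $\int_\Omega|u|^{p-2}u\,u_t$, so \eqref{eq:ei2} becomes
\[
J(u(t))-J(u(s))=-\int_s^t\big(\|u_t\|^m_{m,\Gamma_1}+\|u_t\|^2_2\big)\le 0,\qquad 0\le s\le t<T_{max}.
\]
In particular $J(u(t))\le J(u_0)<d$ for every $t\in[0,T_{max})$.

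Next I would prove that the stable set $W_s$ is invariant. Since $u\in C([0,T_{max});H^1_{\Gamma_0}(\Omega))$ and $H^1_{\Gamma_0}(\Omega)\hookrightarrow L^p(\Omega)$ (because $p\le 2^*$), both $t\mapsto J(u(t))$ and $t\mapsto K(u(t))$ are continuous. Arguing by contradiction, if $u(t)\notin W_s$ for some $t\in(0,T_{max})$, then — as $J(u(t))<d$ always — necessarily $K(u(t))<0$ there, so, using $K(u_0)\ge0$ and continuity, there is a first time $t_0\in[0,T_{max})$ with $K(u(t_0))=0$ and $K(u(\cdot))\ge0$ on $[0,t_0]$. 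If $u(t_0)\ne0$, the characterization $d=\inf\{J(v):v\in H^1_{\Gamma_0}(\Omega)\setminus\{0\},\ K(v)=0\}$ provided by Lemma~\ref{eq:Ld} gives $J(u(t_0))\ge d$, contradicting $J(u(t_0))\le J(u_0)<d$. If instead $u(t_0)=0$, then by the Sobolev inequality and $p>2$ one has $K(v)=\|\nabla v\|_2^2-\|v\|_p^p\ge\|\nabla v\|_2^2\big(1-C\|\nabla v\|_2^{p-2}\big)>0$ for $v\ne0$ with $\|\nabla v\|_2$ small, so by continuity $K(u(\cdot))\ge0$ on a whole neighbourhood of $t_0$, contradicting the choice of $t_0$. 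Hence $K(u(t))\ge0$, i.e. $u(t)\in W_s$, for all $t\in[0,T_{max})$.

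Finally I would close the argument via the blow--up alternative. For each $t\in[0,T_{max})$, $u(t)\in W_s$ yields $\|u(t)\|_p^p\le\|\nabla u(t)\|_2^2$, hence
\[
d>J(u(t))=\frac12\|\nabla u(t)\|_2^2-\frac1p\|u(t)\|_p^p\ge\Big(\frac12-\frac1p\Big)\|\nabla u(t)\|_2^2,
\]
so that $\|\nabla u(t)\|_2^2<\frac{2p}{p-2}\,d$ uniformly in $t$; since $\sigma(\Gamma_0)>0$, the Poincar\'e inequality turns this into a uniform bound on $\|u(t)\|_{H^1_{\Gamma_0}}$ over $[0,T_{max})$. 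Thus alternative (ii) of Theorem~\ref{Thalt} cannot occur and $T_{max}=\infty$, which together with the invariance of $W_s$ completes the proof. The scheme is essentially routine; the only points I would treat with some care are the absolute continuity of $\tau\mapsto\|u(\tau)\|_p^p$ used in the first step — this is precisely where $p\le1+2^*/2$ enters, and it should already be implicit in the construction of weak solutions in Theorem~\ref{Thloc} — and the (harmless) exceptional case $u(t_0)=0$ in the invariance argument.
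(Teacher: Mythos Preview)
Your argument is correct and follows the same potential--well scheme as the paper: energy monotonicity, invariance of $W_s$, uniform bound on $\|\nabla u(t)\|_2$, and then the blow--up alternative of Theorem~\ref{Thalt}. The one genuine difference is in how invariance is proved. The paper (in Lemma~\ref{rem}, combined with Lemma~\ref{eq:Ld}) switches to the equivalent description $W_s=W_1=\{J<E_1,\ \|\nabla u\|_2<\lambda_1\}$ and argues that $J(u(t))\ge g(\|\nabla u(t)\|_2)$ with $g(\lambda)=\tfrac12\lambda^2-\tfrac{B_1^p}{p}\lambda^p$, so $\|\nabla u(t)\|_2$ can never hit $\lambda_1$ and by continuity stays below it. You instead stay with the original definition via $K$ and invoke the Nehari characterization $d=\inf\{J(v):v\ne0,\ K(v)=0\}$; this is equally standard and has the mild advantage of not needing the explicit constants $\lambda_1,B_1$. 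One small correction: Lemma~\ref{eq:Ld} as stated does \emph{not} give the Nehari characterization --- it only asserts $d=E_1$ and $W_s=W_1=\widetilde{W_1}$, $W_u=W_2=\widetilde{W_2}$ --- so you should either derive the Nehari identity yourself (it follows in one line from the computation of $\max_\lambda J(\lambda u)$ in that proof, since $K(\lambda(u)u)=0$) or, alternatively, switch to the paper's $W_1$ formulation, which makes the invariance and the final $\|\nabla u\|_2$ bound a single continuity argument. Your care with the absolute continuity of $\tau\mapsto\|u(\tau)\|_p^p$ matches the content of Lemma~\ref{auxiliary}.
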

While Theorem \ref{Thglo} can be seen as a simple application of
Theorem \ref{Thalt}, to recognize that solutions of problem
(\ref{Pmod}) starting in the unstable set blow--up is a more
difficult task. When $m=2$ this  result can be proved by a concavity
argument (see \cite{rendiconti}), which cannot be applied when
$m\neq 2$, making this case more interesting. By combining the main
technique of \cite{lps1} with an estimate used in  \cite{stable} for
wave equation we are able to prove the following result.
\begin{figure}
\includegraphics[width=12truecm]{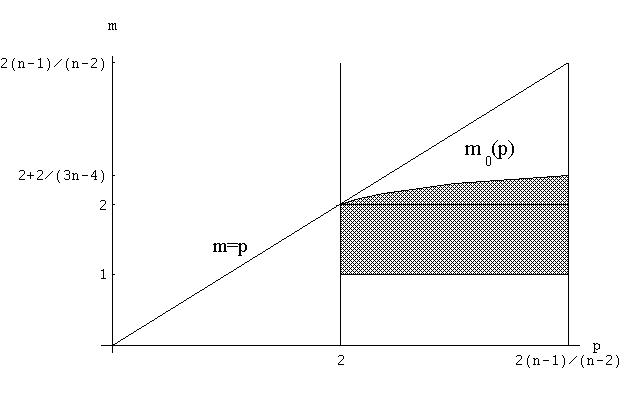}
 \caption{The shaded region
is the set of the $(p,m)$ couples for which the assumptions of
Theorem~\ref{Thblow} hold, when $n\ge 3$. The picture is made in the
case $n=3$.} \label{fig1}
\end{figure}
\begin{thm} \label{Thblow}{\bf (Blow--up)}
Under the assumptions of Theorem \ref{Thloc} and the further
assumptions \eqref{eq:sig},  $p>2$  and
\begin{equation}
m < m_{0}(p):=\frac{2(n+1)p-4(n-1)}{n(p-2)+4}, \label{eq:w4}
\end{equation}
if $u_{0}\in W_{u}$ then  $T_{\max} < \infty$,  $u(t)\in W_{u}$ for
all $t\in \left[0,T_{max}\right)$, and
    $\lim_{t\rightarrow T^{-}_{\max}}\left\|u(t)\right\|_p=
    +\infty$.
\end{thm}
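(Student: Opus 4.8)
The strategy is to combine a potential--well invariance argument with a Levine--Park--Serrin type differential inequality for an auxiliary functional, in the spirit of \cite{lps1}, controlling the boundary term by an interpolation estimate close to the one used in \cite{stable} for the wave equation. We may assume $u_0\neq 0$ (the case $u_0=0$, giving $u\equiv 0$, being excluded). Write $H(t):=d-J(u(t))$, with $J,K$ as in \eqref{eq:K} and $d$ as in \eqref{eq:d}.

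\textbf{Step 1 (invariance of $W_u$ and potential--well bounds).} By the energy identity \eqref{eq:ei2} the map $t\mapsto J(u(t))$ is nonincreasing, so $J(u(t))\leq J(u_0)<d$ and $H(t)\geq H(0)=d-J(u_0)>0$ for all $t\in[0,T_{\max})$. If $u(t_0)\notin W_u$ for some first time $t_0>0$, then, since $J(u(t_0))<d$, continuity of $t\mapsto u(t)$ in $H^1_{\Gamma_0}(\Omega)$ forces $K(u(t_0))=0$; the Sobolev--Poincar\'e inequality (available thanks to \eqref{eq:sig}) keeps $\|\nabla u(t)\|_2$ bounded below on $\{K\leq 0\}$, so $u(t_0)\neq 0$, and then the characterization of $d$ in Lemma~\ref{eq:Ld} gives $J(u(t_0))\geq d$, a contradiction. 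Hence $u(t)\in W_u$ — in fact $K(u(t))<0$ — for all $t\in[0,T_{\max})$. Using $K=2J-\tfrac{p-2}{p}\|\cdot\|_p^p$, the bound $J(u(t))\leq J(u_0)$, and the lower bound $\|u(t)\|_p^p\geq\rho>0$ valid on $W_u\setminus\{0\}$ (again Lemma~\ref{eq:Ld}), one gets constants $\nu,C>0$ depending only on the data such that $-K(u(t))\geq\nu\|u(t)\|_p^p$ and $\|\nabla u(t)\|_2^2\leq C\|u(t)\|_p^p$, while $0<H(0)\leq H(t)\leq C\|u(t)\|_p^p$, for every $t\in[0,T_{\max})$.

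\textbf{Step 2 (the auxiliary functional and the boundary term).} For small $\alpha>0$ and $\epsilon>0$ to be fixed, set
$$\mathcal{H}(t):=H(t)^{1-\alpha}+\tfrac{\epsilon}{2}\|u(t)\|_2^2,$$
so $\mathcal{H}(0)>0$. Differentiating, using \eqref{eq:ei2} and the identity $\tfrac12\tfrac{d}{dt}\|u\|_2^2=-K(u)-\int_{\Gamma_1}|u_t|^{m-2}u_t\,u$ (obtained by testing the equation in \eqref{Pmod} with $u$),
$$\mathcal{H}'(t)=(1-\alpha)H^{-\alpha}\big(\|u_t\|_{m,\Gamma_1}^m+\|u_t\|_2^2\big)+\epsilon\big(-K(u)\big)-\epsilon\int_{\Gamma_1}|u_t|^{m-2}u_t\,u.$$
I would bound $\big|\int_{\Gamma_1}|u_t|^{m-2}u_t\,u\big|\leq\int_{\Gamma_1}|u|\,|u_t|^{m-1}$ and apply Young's inequality with the weight $\delta=\delta(t)$ fixed by $\tfrac{m-1}{m}\delta^{m/(m-1)}=\tfrac{1-\alpha}{\epsilon}H(t)^{-\alpha}$, so that the $\|u_t\|_{m,\Gamma_1}^m$--contribution cancels $(1-\alpha)H^{-\alpha}\|u_t\|_{m,\Gamma_1}^m$ exactly while the remainder equals $C(m,\alpha)\,\epsilon^{m}\,H(t)^{\alpha(m-1)}\|u(t)\|_{m,\Gamma_1}^m$; discarding $(1-\alpha)H^{-\alpha}\|u_t\|_2^2\geq 0$ and using Step~1,
$$\mathcal{H}'(t)\geq \epsilon\,\nu\,\|u(t)\|_p^p-C\,\epsilon^{m}\,H(t)^{\alpha(m-1)}\|u(t)\|_{m,\Gamma_1}^m .$$

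\textbf{Step 3 (the crux: the trace estimate and where \eqref{eq:w4} enters).} It remains to control $\|u\|_{m,\Gamma_1}^m$ on $W_u$ by a subcritical power of $\|u\|_p$. For $n\geq 3$ I would interpolate $L^m(\Gamma_1)$ between $L^2(\Gamma_1)$ and $L^{\bar q}(\Gamma_1)$, $\bar q=\tfrac{2(n-1)}{n-2}$ (the case $n\leq 2$ is easier), using $\|u\|_{\bar q,\Gamma_1}\leq C\|\nabla u\|_2$ (trace embedding), the refined trace inequality $\|u\|_{2,\Gamma_1}^2\leq C\|u\|_2\|\nabla u\|_2$, and then the Step~1 bounds $\|u\|_2\leq C\|u\|_p$, $\|\nabla u\|_2^2\leq C\|u\|_p^p$; this gives $\|u\|_{m,\Gamma_1}^m\leq C\|u\|_p^{\beta}$ with, for $m\geq 2$,
$$\beta=\frac{m(p+2)}{4}+\frac{(n-1)(m-2)(p-2)}{4},$$
and a direct computation shows $\beta<p$ \emph{precisely} when $m<m_0(p)$ as in \eqref{eq:w4} — this is the only place the hypothesis on $m$ is used (for $1<m<2$ one bounds $\|u\|_{m,\Gamma_1}\leq C\|u\|_{2,\Gamma_1}$ and the analogous, milder, constraint always holds). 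Since $H(t)\leq C\|u(t)\|_p^p$ and $\|u(t)\|_p^p\geq\rho$, once $\alpha$ is taken small enough that $p\alpha(m-1)+\beta\leq p$ (possible exactly because $\beta<p$), the bad term in Step~2 is $\leq C\epsilon^{m}\|u(t)\|_p^{p\alpha(m-1)+\beta}\leq C\epsilon^{m}\|u(t)\|_p^p$. As $m>1$, fixing such an $\alpha$ and then $\epsilon$ so small that $C\epsilon^{m}\leq\tfrac12\epsilon\nu$ yields
$$\mathcal{H}'(t)\geq \tfrac12\epsilon\,\nu\,\|u(t)\|_p^p\qquad\text{for all }t\in[0,T_{\max}).$$

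\textbf{Step 4 (finite--time blow--up).} Taking $\alpha$ also smaller than $1-2/p$ and using $H(t)\leq C\|u\|_p^p$, $\|u\|_2^2\leq C\|u\|_p^2$ and $\|u(t)\|_p\geq\rho^{1/p}$, one gets $\mathcal{H}(t)^{1/(1-\alpha)}\leq C\|u(t)\|_p^p$, so the last inequality becomes $\mathcal{H}'(t)\geq\xi\,\mathcal{H}(t)^{1/(1-\alpha)}$ with $\xi>0$ and $\tfrac{1}{1-\alpha}>1$. Integrating on $[0,T_{\max})$ gives $\mathcal{H}(0)^{-\alpha/(1-\alpha)}-\tfrac{\alpha\xi}{1-\alpha}t\geq\mathcal{H}(t)^{-\alpha/(1-\alpha)}>0$ there, whence $T_{\max}\leq\tfrac{1-\alpha}{\alpha\xi}\mathcal{H}(0)^{-\alpha/(1-\alpha)}<\infty$. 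Since $T_{\max}<\infty$, the blow--up alternative of Theorem~\ref{Thalt} gives $\|u(t)\|_{H^1_{\Gamma_0}}\to+\infty$ as $t\to T_{\max}^-$; as on $W_u$ one has $\|u(t)\|_{H^1_{\Gamma_0}}^2\leq C\|u(t)\|_p^p$ (by Poincar\'e, $\|\nabla u\|_2^2\leq 2J(u_0)+\tfrac2p\|u\|_p^p$ and $\|u\|_p^p\geq\rho$), it follows that $\|u(t)\|_p\to+\infty$; together with $u(t)\in W_u$ for all $t\in[0,T_{\max})$ from Step~1 this is the assertion. The main difficulty is Step~3: the \emph{sharp} exponent $\beta$ — and hence the borderline $\beta<p$, equivalent to $m<m_0(p)$ — relies on using the refined trace inequality $\|u\|_{2,\Gamma_1}^2\leq C\|u\|_2\|\nabla u\|_2$ (whose $\|u\|_p$--exponent is strictly below that of the plain trace bound when $p>2$) rather than the naive embedding; the other delicate point is the $\epsilon$-- and $t$--dependent choice of $\delta$ in Young's inequality that makes the leftover of order $\epsilon^m$ instead of $\epsilon$.
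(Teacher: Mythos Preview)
Your argument is correct and reaches the same threshold $m<m_0(p)$, but it is organized differently from the paper's proof and uses a different toolbox for the key trace estimate. Two points of comparison are worth noting.

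\emph{Choice of blow--up functional.} The paper works directly with $H(t)=E_2-J(u(t))$ for a fixed $E_2\in(J(u_0),d)$: testing \eqref{eq:def2.1} with $\phi=u$, it bounds the boundary and interior terms on the right of $\|u\|_p^p-\|\nabla u\|_2^2=\int_{\Gamma_1}|u_t|^{m-2}u_t u+(u_t,u)$ by quantities of the form $[C(\delta)H'(t)+\delta\|\nabla u\|_2^2+\delta\|u\|_p^p]\,\|u\|_p^{-p\bar\beta_s}$, and then uses the potential--well bounds to arrive at the scalar inequality $H'\ge C H^{1+\bar\beta_s}$. You instead use a Georgiev--Todorova style functional $\mathcal H=H^{1-\alpha}+\tfrac{\epsilon}{2}\|u\|_2^2$ with $H=d-J(u)$ and a $t$--dependent Young weight, obtaining $\mathcal H'\ge \xi\,\mathcal H^{1/(1-\alpha)}$. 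Both routes are valid; the paper's is a bit shorter (no extra parameter $\epsilon$, no small--$\alpha$ bookkeeping), while yours cleanly isolates the single exponent $\beta$ governing the trace term.

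\emph{Trace/interpolation step.} The paper obtains $\|u\|_{m,\Gamma_1}\le C\|u\|_2^{1-s}\|\nabla u\|_2^{s}$ by combining the fractional trace embedding $H^s(\Omega)\hookrightarrow L^m(\partial\Omega)$ (valid for $s>\tfrac n2-\tfrac{n-1}{m}$) with the interpolation $\|u\|_{H^s}\le C\|u\|_2^{1-s}\|\nabla u\|_2^{s}$; the existence of an admissible $s$ is exactly \eqref{eq:w4}. Your route --- Lebesgue interpolation on $\Gamma_1$ between $L^2$ and $L^{\bar q}$, $\bar q=\tfrac{2(n-1)}{n-2}$, together with the multiplicative trace bound $\|u\|_{2,\Gamma_1}^2\le C\|u\|_2\|\nabla u\|_2$ and the critical trace $\|u\|_{\bar q,\Gamma_1}\le C\|\nabla u\|_2$ --- effectively fixes $s$ at the endpoint $\tfrac n2-\tfrac{n-1}{m}$ and produces the explicit exponent $\beta$; your computation $\beta<p\Leftrightarrow m<m_0(p)$ is correct for all $n\ge 1$. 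Either method yields the same borderline. One small omission in your write--up: before testing with $\phi=u$ in Step~2 you should note (as the paper does) that $m<m_0(p)\le p\le 1+2^*/2=\tfrac{2(n-1)}{n-2}$ ensures $u(t)_{|\Gamma_1}\in L^m(\Gamma_1)$, so $u(t)$ is an admissible test function in $X$.
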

\begin{rem}
Even if is not evident from \eqref{eq:Ws} and \eqref{eq:Wu},
$W_{s}\cap W_{u}=\emptyset$ (see Lemma \ref{eq:Ld} below),  so
Theorems \ref{Thglo} and \ref{Thblow} are consistent.
\end{rem}
\begin{rem}\label{servo}
Clearly assumption (\ref{eq:w4}) yields $m<p$ since it is trivial to
prove that $m_0(p)\le p$ for $p\ge 2$. It strongly reduces the
applicability of Theorem \ref{Thblow}, as shown by Figure~\ref{fig1}
which illustrates the set of the couples $(p,m)$ satisfying
(\ref{eq:pr}) and (\ref{eq:w4}). As $m_{0}(p)>2$ for $p>2$, the
result is rather sharp in the sublinear case $1<m\leq 2$, while
(\ref{eq:pr}) and (\ref{eq:w4}) force that $m<4$ when $n=1$, $m<3$
when $n=2$ and $m<2+\frac{2}{3n-4}$ when $n\geq 3$. This assumption,
which looks to be a technical one, comes directly from
\cite{stable}, where it was introduced, and is due to the difficulty
in comparing the effect of high order polynomial dissipation, which
is related to the $L^{m}$ norm on $\Gamma_{1}$, with the effect of
the source, related to the $L^{p}$ norm on $\Omega$. After nine
years from its use, the authors are not aware of any improvement.
\end{rem}

As a preliminary step in the proof of Theorem~\ref{Thloc} we give a
well--posedness result for the problem
\begin{equation}
\mbox{ $\left\{\begin{array}{llll}
$$u_{t}-\Delta u=g(t,x) & \mbox{in } \left(0,T\right)\times\Omega,$$\\
$$u=0 & \mbox{on } \left[0,T\right)\times\Gamma_{0},$$\\
$$\frac{\partial u}{\partial\nu} = -\left|u_{t}\right|^{m-2}u_{t} & \mbox{on } \left[0,T\right)\times\Gamma_{1},$$\\
$$u(0,x)=u_{0}(x) & \mbox{in } \Omega$$
\end{array}
\right.$}\label{eq:P1}
\end{equation}
{where $m > 1$, $T > 0$ is arbitrary and $g$ is a given forcing term
acting on $\Omega$. Although problem \eqref{eq:P1} can be studied
using the analysis of \cite{colli}, it is not trivial in that way to
get the following result.

\begin{thm} \label{eq:T1} {\bf (Well--posedness for an auxiliary problem)}
Suppose that $u_{0}\in H^{1}_{\Gamma_{0}}(\Omega)$ and $g\in
L^{2}((0,T)\times\Omega)$. Then there is a unique weak
\begin{footnote}{see Definition~\ref{eq:def1} below for the precise meaning of weak solution}\end{footnote} solution $u$ of
$(\ref{eq:P1})$ in $[0,T]\times\Omega$. Moreover
\begin{equation}
u\in C(\left[0,T\right];H^{1}_{\Gamma_{0}}(\Omega)), \label{eq:4.1}
\end{equation}
\begin{equation}
u_{t}\in L^{2}((0,T)\times\Omega)\cap L^{m}((0,T)\times\Gamma_{1})
\label{eq:4.2}
\end{equation}
and the energy identity
\begin{equation}
\frac{1}{2}\left\|\nabla u\right\|^{2}_{2}\Big
|^{t}_{s}+\int^{t}_{s}\left\|u_{t}\right\|^{2}_{2}+\left\|u_{t}\right\|^{m}_{m,\Gamma_{1}}=\int^{t}_{s}\int_{\Omega}gu_{t}
\label{eq:ei}
\end{equation}
holds for $0\leq s\leq t\leq T$. Finally, given any couple of
initial data $u_{01}, u_{02}\in H^{1}_{\Gamma_{0}}(\Omega)$ and any
couple of forcing terms $g_1,g_2\in L^{2}((0,T)\times\Omega)$,
respectively denoting by $u^1$ and $u^2$ the solutions of
\eqref{eq:P1}
  corresponding to $u_{01}$,  $g_1$ and to $u_{02}$,
$g_2$, the following estimate holds
\begin{equation}\label{hadamardaux}
\|u^1-u^2\|^2_{C(\left[0,T\right];H^{1}_{\Gamma_{0}}(\Omega))}\le
2(1+T)\left(\|u_{01}-u_{02}\|_{
H^{1}_{\Gamma_{0}}}^2+\|g_1-g_2\|_{L^2((0,T)\times\Omega)}^2\right).
\end{equation}
\end{thm}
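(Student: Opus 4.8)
\emph{Strategy.} Following the plan announced for Theorem~\ref{Thloc}, I would get existence by the Galerkin (monotonicity) method of J.~L.~Lions, identify the boundary nonlinearity by a Minty--Browder argument, and prove uniqueness together with the stability bound \eqref{hadamardaux} by an energy estimate on the difference of two solutions. Fix a Galerkin basis $(w_j)_j$ of $H^1_{\Gamma_0}(\Omega)$, which one may take orthonormal in $L^2(\Omega)$ and orthogonal in $H^1_{\Gamma_0}(\Omega)$ (e.g.\ the eigenfunctions of $-\Delta$ with Dirichlet conditions on $\Gamma_0$ and Neumann conditions on $\Gamma_1$), put $V_N=\mathrm{span}\{w_1,\dots,w_N\}$ and let $P_N$ be the $H^1_{\Gamma_0}(\Omega)$--orthogonal projection onto $V_N$, so that $P_N u_0\to u_0$ in $H^1_{\Gamma_0}(\Omega)$ and $\|P_N u_0\|_{H^1_{\Gamma_0}}\le\|u_0\|_{H^1_{\Gamma_0}}$. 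One seeks $u_N(t)=\sum_{j=1}^N c^N_j(t)\,w_j$ with $u_N(0)=P_N u_0$ and
\begin{equation*}
(\dot u_N,w_j)_2+(\nabla u_N,\nabla w_j)_2+\int_{\Gamma_1}|\dot u_N|^{m-2}\dot u_N\,w_j=\int_\Omega g\,w_j,\qquad j=1,\dots,N.
\end{equation*}
For fixed $t$ the left side, as a function of $(\dot c^N_j)_j\in\R^N$, is the identity plus the gradient of the convex nonnegative function $\xi\mapsto\frac1m\int_{\Gamma_1}|\sum_k\xi_k w_k|^m$, hence a continuous, strictly monotone and coercive self--map of $\R^N$, therefore a homeomorphism; thus $(\dot c^N_j)_j$ is a Carath\'eodory function of $(t,(c^N_j)_j)$ with at most linear growth in the second variable, and the system has an absolutely continuous solution on $[0,T]$. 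Multiplying by $\dot c^N_j$, summing and using $(g,\dot u_N)_2\le\tfrac12\|g\|_2^2+\tfrac12\|\dot u_N\|_2^2$ yields $\tfrac12\|\dot u_N\|_2^2+\tfrac12\tfrac{d}{dt}\|\nabla u_N\|_2^2+\|\dot u_N\|_{m,\Gamma_1}^m\le\tfrac12\|g\|_2^2$; after integration and writing $u_N(t)=u_N(0)+\int_0^t\dot u_N$, the sequences $(u_N)_N$, $(\dot u_N)_N$ and $(|\dot u_N|^{m-2}\dot u_N)_N$ are bounded, uniformly in $N$, in $L^\infty(0,T;H^1_{\Gamma_0}(\Omega))$, in $L^2((0,T)\times\Omega)\cap L^m((0,T)\times\Gamma_1)$, and in $L^{m'}((0,T)\times\Gamma_1)$ with $m'=m/(m-1)$.

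\emph{Compactness and the limit equation.} Up to a subsequence, $u_N\rightharpoonup u$ weak--$*$ in $L^\infty(0,T;H^1_{\Gamma_0}(\Omega))$, $\dot u_N\rightharpoonup u_t$ in $L^2((0,T)\times\Omega)$ and $|\dot u_N|^{m-2}\dot u_N\rightharpoonup\chi$ in $L^{m'}((0,T)\times\Gamma_1)$. Since $H^1_{\Gamma_0}(\Omega)\hookrightarrow\hookrightarrow H^s(\Omega)\hookrightarrow L^2(\Omega)$ for $\tfrac12<s<1$, the Aubin--Lions--Simon lemma gives $u_N\to u$ in $C([0,T];H^s(\Omega))$; by continuity of the trace on $H^s(\Omega)$, $u_{N|\Gamma_1}\to u_{|\Gamma_1}$ in $C([0,T];L^2(\Gamma_1))$, and then, differentiating in time distributionally and using the uniform $L^m$ bound, $\dot u_{N|\Gamma_1}\rightharpoonup u_{t|\Gamma_1}$ in $L^m((0,T)\times\Gamma_1)$, whence $u_{t|\Gamma_1}\in L^m((0,T)\times\Gamma_1)$. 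Passing to the limit in the Galerkin equations (first against test functions in $\bigcup_N V_N$, then by density) yields, for a.e.\ $t\in(0,T)$ and all $v\in H^1_{\Gamma_0}(\Omega)$, $(u_t,v)_2+(\nabla u,\nabla v)_2+\int_{\Gamma_1}\chi\,v=\int_\Omega g\,v$, while $u(0)=u_0$ follows from $u_N\to u$ in $C([0,T];L^2(\Omega))$.

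\emph{Energy identity and identification of $\chi$.} Testing this identity with $u_t$ and integrating by parts in space and time should produce, for $0\le s\le t\le T$,
\begin{equation*}
\tfrac12\|\nabla u\|_2^2\Big|_s^t+\int_s^t\|u_t\|_2^2+\int_s^t\!\!\int_{\Gamma_1}\chi\,u_t=\int_s^t\!\!\int_\Omega g\,u_t,
\end{equation*}
and \emph{this is the main obstacle}: since $u_t$ lies only in $L^2((0,T)\times\Omega)$, not in $L^2(0,T;H^1_{\Gamma_0}(\Omega))$, it is not directly an admissible test function, so the identity must be justified by an approximation argument in time (e.g.\ Steklov averages) together with a Green formula legitimate for $u(t)\in H^1_{\Gamma_0}(\Omega)$ with $\Delta u(t)=u_t(t)-g(t)\in L^2(\Omega)$ and conormal derivative $-\chi(t)\in L^{m'}(\Gamma_1)$. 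Granting it, I would start from the Galerkin energy identity (multiply by $\dot c^N_j$, sum, integrate on $(0,T)$), take $\limsup$, and use $u_N(T)\rightharpoonup u(T)$ in $H^1_{\Gamma_0}(\Omega)$ (a bounded sequence with $L^2$ limit $u(T)$), the weak lower semicontinuity of $v\mapsto\int_0^T\|v\|_2^2$, $\|\nabla u_N(0)\|_2\to\|\nabla u_0\|_2$ and $\int_0^T\!\!\int_\Omega g\,\dot u_N\to\int_0^T\!\!\int_\Omega g\,u_t$; comparing with the displayed identity gives $\limsup_N\int_0^T\!\!\int_{\Gamma_1}|\dot u_N|^{m-2}\dot u_N\,\dot u_N\le\int_0^T\!\!\int_{\Gamma_1}\chi\,u_t$. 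Since $\int_0^T\!\!\int_{\Gamma_1}(|\dot u_N|^{m-2}\dot u_N-|\varphi|^{m-2}\varphi)(\dot u_N-\varphi)\ge0$ for every $\varphi\in L^m((0,T)\times\Gamma_1)$, passing to the limit and running the Minty argument (take $\varphi=u_t-\lambda\psi$, divide by $\lambda>0$, let $\lambda\to0^+$, use the hemicontinuity of $r\mapsto|r|^{m-2}r$) yields $\chi=|u_t|^{m-2}u_t$ a.e.\ on $(0,T)\times\Gamma_1$; the displayed identity then becomes \eqref{eq:ei} and $u$ is a weak solution of \eqref{eq:P1} with the regularity \eqref{eq:4.1}--\eqref{eq:4.2}.

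\emph{Time continuity, uniqueness, stability.} From $u\in L^\infty(0,T;H^1_{\Gamma_0}(\Omega))$ and $u_t\in L^2((0,T)\times\Omega)$ one gets $u\in C_w([0,T];H^1_{\Gamma_0}(\Omega))$; since \eqref{eq:ei} makes $t\mapsto\|\nabla u(t)\|_2$ continuous while $t\mapsto\|u(t)\|_2$ is continuous anyway, the norm $t\mapsto\|u(t)\|_{H^1_{\Gamma_0}}$ is continuous, and weak continuity plus norm continuity upgrade this to \eqref{eq:4.1}. Finally, for uniqueness and \eqref{hadamardaux}, let $u^1,u^2$ solve \eqref{eq:P1} with data $(u_{01},g_1)$ and $(u_{02},g_2)$ and set $w=u^1-u^2$; subtracting the two identities \eqref{eq:ei} (the one for $w$ being justified exactly as above), the boundary contributions combine into $\int_{\Gamma_1}(|u^1_t|^{m-2}u^1_t-|u^2_t|^{m-2}u^2_t)(u^1_t-u^2_t)\ge0$ by monotonicity of $r\mapsto|r|^{m-2}r$, so that $\tfrac12\|w_t\|_2^2+\tfrac12\tfrac{d}{dt}\|\nabla w\|_2^2\le\tfrac12\|g_1-g_2\|_2^2$; integrating on $(0,t)$, estimating $\|w(t)\|_2$ through $w(t)=w(0)+\int_0^t w_t$ and collecting the terms gives \eqref{hadamardaux}, and the special case $u_{01}=u_{02}$, $g_1=g_2$ yields uniqueness.
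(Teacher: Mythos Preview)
Your overall strategy---Faedo--Galerkin approximation, a priori energy bounds, weak limits, Minty--Browder to identify the boundary nonlinearity, then an energy estimate on the difference for uniqueness and \eqref{hadamardaux}---is exactly the paper's. There is, however, one genuine gap and one point where you are sketchier than the paper.

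\textbf{The Galerkin basis.} You take eigenfunctions of $-\Delta$ with Dirichlet/Neumann conditions, i.e.\ a basis of $H^1_{\Gamma_0}(\Omega)$. The paper instead takes a basis of the smaller space
\[
X=\{u\in H^1_{\Gamma_0}(\Omega):\ u_{|\Gamma_1}\in L^m(\Gamma_1)\},
\]
and this is not cosmetic. First, for the Galerkin system itself to make sense you need $w_j{}_{|\Gamma_1}\in L^m(\Gamma_1)$ so that $\int_{\Gamma_1}|\dot u_N|^{m-2}\dot u_N\,w_j$ is finite; on a merely $C^1$ domain with mixed boundary conditions there is no reason the eigenfunctions enjoy this trace regularity for large $m$. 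Second, and more seriously, after passing to the limit you claim the identity ``for all $v\in H^1_{\Gamma_0}(\Omega)$'', but the boundary term $\int_{\Gamma_1}\chi\,v$ with $\chi\in L^{m'}$ is defined only for $v_{|\Gamma_1}\in L^m(\Gamma_1)$, i.e.\ only for $v\in X$. When $n\ge 3$ and $m>2(n-1)/(n-2)$ the inclusion $H^1_{\Gamma_0}(\Omega)\subset X$ fails, so your density step (``then by density'') does not reach the test space actually required by Definition~\ref{eq:def1}. The fix is precisely what the paper does: choose from the outset a countable family whose finite linear combinations are dense in $X$ (and orthonormalize in $L^2$); the paper cites that $X$ is dense in $H^1_{\Gamma_0}(\Omega)$ only to approximate $u_0$.

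\textbf{The energy identity.} You correctly flag that testing with $u_t$ is the delicate point and propose Steklov averages plus a Green formula. The paper isolates this step as a separate Lemma (Lemma~\ref{eq:l1}) for the linear problem $u_t-\Delta u=g$, $\partial_\nu u=\zeta$, proved by a Strauss--type time mollification argument: one regularizes $u$ by $\eta_\varepsilon*(\theta_\delta u)$, first lets $\delta\to 0$ using weak continuity $u\in C_w([0,T];H^1_{\Gamma_0})$, then $\varepsilon\to 0$. Your Steklov sketch is morally the same device, but note that the Green formula you invoke needs $u_t$ as an $X$--valued test function, which is again why the paper's Lemma works at the level of the linear problem with $\zeta\in L^{m'}$ rather than trying to insert $u_t$ directly into \eqref{eq:def2}. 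Once Lemma~\ref{eq:l1} is available, both the identification $\chi=|u_t|^{m-2}u_t$ (your Minty argument is exactly the paper's ``classical monotonicity method'') and the stability bound \eqref{hadamardaux} (apply the lemma to $v=u^1-u^2$ with $\zeta=-|u^1_t|^{m-2}u^1_t+|u^2_t|^{m-2}u^2_t$ and drop the nonnegative boundary term) follow just as you outline.
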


\begin{rem} A short comparison with the results which can be obtained by directly applying the abstract
results in \cite{colli} is in order. Assumptions (A1--2) in
\cite[Theorem~1]{colli} force to restrict to the case $m=2$, while
the assumption $D(B)\subset V$ in \cite[Theorems~2--3]{colli}
implies $m\le 2(n-1)/(n-2)$ when $n\ge 3$. Next one can apply
\cite[Theorem~4]{colli} only when $g$ is more regular in time.
Finally, \cite[Theorem~5]{colli} can be applied only when $m=2$.
\end{rem}

In order to explain the main difficulties arising in the proofs of
our main results we now make some comparison with the arguments used
by the second author in \cite{globalheat}. Theorem~\ref{eq:T1} is
essentially proved as \cite[Theorem~1.5]{globalheat}, even if the
necessary adaptations require some care. Theorem~\ref{Thloc} is
proved by a contraction argument instead that a compactness one.
Theorem~\ref{Thalt} has no counterpart in \cite{globalheat}. Finally
the proof of Theorem~\ref{Thblow} requires an untrivial mixing of
the technique of \cite{lps1} with the estimate used in
\cite{stable}, so the authors consider it as the main contribution
in the present paper.

The paper is organized as follows. Section~\ref{section2} deals with
some notation and preliminary material, including the proof of
Theorem~\ref{eq:T1}, Section~\ref{section3} is devoted to local
well--posedness theory for problem \eqref{Pmod} while in
Section~\ref{section4} we study global existence and  blow--up for
it. Finally the results presented in this introduction are
generalized in Section~\ref{generalizations} to problem
\eqref{eq:P}, under suitable assumptions on the nonlinearities $f$
and $Q$. For the sake of simplicity we first present the proofs for
the model problem \eqref{Pmod} and then we give in
\eqref{generalizations} the generalizations needed to handle with
\eqref{eq:P}. This section is naturally addressed to a more
specialized audience and consequently an higher lever of
mathematical expertise of the reader is supposed. In particular most
proofs are only sketched.

\section{Notation and preliminaries}\label{section2}
We introduce the notations

\begin{tabular}{ll}
$C_{c}^\infty (\cal{O})$ & space of compactly supported real--valued $C^{\infty}$ functions \\
&on any open set  $\cal{O}\subset\R^n$,\\
$C^\infty ((a,b);X)$ & space of $C^{\infty}$ $X$--valued functions in  $(a,b)$, $X$ Banach space, \\
$C(\left[a,b\right];X)$ & space of  norm continuous $X$--valued functions in  $\left[a,b\right]$,\\
$C_{w}(\left[a,b\right];X)$ & space of weakly continuous $X$--valued functions in  $\left[a,b\right]$,\\
$q'$ & H\"{o}lder conjugate of $q \ge 1$, i.e. $1/q+1/q'=1$,\\
$X^{'}$ & the dual space of $X$,\\
$(\cdot,\cdot)$ & scalar product in
$L^{2}(\Omega)$.
\end{tabular}

\noindent Moreover we call \textit{the trace theorem} the existence
of the continuous trace mapping
$H^{1}_{\Gamma_{0}}(\Omega)\hookrightarrow L^{2}(\partial\Omega)$.
Moreover the trace of $u$ on $\Omega$ will be denoted by
$u_{|\partial\Omega}$.
 We also call \textit{the Sobolev Embedding Theorem}
the existence of the continuous embedding
    $H^{1}_{\Gamma_{0}}(\Omega)\hookrightarrow L^{p}(\Omega) \mbox{ for } 2\leq p <2^*$.

\noindent We start by setting the Banach space
\begin{equation}\label{Xdef}
X=\left\{u\in H^{1}_{\Gamma_{0}}(\Omega):u_{|\Gamma_{1}}\in
L^{m}(\Gamma_{1})\right\}
\end{equation}
endowed with the norm
$\|u\|_X=\|u\|_{H^1_{\Gamma_0}}+\|u_{|\Gamma_1}\|_{m,\Gamma_1}$. For
elements $u\in X$ we shall use the simpler notation
$\|u\|_{m,\Gamma_1}$ to mean  $\|u_{|\Gamma_1}\|_{m,\Gamma_1}$.
 We now give the precise precise meaning of weak
solution of \eqref{eq:P1}.
\begin{definition}\label{eq:def1} Let $u_{0}\in H^{1}_{\Gamma_{0}}(\Omega)$ and $g\in L^{2}((0,T)\times\Omega)$.
We say that $u$  is a weak solution of $(\ref{eq:P1})$ in
$[0,T]\times\Omega$ if
\renewcommand{\labelenumi}{(\alph{enumi})}
\begin{enumerate}
\item   $u$ $\in L^{\infty}(0,T;H^{1}_{\Gamma_{0}}(\Omega)),u_{t}\in L^{2}((0,T)\times\Omega);$

\item the spatial trace of $u$ on
$(0,T)\times\partial\Omega$ (which exists by the trace theorem) has
a distributional time derivative on $(0,T)\times\partial\Omega$,
belonging to \mbox{$L^m((0,T)\times\partial\Omega)$};
\item  for all $\phi\in X$ and for almost all \noindent$t\in\left[0,T\right]$ the
distribution identity
\begin{equation}
\int_{\Omega}u_{t}(t)\phi + \int_{\Omega}\nabla u(t)\nabla \phi +
\int_{\Gamma_{1}}\left|u_{t}(t)\right|^{m-2}u_{t}(t)\phi=\int_{\Omega}g(t)\phi
\label{eq:def2}
\end{equation}
holds true;
\item $u(0)=u_{0}$.
\end{enumerate}

\end{definition}
} Note that, in (d), $u(0)$ makes sense since, by (a),
$$
u\in H^{1}(0,T;L^{2}(\Omega))\hookrightarrow C(\left[0,T\right];L^{2}(\Omega)).
$$
In order to prove Theorem~\ref{eq:T1} we need the following Lemma,
which extends \cite[Theorems~3.1 and 3.2]{strauss} to the present
situation. Its proof consists in a rather technical application of
the arguments in \cite{strauss} which is given in
Appendix~\ref{Appendice A} for the reader convenience.

\begin{lem} \label{eq:l1} Let $0<T<\infty$, $m>1$,
\begin{equation}
u_0\in H^1_{\Gamma_0}(\Omega), \quad g\in L^2((0,T)\times\Omega),
\quad \zeta\in L^{m'}((0,T)\times\Gamma_{1}) \label{eq:b8}
\end{equation}
and suppose that $u$ is a weak solution of
\begin{equation}
\mbox{
$\left\{\begin{array}{llll}
$$u_{t}-\Delta u=g(t,x) & \mbox{in } (0,T)\times\Omega,$$\\
$$u=0 & \mbox{on } \left[0,T\right)\times\Gamma_{0},$$\\
$$\frac{\partial u}{\partial\nu} = \zeta & \mbox{on } \left[0,T\right)\times\Gamma_{1},$$\\
$$u(0,x)=u_{0}(x) & \mbox{in } \Omega$$
\end{array}
\right.$}\label{eq:P2}
\end{equation}
i.e. a function
\begin{equation}
u\in L^{\infty}(0,T;H^{1}_{\Gamma_{0}}(\Omega)) \label{eq:b1}
\end{equation}
such that
\begin{equation}
u_{t}\in L^{2}((0,T)\times\Omega), \label{eq:b2}
\end{equation}
the spatial trace of $u$ on $(0,T)\times\partial\Omega$
$(\mbox{which exists by the trace theorem}) $ has a distributional
time derivate on $(0,T)\times\partial\Omega$ belonging to
$L^{m}((0,T)\times\partial\Omega)$, and, for all $\phi\in X$ and
almost all $t\in\left[0,T\right]$ the function $u$ satisfies
\begin{equation}
\int_{\Omega}u_{t}(t)\phi + \int_{\Omega}\nabla u(t)\nabla \phi - \int_{\Gamma_{1}}\zeta(t)\phi=\int_{\Omega}g(t)\phi. \label{eq:b4}
\end{equation}
Then
\begin{equation}
u\in C(\left[0,T\right];H^{1}_{\Gamma_{0}}(\Omega)) \label{eq:1.9}
\end{equation}
and the energy identity
\begin{equation}
\frac{1}{2}\left\|\nabla
u\right\|^{2}_{2}\Big|^{t}_{s}+\int^{t}_{s}\left\|u_{t}\right\|^{2}_{2}-\int^{t}_{s}\int_{\Gamma_{1}}\zeta
u_{t}=\int^{t}_{s}\int_{\Omega}gu_{t} \label{eq:1.10}
\end{equation}
holds for $0\leq s\leq t\leq T$.
\end{lem}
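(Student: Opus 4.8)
The plan is to recover the missing time-regularity and the energy identity by a regularization-in-time argument, following the template of Strauss \cite{strauss} adapted to the dynamical boundary condition. The basic difficulty is that from \eqref{eq:b1}--\eqref{eq:b2} we only know $u\in L^\infty(0,T;H^1_{\Gamma_0})$ with $u_t\in L^2((0,T)\times\Omega)$, so a priori $t\mapsto\nabla u(t)$ is only weakly continuous into $L^2(\Omega)^n$; to promote this to norm continuity we must test \eqref{eq:b4} with $\phi=u_t$, but $u_t$ is not an admissible test function because it is only $L^2$ in space, not in $X$, and the boundary term $\int_{\Gamma_1}\zeta u_t$ need not even make sense pointwise in $t$. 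So first I would show the formal energy identity is correct, then upgrade continuity.

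The key steps, in order: (1) \emph{Mollification in time.} Fix $0<\delta<(T-t)\wedge s$ and for a test function $w\in X$ form the Steklov-type averages or a standard mollifier $\rho_\delta$ acting in the $t$-variable; plug $\phi=(\rho_\delta * u)(t)$-type combinations, or more precisely test the identity at time $\tau$ against $\rho_\delta(t-\tau)$ and integrate, obtaining a regularized version of \eqref{eq:b4} in which all quantities are smooth in $t$. (2) \emph{Differentiate and integrate.} In the regularized identity one may legitimately take the test function to be $\partial_t$ of the mollified solution, since the mollification lives in $H^1_{\Gamma_0}$ in space with an $L^2$-in-space time derivative, and — crucially — the mollified boundary trace has its time derivative in $L^m((0,T)\times\partial\Omega)$ by hypothesis (b), so the boundary pairing $\int_{\Gamma_1}\zeta_\delta\,(u_t)_\delta$ is the pairing of $L^{m'}$ against $L^m$ and is finite; integrating over $(s,t)$ yields a mollified energy identity. (3) \emph{Pass to the limit $\delta\to0$.} The interior terms $\int_s^t\|(u_t)_\delta\|_2^2\to\int_s^t\|u_t\|_2^2$ and $\int_s^t\int_\Omega g_\delta (u_t)_\delta\to\int_s^t\int_\Omega g u_t$ by standard strong $L^2$ convergence of mollifications; for the boundary term one uses that the distributional time derivative of the trace is in $L^m$ (so its mollifications converge strongly in $L^m((0,T)\times\Gamma_1)$) against $\zeta\in L^{m'}$; the term $\tfrac12\|\nabla u\|_2^2\big|_s^t$ survives because $\nabla u_\delta(\sigma)\to\nabla u(\sigma)$ in $L^2(\Omega)$ for a.e.\ $\sigma$, in particular at $\sigma=s,t$ after possibly redefining on a null set. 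This gives \eqref{eq:1.10} for a.e.\ $s\le t$, hence for all $s\le t$ once continuity is established. (4) \emph{Continuity.} From the energy identity the function $t\mapsto\tfrac12\|\nabla u(t)\|_2^2$ equals a continuous function of $t$ (an initial value plus absolutely continuous integrals); combined with weak continuity of $t\mapsto\nabla u(t)$ in $L^2(\Omega)$ (which follows from $u\in H^1(0,T;L^2)\hookrightarrow C([0,T];L^2)$ together with the $L^\infty$-$H^1$ bound via a standard lemma), convergence of both the weak limit and the norm yields strong convergence, i.e.\ \eqref{eq:1.9}.

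The main obstacle is step (2)--(3): making rigorous sense of the boundary energy term $\int_s^t\int_{\Gamma_1}\zeta u_t$ and its mollified approximants. One must be careful that the commutator between the time-mollification and the (linear but $t$-dependent) trace operator is controlled, and that $\zeta$ being only $L^{m'}((0,T)\times\Gamma_1)$ — not better — still suffices for the passage to the limit; this is exactly where the hypothesis that the \emph{distributional} time derivative of the trace lies in $L^m((0,T)\times\partial\Omega)$ is used in an essential way, and it is the analogue of the delicate point in \cite[Theorems~3.1 and 3.2]{strauss}. Everything else is routine mollifier bookkeeping, which I relegate to Appendix~\ref{Appendice A}.
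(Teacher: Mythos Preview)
Your plan is essentially the paper's own argument: time--mollification \`a la Strauss, testing with the time derivative of the regularized solution, passing to the limit in the interior and boundary terms, and then upgrading weak to strong continuity via norm convergence. The one implementation difference worth flagging is the handling of the endpoint term $\tfrac12\|\nabla u\|_2^2\big|_s^t$. The paper first invokes \cite[Theorem~2.1]{strauss} to get $u\in C_w([0,T];H^1_{\Gamma_0}(\Omega))$, then uses a \emph{two--parameter} regularization --- a piecewise--linear cutoff $\theta_\delta$ approximating $\chi_{[s,t]}$ together with a mollifier $\eta_\varepsilon$ --- so that after sending $\delta\to0$ (with $\varepsilon$ fixed) the endpoint contribution becomes $(\rho_\varepsilon\ast(\theta_0\nabla u)(t),\nabla u(t))$, which converges to $\tfrac12\|\nabla u(t)\|_2^2$ at the \emph{specific} $s,t$ thanks to weak continuity. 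Your single--mollifier route instead produces the identity only for a.e.\ $s,t$ and then bootstraps via the continuity step; this is logically sound but you should be explicit that weak continuity plus a.e.\ norm continuity forces $\|\nabla u(t)\|_2^2$ to coincide \emph{everywhere} with its continuous representative (weak lower semicontinuity gives one inequality, and the reverse uses that the weakly continuous $u$ is already the pointwise--everywhere representative). Either bookkeeping works; the paper's two--parameter trick avoids the a.e.--to--everywhere discussion at the cost of a slightly longer limit computation.
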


\begin{proof}[\bf Proof of Theorem \ref{eq:T1}] To prove the existence of a weak solution of \eqref{eq:P1} we apply the Faedo--Galerkin procedure.
Let $(w_{k})_{k}$ be a sequence of linearly independent vectors in
the space  $X$, which was defined in \eqref{Xdef}, whose finite
linear combinations are dense in it. By using the Graham--Schmidt
orthonormalization process, we can take $(w_{k})_{k}$ to be
orthonormal in $L^{2}(\Omega)$. Since (see \cite[Lemma A1, Appendix
A]{global})  $X$ is dense in $H^{1}_{\Gamma_{0}}(\Omega)$ for all
$k\in\mathbb{N}$ there are real numbers $y^{j}_{0k}$,
$j=1,\ldots,k$, such that
\begin{equation}
u_{0k}=\sum\limits^{k}_{j=1}y^{j}_{0k}w_{j}\rightarrow u_{0}\mbox{
in }H^{1}_{\Gamma_{0}}(\Omega). \label{eq:1.4}
\end{equation}

\noindent For any fixed $k\in\mathbb{N}$ we look for approximate
solutions of $(\ref{eq:P1})$, that is for solutions
$u^k(t)=\sum\limits^{k}_{j=1}y^{j}_k(t) w_{j}$, of the
finite--dimensional problem
\begin{equation}
\mbox{
$\left\{\begin{array}{ll}
$$(u^{k}_{t},w_{j})+(\nabla u^{k}_{t},\nabla w_{j})+\int_{\Gamma_{1}}\left|u^{k}_{t}\right|^{m-2}u^{k}_{t}w_{j}=\int_{\Omega}gw_{j}, & \mbox{    } j=1,\ldots,k,$$\\

$$u^{k}(0)=u_{0k}. $$
\end{array}
\right.$}\label{eq:1.1}
\end{equation}
In order to recognize that \eqref{eq:1.1} has a local solution, we
set
\begin{eqnarray}
y_{0k} &=& (y^{1}_{0k},\ldots,y^{k}_{0k})^{T},\qquad\qquad\,
y_{k} = (y^{1}_{k},\ldots,y^{k}_{k})^{T},\\
A_{k} &=& ((\nabla w_{i},\nabla w_{j}))_{i,j=1,\ldots,k},\quad
B_{k}(x) = (w_{1}(x),\ldots,w_{k}(x))^{T},\\
G_{k}(y) &=& y+\int_{\Gamma_{1}}\left|B_{k}(x)\cdot
y\right|^{m-2}B_{k}(x)\cdot y B_{k}(x)dx, \mbox{     }
y\in\mathbb{R}^{k},\label{sdai}
\end{eqnarray}
and $ H_{k}(t)=\int_{\Omega}g(t,x)B_{k}(x)dx$,  so problem
\eqref{eq:1.1} can be rewritten as
\begin{equation}
\mbox{
$\left\{\begin{array}{l}
$$G_{k}(y'_{k}(t))+A_{k}y_{k}(t)=H_{k}(t), $$\\
$$y_{k}(0)=y_{0k}. $$
\end{array}
\right.$}\label{eq:1.2}
\end{equation}
Then, using the arguments in \cite[Proof of Theorem 1.5]{globalheat}
we get that $G_k$ is an homeomorphism from $\R^k$ into iteself, with
inverse $G_k^{-1}$, and that  \eqref{eq:1.2}  has a  solution
$y_k\in W^{1,1}(0,t_{k})$ for some $t_{k}\in (0,T]$, and
consequently \eqref{eq:1.1} has a solution $u^k\in
W^{1,1}(0,t_k;X)$. Moreover, since $G_{k}(y)y\geq
\left|y\right|^{2}$ for all $y\in\mathbb{R}^{k}$, by the Schwartz
inequality it follows that $\left|y\right|\leq
\left|G_{k}(y)\right|$. Then $\left|G^{-1}_{k}(y)\right|\leq
\left|y\right|$ for all $y\in\mathbb{R}^{k}$, so that
\begin{equation}
\left|G^{-1}_{k}(H_{k}(t)-A_{k}y_{k}(t))\right|\leq \left|H_{k}(t)\right|+ \left\|A_{k}\right\|\left|y_{k}\right|. \label{eq:1.7}
\end{equation}

Multiplying $(\ref{eq:1.1})$ by $(y^{j}_{k})'$ and summing for $j=1,\ldots,k$, we obtain the energy identity (here and in the sequel, explicit dependence on $t$ will be omitted, when clear)
\begin{equation}
\frac{d}{dt}\left(\frac{1}{2}\left\|\nabla u^{k}\right\|^{2}_{2}\right)+\left\|u^{k}_{t}\right\|^{2}_{2}+
\left\|u^{k}_{t}\right\|^{m}_{m,\Gamma_{1}}=\int_{\Omega}gu^{k}_{t}. \label{eq:1.12}
\end{equation}
Integrating over $(0,t)$, $0 < t < t_{k}$, and using Young inequality, we get
\begin{equation*}
\frac{1}{2} \left\| \nabla u^{k}\right\|^{2}_{2} +\int^{t}_{0}\left(\left\|u^{k}_{t}\right\|^{2}_{2}+ \left\| u^{k}_{t}\right\|^{m}_{m,\Gamma_{1}}\right)\leq \frac{1}{2} \left\|\nabla u_{0k}\right\|^{2}_{2} + \frac{1}{2} \left\|g\right\|^{2}_{L^{2}((0,T)\times\Omega)} +\frac{1}{2}\int^{t}_{0} \left\| u^{k}_{t}\right\|^{2}_{2}.
\end{equation*}
Then, using $(\ref{eq:1.4})$, there exists
    $C=C\left(\left\|\nabla
    u_{0}\right\|_{2},\left\|g\right\|_{L^{2}((0,T)\times\Omega)}\right)>0$
such that
\begin{equation}
\begin{alignedat}2
&\|\nabla u^{k}\|_{L^{\infty}(0,t_{k};L^{2}(\Omega))} && \leq C, \\
&\|u^{k}_{t}\|_{L^{2}((0,t_{k})\times\Omega)} && \leq C, \\
&\|u^{k}_{t}\|_{L^{m}((0,t_{k})\times\Gamma_{1})} && \leq C, \\
&\||u^{k}_{t}|^{m-2}u^{k}_{t}\|_{L^{m'}((0,t_{k})\times\Gamma_{1})}&&
\leq C,
\end{alignedat}
\label{eq:1.5}
\end{equation}
for $k\in\mathbb{N}$. By  $(\ref{eq:1.4})$,  $(\ref{eq:1.5})$ and
H\"{o}lder inequality in time it follows that
\begin{equation}
\left\|u^{k}\right\|_{2}\leq
\left\|u_{0k}\right\|_{2}+\int^{t}_{0}\left\|u^{k}_{t}\right\|_{2}
\leq \left\|u_{0k}\right\|_{2}+
T^{1/2}\left(\int^{t}_{0}\left\|u^{k}_{t}\right\|_{2}\right
)^{1/2}\leq C' \label{eq:1.6}
\end{equation}
for some $
C'=C'\left(\left\|u_{0}\right\|_{H^{1}_{\Gamma_{0}}},\left\|g\right\|_{L^{2}((0,T)\times\Omega)},T\right)>0$.
 Since $(w_{k})_{k}$ is orthonormal in $L^{2}(\Omega)$, we have
 $\left|y_{k}(t)\right|=\left\|u^{k}(t)\right\|_{2}$,
 so $(\ref{eq:1.6})$ yields that $\left|y_{k}(t)\right|\leq C'$. Then, by \eqref{eq:1.7}
\[\left|G^{-1}_{k}(H_{k}(t)-A_{k}y_{k}(t))\right|\leq \left|H_{k}(t)\right|+
C' \left\|A_{k}\right\| \in L^{1}(0,T).
\]
We can then apply \cite[Theorem 1.3,~Chapter 2]{coddington} to
conclude that $t_{k}=T$ for $k=1,\ldots,n$. Next, by
$(\ref{eq:1.5})$ and $(\ref{eq:1.6})$, it follows that, up to a
subsequence,
\begin{equation}
\begin{alignedat}2
u^{k}&\rightarrow u && \mbox{  weakly* in  } L^{\infty}(0,T;H^{1}_{\Gamma_{0}}(\Omega)), \\
u^{k}_{t}&\rightarrow u_{t} && \mbox{  weakly in  } L^{2}((0,T)\times\Omega), \\
u^{k}_{t}&\rightarrow \varphi && \mbox{  weakly in  } L^{m}((0,T)\times\Gamma_{1}), \\
\left|u^{k}_{t}\right|^{m-2}u^{k}_{t}&\rightarrow \chi \quad &&
\mbox{ weakly in  } L^{m'}((0,T)\times\Gamma_{1}).
\end{alignedat}
\label{eq:1.8}
\end{equation}
A consequence of the convergences \eqref{eq:1.8} and of Aubin--Lions
compactness Lemma (see \cite{breziscazenave, aubin, simon}) is that
$u^{k}\rightarrow u$ strongly in
$C(\left[0,T\right];L^{2}(\Omega))$, so that $u(0)=u_{0}$. It
follows in a standard way (see, for example, \cite[p. 272]{global})
that $\varphi$ is the distribution time derivate of $u$ on
$(0,T)\times\partial\Omega$, i.e. $\varphi=u_{t}$.

Next, multiplying $(\ref{eq:1.1})$ by $\phi\in C^{\infty}_{c}(0,T)$,
integrating on $(0,T)$, passing to the limit as $k\rightarrow\infty$
(using $\eqref{eq:1.8}$) and finally using the density of the finite
linear combinations of $(w_{k})_{k}$ in $X$, we obtain $
\int^{T}_{0}\left[(u_{t},w)+(\nabla u_{t},\nabla
w)+\int_{\Gamma_{1}}\chi w-\int_{\Omega}gw\right]\phi=0$ for all
$w\in X$, $\phi\in C^{\infty}_{c}(0,T)$. Consequently
    $(u_{t},w)+(\nabla u,\nabla w)+\int_{\Gamma_{1}}\chi w=\int_{\Omega}gw
$ almost everywhere in $(0,T)$. Then to prove that $u$ is a weak
solution of $(\ref{eq:P1})$ we have only to show that
\begin{equation}
\chi=\left|u_{t}\right|^{m-2}u_{t}\mbox{  a.e. on }(0,T)\times\Gamma_{1}. \label{eq:1.11}
\end{equation}
By Lemma \ref{eq:l1} we obtain $(\ref{eq:1.9})$ and the energy
identity
\begin{equation}
\frac{1}{2}\left\|\nabla u\right\|^{2}_{2}\Big
|^{T}_{0}+\int^{T}_{0}\left\|u_{t}\right\|^{2}_{2}+\int^{T}_{0}\int_{\Gamma_{1}}\chi
u_{t}=\int^{T}_{0}\int_{\Omega}gu_{t} \label{eq:1.13}
\end{equation}

The classical monotonicity method (see~\cite{lions} or  \cite[p.
186]{globalheat}) then allows us to prove $(\ref{eq:1.11})$.

Finally, to prove the estimate \eqref{hadamardaux}, which also
yields the uniqueness of the solution, we recognize that $v=u_1-u_2$
is a weak solution of problem \eqref{eq:P2} with $g=g_1-g_2$, $\xi=
-|u^1_t|^{m-2}u^1_t+|u^2_t|^{m-2}u^2_t$ and $u_0=u_{01}-u_ {02}$.
Then, by Lemma \ref{eq:l1}, using the monotonicity of the map $x
\rightarrow \left|x\right|^{m-2}x$  we get the estimate
$$\frac 12 \|\nabla v(t)\|_2^2+\int_0^t \|v_t\|_2^2\le \int_0^t g v_t+\frac
12 \|\nabla u_0\|_2^2\qquad \text{for all $t\in [0,T)$}.$$ By Young
inequality
$$  \|\nabla v(t)\|_2^2+ \int_0^t \|v_t\|_2^2\le
\|g\|_{L^2((0,T)\times\Omega)}^2+ \|\nabla u_0\|_2^2\qquad \text{for
all $t\in [0,T)$}.
$$
Moreover $\|v(t)\|_2^2\le
\left(\|u_0\|_2+\int_0^t\|v_t\|_2\right)^2\le
2\|u_0\|_2^2+2T\int_0^T \|v_t\|_2^2$. By combining the last two
estimates we get \eqref{hadamardaux} and conclude the proof.
\end{proof}

\section{Proofs of Theorems~\ref{Thloc} and
\ref{Thalt}.}\label{section3}

This section is devoted to prove our main well--posedness
Theorems~\ref{Thloc} and \ref{Thalt}. We first precise the meaning
of weak solution for problem \eqref{Pmod}.
\begin{definition} \label{eq:DEF} Let $u_{0}\in
H^{1}_{\Gamma_{0}}(\Omega)$. When assumption \eqref{eq:pr} holds we
say that $u$ is a weak solution of problem \eqref{Pmod} in
$[0,T]\times\Omega$ if (a--d) of Definition~\ref{eq:def1} hold, with
the distribution identity \eqref{eq:def2} being replaced by
\begin{equation}
\int_{\Omega}u_{t}(t)\phi + \int_{\Omega}\nabla u(t)\nabla \phi + \int_{\Gamma_{1}}\left|u_{t}(t)\right|^{m-2}u_{t}(t)\phi
=\int_{\Omega}\left|u(t)\right|^{p-2}u(t)\phi. \label{eq:def2.1}
\end{equation}
Moreover we say that $u$ is a weak solution of problem \eqref{Pmod}
in $[0,T)\times\Omega$ if $u$ is  a weak solution of \eqref{Pmod} in
$[0,T']\times\Omega$ for all $T'\in (0,T)$.
\end{definition}

\begin{rem}
Since $p\leq 1+2^*/2$ and $\varphi\in H^{1}_{\Gamma_{0}}(\Omega)$
the integral in the right--hand side of (\ref{eq:def2.1}) makes
sense due to the Sobolev Embedding Theorem.
\end{rem}

\begin{proof}[\bf Proof of Theorem \ref{Thloc}] We set, for any $0<T<\infty$, the Banach space
    $Y_{T}=C(\left[0,T\right];H^{1}_{\Gamma_{0}}(\Omega))$
endowed with the usual norm $\left\|u\right\|_{Y_{T}}=\left\|u\right\|_{L^{\infty}(0,T;H^{1}_{\Gamma_{0}}(\Omega))}$, and the closed convex set
    $X_{T}=\left\{u\in Y_{T}: u(0)=u_{0}\right\}$.
Let  $u\in X_{T}$. By $(\ref{eq:pr})$ we have $2(p-1)\leq 2^*$ and
then, by the Sobolev Embedding Theorem,
\begin{equation}
\|u(t)\|_{2(p-1)}\leq K_{0}\left\|u(t)\right\|_{H^{1}_{\Gamma_{0}}},
    \quad \forall t\in [0,T],\label{zidane}
\end{equation}
for some $K_0=K_0(\Omega)>0$ (in the sequel of the proof $K_i$,
$i\in\N$, will denote suitable positive constants depending on $p$,
$n$ and $\Omega$). Hence $\left|u\right|^{p-2}u\in
L^{\infty}((0,T);L^{2}(\Omega))$. Then by Theorem \ref{eq:T1} there
is a unique weak solution $v$ of the problem
\begin{equation}
\begin{cases}
v_{t}-\Delta v=\left|u\right|^{p-2}u & \mbox{in } (0,T)\times\Omega,\\
v=0 & \mbox{on } \left[0,T\right)\times\Gamma_{0},\\
\frac{\partial v}{\partial\nu} = -\left|v_{t}\right|^{m-2}v_{t} & \mbox{on } \left[0,T\right)\times\Gamma_{1},\\
v(0,x)=u_{0}(x) & \mbox{in } \Omega.
\end{cases}
\label{eq:P4}
\end{equation}
Moreover
    $v\in C(\left[0,T\right];H^{1}_{\Gamma_{0}}(\Omega))
, v_{t}\in L^{m}((0,T)\times\Gamma_{1})\cap
L^{2}((0,T)\times\Omega)$
 and the energy identity
    \begin{equation}\label{stellarossabelgrado}
    \frac{1}{2}\left\|\nabla v\right\|^{2}_{2}\Big| ^{t}_{0}+\int^{t}_{0}\left(\left\|v_{t}\right\|^{m}_{m,\Gamma_{1}}+\left\|v_{t}\right\|^{2}_{2}\right)=\int^{t}_{0}\int_{\Omega}\left|u\right|^{p-2}uv_{t}
\end{equation}
holds for all $t\in \left[0,T\right]$. We define $\Phi :
X_{T}\rightarrow X_{T}$ by $\Phi(u)=v$, where $v$ denotes the
solution of $(\ref{eq:P4})$ that corresponds to $u$. We are going to
prove that we can apply the Banach Contraction Theorem to $\Phi :
B_{R}\rightarrow B_{R}$ where $B_{R}=\left\{u\in X_{T}:
\left\|u\right\|_{Y_{T}}\leq R\right\}$, provided that $R$ is
sufficiently large and $T$ is sufficiently small. Note that $B_R$ is
non--empty for
\begin{equation}
R\geq R_{0}:=\left\|u_{0}\right\|_{H^{1}_{\Gamma_{0}}}.
\label{eq:r0}
\end{equation}

We first claim that $\Phi$ maps $B_{R}$ into itself for $R$
sufficiently large and $T$ small enough. Let $u\in B_{R}$. By
\eqref{stellarossabelgrado} and \eqref{zidane} we get, for
$t\in\left[0,T\right]$,
\begin{equation*}
\frac{1}{2}\left\|\nabla v(t)\right\|^{2}_{2}+
\int^{t}_{0}\left\|v_{t}\right\|^{2}_{2} \leq
\frac{1}{2}\left\|\nabla u_{0}\right\|^{2}_{2}+ K_0
^{2(p-1)}\int^{t}_{0}\left\|u\right\|^{p-1}_{H^1_{\Gamma_0}}\left\|v_{t}\right\|_{2}.
\end{equation*}
Now using Young inequality it follows that, for all
$t\in\left[0,T\right]$,
\begin{equation*}
\begin{aligned}
\frac 12\left\|\nabla
v(t)\right\|^{2}_{2}+\int^{t}_{0}\left\|v_{t}\right\|^{2}_{2}
\leq & \frac{1}{2}\left\|\nabla u_{0}\right\|^{2}_{2}+ K_{1}R^{p-1}\int^{t}_{0}\left\|v_{t}\right\|_{2}\\
\leq & \frac{1}{2}\left\|\nabla
u_{0}\right\|^{2}_{2}+\frac{1}{2}K_{1}^{2}R^{2(p-1)}
T+\frac{1}{2}\int^{t}_{0}\left\|v_{t}\right\|^{2}_{2}.
\end{aligned}
\end{equation*}
Hence
\begin{equation}\label{Coco52} \frac{1}{2}\left\|\nabla
v(t)\right\|^{2}_{2}+\frac{1}{2}\int^{t}_{0}\left\|v_{t}\right\|^{2}_{2}
\leq  \frac{1}{2}\left\|\nabla u_{0}\right\|^{2}_{2}+K_{2}R^{2(p-1)}
T.
\end{equation}
Consequently, by \eqref{eq:r0},
\begin{align}
&\left\|\nabla v\right\|^{2}_{L^{\infty}(0,T;L^{2}(\Omega))}  \leq
 R^{2}_{0}+2K_{2}R^{2(p-1)}T \label{eq:3.n}\\
\intertext{and} &\int^{T}_{0}\left\|v_{t}\right\|^{2}_{2}  \leq
R^{2}_{0}+2K_{2}R^{2(p-1)}T. \label{eq:3.1}
\end{align}
Using H\"{o}lder inequality we have $\left\|v(t)\right\|_{2}  =
\left\|u_{0}+\int^{t}_{0}v_{t}(s)ds\right\|_{2}
 \leq
\left\|u_{0}\right\|_{2}+T^{\frac{1}{2}}\left(\int^{t}_{0}\left\|v_{t}\right\|^{2}_{2}\right)^{\frac{1}{2}}
$
and so, by \eqref{eq:3.1},
\begin{equation}
\left\|v(t)\right\|^{2}_{2}  \leq  2\left\|u_{0}\right\|^{2}_{2}+ 2
T\left(\int^{t}_{0}\left\|v_{t}\right\|^{2}_{2}\right) \leq
2(1+T)R^{2}_{0}+4K_{2}R^{2(p-1)}T^{2}.\label{eq:3.2}
\end{equation}
Now restricting to $T\leq 1$ we have $T^{2}\leq T$ and so combining $(\ref{eq:3.n})$ and $(\ref{eq:3.2})$ we get
\begin{equation}
\left\|v\right\|^{2}_{Y_{T}}  \leq (3+2T)R^{2}_{0}+6K_{2}R^{2(p-1)}T
 \leq  5R^{2}_{0}+6K_{2}R^{2(p-1)}T.\label{eq:lastr}
\end{equation}
By $(\ref{eq:lastr})$ in order to prove that $v\in B_{R}$, it is
enough to show that $5R^{2}_{0}\leq \frac{1}{2}R^{2}$ and
$6K_{2}R^{2(p-1)}T\leq\frac{1}{2}R^{2}$. Hence our claim holds for
\begin{equation}
R=4R_{0}\quad\text{and}\quad T \leq
\min\left\{1,K_{3}R^{2(2-p)}_{0}\right\}. \label{eq:T}
\end{equation}
In the sequel we shall assume that (\ref{eq:T}) holds.

We now claim that, for $T$ small enough, the map $\Phi$ is a
contraction. Let $u,\bar{u}\in B_{R}$, and denote $v=\Phi(u)$,
$\bar{v}=\Phi(\bar{u})$, $w=v-\bar{v}$. Clearly, $w$ is a weak
solution (in the sense of Lemma~\ref{eq:l1}) of the problem
\begin{equation}
\mbox{
$\left\{\begin{array}{llll}
$$w_{t}-\Delta w=\left|u\right|^{p-2}u-\left|\bar{u}\right|^{p-2}\bar{u} & \mbox{in } (0,T)\times\Omega,$$\\
$$w=0 & \mbox{on } \left[0,T\right)\times\Gamma_{0},$$\\
$$\frac{\partial w}{\partial\nu} = -\left|v_{t}\right|^{m-2}v_{t}+\left|\bar{v}_{t}\right|^{m-2}\bar{v}_{t}
& \mbox{on } \left[0,T\right)\times\Gamma_{1},$$\\
$$w(0,x)=0 & \mbox{in } \Omega.$$
\end{array}
\right.$}\label{eq:P6}
\end{equation}
Since $v_{t},\bar{v}_{t}\in L^{m}((0,T)\times\Gamma_{1})$, we also
know that $\left|v_{t}\right|^{m-2}v_{t}$ and
$\left|\bar{v}_{t}\right|^{m-2}\bar{v}_{t}$ belong to
$L^{m'}((0,T)\times\Gamma_{1})$. Moreover, by $(\ref{eq:pr})$, the
functions $\left|u\right|^{p-2}u$ and
$\left|\bar{u}\right|^{p-2}\bar{u}$ belong to
$L^{2}((0,T)\times\Omega)$. Then we can apply Lemma \ref{eq:l1} so
that, for $t\in [0,T]$,
\begin{multline}
\frac{1}{2}\left\|\nabla w(t)\right\|^{2}_{2}+
\int^{t}_{0}\left\|w_{t}\right\|^{2}_{2}+
\int^{t}_{0}\int_{\Gamma_{1}}\left[\left|v_{t}\right|^{m-2}v_{t}-\left|\bar{v}_{t}\right|^{m-2}\bar{v}_{t}\right]w_t
\\
=\int^{t}_{0}\int_{\Omega}\left[\left|u\right|^{p-2}u-\left|\bar{u}\right|^{p-2}\bar{u}\right]w_{t}.
\end{multline}

Using the monotonicity of the map $x \rightarrow
\left|x\right|^{m-2}x$ and the elementary inequality
    \begin{equation}\left|\left|A\right|^{p-2}A - \left|B\right|^{p-2}B\right|\leq K_{4}\left|A -
     B\right|\left(\left|A\right|^{p-2} +
     \left|B\right|^{p-2}\right),\label{**ester}
\end{equation}
for $A,B\in\mathbb{R}$, $p\geq2$, we get
\begin{equation}\label{spinoza}
\frac{1}{2}\left\|\nabla w(t)\right\|^{2}_{2}+
\int^{t}_{0}\left\|w_{t}\right\|^{2}_{2}\leq
K_{4}\int^{t}_{0}\int_{\Omega}
\left(\left|u\right|^{p-2}+\left|\bar{u}\right|^{p-2}\right)
|u-\bar{u}|\,|w_t|.
\end{equation}
We now set $r=2^*$ if $n\in\N$, $n\not=2$, while $r=2p$ when $n=2$,
so that $2\le p\le 1+r/2\le 1+2^*/2$ and $r>2$. We also fix $s
> 2$ such that $\frac{1}{s}+\frac{1}{r}+\frac{1}{2}=1$, that is
$s=\frac{2r}{r-2}$. By applying triple H\"{o}lder inequality and the
elementary inequality
\begin{equation}
\left(A +
B\right)^{\tau}\leq\max\left\{1,2^{\tau-1}\right\}\left(A^{\tau} +
B^{\tau}\right)\quad \mbox{for  } A,B\geq 0,\quad \tau\geq 0,
\label{eq:3.4}
\end{equation}
from \eqref{spinoza} we get
\begin{equation*}
\frac{1}{2}\left\|\nabla w(t)\right\|^{2}_{2}+
\int^{t}_{0}\!\!\left\|w_{t}\right\|^{2}_{2}  \leq
K_{5}\int^{t}_{0}\left(\int_{\Omega}\left(\left|u\right|^{s(p-2)}+\left|\bar{u}\right|^{s(p-2)}\right)\right)^{\frac{1}{s}}
\left\|u-\bar{u}\right\|_{r}\left\|w_{t}\right\|_{2}.
\end{equation*}
But $s(p-2)\leq r$ since $p\leq 1+\frac{r}{2}$, so by the Sobolev
Embedding Theorem and weighted Young inequality we obtain, for any
$\varepsilon > 0$,
\begin{equation}
\begin{aligned}
\frac{1}{2}\left\|\nabla w(t)\right\|^{2}_{2}+
\int^{t}_{0}\left\|w_{t}\right\|^{2}_{2}  \leq &
K_{6}\int^{t}_{0}\left[\left\|u\right\|^{s(p-2)}_{r}+
\left\|\bar{u}\right\|^{s(p-2)}_{r}\right]^{\frac{1}{s}}
\left\|u-\bar{u}\right\|_{r}\left\|w_{t}\right\|_{2}\\
 \leq &  K_{7}R^{p-2}\int^{t}_{0}
\left\|u-\bar{u}\right\|_{r}\left\|w_{t}\right\|_{2}\\
 \leq &  K_{8}R^{p-2}\int^{t}_{0}
\left\|u-\bar{u}\right\|_{H^{1}_{\Gamma_{0}}}\left\|w_{t}\right\|_{2}\\
\leq &  K_{8}R^{p-2}\left[\frac{1}{2\varepsilon}\int^{t}_{0}
\left\|u-\bar{u}\right\|^{2}_{H^{1}_{\Gamma_{0}}}+\frac{\varepsilon}{2}\int^{t}_{0}\left\|w_{t}\right\|^{2}_{2}\right]
 \end{aligned}\label{crunccrunc}
\end{equation}
and consequently
$$\frac{1}{2}\left\|\nabla w(t)\right\|^{2}_{2}+
\int^{t}_{0}\left\|w_{t}\right\|^{2}_{2} \leq
\frac{K_{9}R^{p-2}}{\varepsilon}T
\left\|u-\bar{u}\right\|^{2}_{L^{\infty}(0,T;H^{1}_{\Gamma_{0}}(\Omega))}+
K_{9}R^{p-2}\varepsilon\int^{t}_{0}\left\|w_{t}\right\|^{2}_{2}.$$
Now we choose $\varepsilon=1/(2K_{9}R^{p-2})$ so  previous estimate
reads as
\begin{equation}
\frac{1}{2}\left\|\nabla w(t)\right\|^{2}_{2}+
\frac{1}{2}\int^{t}_{0}\left\|w_{t}\right\|^{2}_{2}\leq
2K^{2}_{9}R^{2(p-2)}T\left\|u-
\bar{u}\right\|^{2}_{L^{\infty}(0,T;H^{1}_{\Gamma_{0}}(\Omega))},
\label{eq:f4.12}
\end{equation}
and consequently
\begin{align} \left\|\nabla w(t)\right\|_{2}\leq
&K_{10}R^{p-2}\sqrt{T}\left\|u-\bar{u}\right\|_{L^{\infty}(0,T;H^{1}_{\Gamma_{0}}(\Omega))}\quad\text{for
all $t\in [0,T]$} \label{eq:c1}\\ \intertext{and}
\int^{T}_{0}\left\|w_{t}\right\|^{2}_{2}\leq &
K_{11}R^{2(p-2)}T\left\|u-\bar{u}\right\|^{2}_{L^{\infty}(0,T;H^{1}_{\Gamma_{0}}(\Omega))}.
\label{eq:c3}
\end{align}
Then, since $w(0)=0$, by H\"{o}lder inequality and \eqref{eq:c3}
\begin{equation}
\left\|w(t)\right\|_{2}  \le
\int^{t}_{0}\!\!\left\|w_{t}\right\|_{2}
 \leq  T^{\frac{1}{2}}\left(\int^{t}_{0}\!\!\left\|w_{t}\right\|^{2}_{2}\right)^{\frac{1}{2}}
 \leq  T
K_{12}R^{p-2}\left\|u-\bar{u}\right\|_{L^{\infty}(0,T;H^{1}_{\Gamma_{0}}(\Omega))}.
 \label{eq:c2}
\end{equation}
By combining (\ref{eq:c1}) and (\ref{eq:c2}) we consequently get (as
$T\leq 1$)
\begin{equation}
\left\|w(t)\right\|^{2}_{H^{1}_{\Gamma_{0}}}  \leq
K^{2}_{13}R^{2(p-2)}T\left\|u-\bar{u}\right\|^{2}_{L^{\infty}(0,T;H^{1}_{\Gamma_{0}}(\Omega))}.
\label{3crunc}\end{equation}
 Then $\Phi$ is a contraction provided $K_{13}R^{p-2}\sqrt T<1$,
that is, by \eqref{eq:T},  provided
\begin{equation}T <K_{13}^{-2}(4
R_0)^{2(2-p)}. \label{4crunc}\end{equation} We can finally choose
$T^{*}=\min\left\{1,K_{3}R^{2(2-p)}_{0},\frac{1}{2}K^{-2}_{13}(4R_{0})^{2(2-p)}\right\}$
which is decreasing in $R_{0}$. So, by applying Banach Contraction
Theorem with $T=T^*$, there is a weak solution of \eqref{Pmod} on
$[0,T^*]\times \Omega$ satisfying \eqref{eq:4.12}--\eqref{eq:ei2}.
Moreover \eqref{eq:gengis} follows by \eqref{eq:T}.

In order to prove that the solution is unique we use a standard
procedure of ODEs, using previous claims, which is briefly outlined
as follows. Let $u$, $\widetilde{u}$ be two weak solutions of
$(\ref{Pmod})$ on $[0,T^{*}]\times\Omega$. By Lemma \ref{eq:l1} we
have $u, \widetilde{u}\in
C(\left[0,T^{*}\right];H^{1}_{\Gamma_{0}}(\Omega))$. Suppose by
contradiction that $u\neq\widetilde{u}$. Then
\begin{equation}
T'=\sup\left\{\tau>0:u=\widetilde{u}\mbox{ on
}[0,\tau]\right\}<T^{*}\quad\text{and $u(T')=\widetilde{u}(T')$ by
continuity.} \label{eq:T'}
\end{equation}
Setting $u_{1}(t)=u(t+T')$,
$\widetilde{u}_{1}(t)=\widetilde{u}(t+T')$ we have $u_1,
\widetilde{u}_1\in
C(\left[0,T^{*}-T'\right];H^{1}_{\Gamma_{0}}(\Omega))$ and
$\widetilde{u}_{0}:=u_{1}(0)=\widetilde{u}_{1}(0)$. Then $u_{1}$,
$\widetilde{u}_{1}$ are weak solutions of $(\ref{Pmod})$ with
initial datum $\widetilde{u}_{0}$. By continuity there is $0<T''\leq
T^{*}-T'$ such that
$$\max\{\left\|u_{1}\right\|_{C(\left[0,T''\right];H^{1}_{\Gamma_{0}}(\Omega))},
\left\|\widetilde{u}_{1}\right\|_{C(\left[0,T''\right];H^{1}_{\Gamma_{0}}(\Omega))}\}\leq
4\left\|\widetilde{u}_{0}\right\|_{H^{1}_{\Gamma_{0}}}.$$ Hence
$u_1$ and $\widetilde{u}_{1}$ are fixed points for $\Phi$ in
$B_{4\|\widetilde{u}_0\|_{H^1_{\Gamma_0}}}$ when $T=T''$, so by
previous claim  $u_{1}=\widetilde{u}_{1}$ on $[0,T]$, contradicting
(\ref{eq:T'}).
\end{proof}
\begin{proof}[\bf Proof of Theorem \ref{Thalt}]
The existence of the unique maximal solution $u$ of \eqref{Pmod}
follows by Theorem~\ref{Thloc} in a standard way: first one  sets
$\cal{U}$ to be the set of all tweak solutions of \eqref{Pmod}, then
one proves that any two elements of $\cal{U}$ must coincide on the
intersection of their domains,  arguing as at the end of previous
proof, finally one defines $u(t)$ to coincide with any of these
solution for $t$ in the union of the domains.

Next, in order to prove that the alternative (i)--(ii) holds, let us
suppose, by contradiction, that
\begin{equation}\label{alternative} T_{\max} < \infty \quad
\mbox{and  } \varlimsup_{t\rightarrow
T^{-}_{\max}}\left\|u(t)\right\|_{H^{1}_{\Gamma_{0}}} <  \infty.
\end{equation}
 Then there is a sequence $T_{n}\rightarrow
T^{-}_{\max}$ such that $\left\|u(T_n)\right\|_{H^{1}_{\Gamma_{0}}}$
is bounded. Thus, by Theorem $\ref{Thloc}$, the Cauchy problem
$(\ref{Pmod})$ with initial time $T_{n}$ and initial datum $u(T_n)$
as a unique weak solution in $[T_n,T_n+T']$,  where
$T'=T^*(\sup_{n\in\N}\|u(T_n)\|_{H^1_{\Gamma_0}},
m,p,\Omega,\Gamma_1)$ is independent on $n$. This leads to a
contradiction, since, in this way, we can continue the solution to
the right of $T_{\max}$.

Now, in order to prove that $u$ depends continuously on the initial
datum, we fix $T\in (0,T_{\max})$ and we denote
$M=\left\|u\right\|_{C(\left[0,T\right];H^{1}_{\Gamma_{0}}(\Omega))}$.
Since $u_{0n}\rightarrow u_{0}$ in $H^{1}_{\Gamma_{0}}(\Omega)$
there is $n_{1}\in\N$ such that
    $\left\|u_{0n}\right\|_{H^{1}_{\Gamma_{0}}}\leq \left\|u_{0}\right\|_{H^{1}_{\Gamma_{0}}}+1 \leq M+1$.
Then, by Theorem \ref{Thloc}, problem $(\ref{Pmod})$ with initial
datum $u_{0n}$ has an unique solution $u^{n}$ in
$[0,T^{*}]\times\Omega$, with $T^{*}=T^{*}(M+1, m, p, \Omega,
\Gamma_{1})\in(0,1]$ and
\begin{equation}
\left\|u^{n}\right\|_{C(\left[0,T^{*}\right];H^{1}_{\Gamma_{0}}(\Omega))}\leq
4\left\|u_{0n}\right\|_{H^{1}_{\Gamma_{0}}} \leq 4(M+1)
\label{eq:M+1}
\end{equation}
for all $n\in\mathbb{N}$. Now we define $w^n=u^n-u$, which is a weak
solution of the problem
\begin{equation*}
\mbox{
$\left\{\begin{array}{llll}
$$w^n_{t}-\Delta w^n=\left|u^n\right|^{p-2}u^n-\left|u\right|^{p-2}u & \mbox{in } (0,T^{*})\times\Omega,$$\\
$$w^n=0 & \mbox{on } \left[0,T^{*}\right)\times\Gamma_{0},$$\\
$$\frac{\partial w^n}{\partial\nu} = -\left|u^n_{t}\right|^{m-2}u^n_{t}+\left|u_{t}\right|^{m-2}u_{t} & \mbox{on } \left[0,T^{*}\right)\times\Gamma_{1},$$\\
$$w^n(0)=u_{0n}-u_{0} & \mbox{in } \Omega$$
\end{array}
\right.$}
\end{equation*}
in the sense of Lemma~\ref{eq:l1}. Consequently
\begin{multline}
\frac{1}{2}\left\|\nabla w^n\right\|^{2}_{2} \Big |
^{t}_{0}+\int^{t}_{0}\left\|w^n_t\right\|^{2}_{2}
+\int^{t}_{0}\int_{\Gamma_1}\left [
|u_t^n|^{m-2}u_t^n-|u_t|^{m-2}u_t\right
]w^n_t \\
=\int^{t}_{0}\int_{\Omega}\left[\left|u^{n}\right|^{p-2}u^{n}-\left|u\right|^{p-2}u\right]w^n_{t}.
\label{eq:rhs}
\end{multline}
Then, keeping the notation of  the proof of Theorem \ref{Thloc} and
using the arguments already used to prove \eqref{crunccrunc}
together with \eqref{eq:M+1} we get the estimate
\begin{multline}\label{fascio}
\frac{1}{2}\left\|\nabla w^n(t)\right\|^{2}_{2} +
\int^{t}_{0}\|w^n_t\|^{2}_{2} \\\leq
4^{p-2}(M+1)^{p-2}K_8\left[\frac{1}{2\varepsilon}\int^{t}_{0}\|w^n\|^{2}_{H^{1}_{\Gamma_{0}}}+\frac{\varepsilon}{2}
\|w^n_t\|^{2}_{2}\right]+
\frac{1}{2}\left\|\nabla(u_{0n}-u_{0})\right\|^{2}_{2}
\end{multline}
for any $\varepsilon
> 0$.  Consequently, for $\varepsilon > 0$ sufficiently  small we have
\begin{equation}
\frac{1}{2}\left\|\nabla w^n(t)\right\|^{2}_{2}+
\frac{1}{2}\int^{t}_{0}\|w^n_t\|^{2}_{2}\leq C_3
\int^{t}_{0}\left\|w^n\right\|^{2}_{H^{1}_{\Gamma_{0}}}+\frac{1}{2}\left\|\nabla(u_{0n}-u_{0})\right\|^{2}_{2}
\label{eq:3.5}
\end{equation}
where $C_3=C_3(p,n,\Omega,u_0,T)>0$. Moreover, since $T^*\le 1$, by
using H\"{o}lder inequality we get
$\left\|w^n(t)\right\|_{2}\leq\left\|u_{0n}-u_{0}\right\|_{2}+\left(\int^{t}_{0}\|w^n_t\|^2_{2}\right)^{1/2}$
and so by \eqref{eq:3.5}
\begin{equation}
\left\|w^n(t)\right\|^{2}_{2} \leq
2\left\|u_{0n}-u_{0}\right\|^{2}_{2}+2\int^{t}_{0}\|w^n_t\|^{2}_{2}
\leq
2\left\|u_{0n}-u_{0}\right\|^{2}_{2}+4C_3\int^{t}_{0}\|w^{n}\|^{2}_{H^{1}_{\Gamma_{0}}}.
\label{eq:wn}
\end{equation}
Combining $(\ref{eq:3.5})$ and $(\ref{eq:wn})$ we get
\begin{equation}
\left\|w^n(t)\right\|^{2}_{H^{1}_{\Gamma_{0}}}\leq
2\left\|u_{0n}-u_{0}\right\|^{2}_{H^{1}_{\Gamma_{0}}}+C_4\int^{t}_{0}\|w^n\|^{2}_{H^{1}_{\Gamma_{0}}}
\label{eq:Gron}
\end{equation}
where $C_4=C_4(p,n,\Omega,u_0,T)>0$. By Gronwall inequality  the
estimate
\begin{equation}
\left\|w^n(t)\right\|_{H^{1}_{\Gamma_{0}}}\leq\sqrt{2}\left\|u_{0n}-u_{0}\right\|_{H^{1}_{\Gamma_{0}}}e^{\frac{C_4}{2}t},\quad\text{for
all $t\in [0,T^*]$,}\label{kjh}
\end{equation}
follows. In particular we have
\begin{equation}
\left\|w^n(T^{*})\right\|_{H^{1}_{\Gamma_{0}}}\leq\sqrt{2}\left\|u_{0n}-u_{0}\right\|_{H^{1}_{\Gamma_{0}}}e^{\frac{C_4}{2}T^{*}}.
\label{eq:particular}
\end{equation}
Then, since $u_{0n}\to u_0$ as $n\to\infty$,  for $n\geq n_{2}$,
with $n_{2}$ sufficiently large, we have
$\left\|u^n(T^{*})\right\|_{H^{1}_{\Gamma_{0}}}\leq\left\|u(T^{*})\right\|_{H^{1}_{\Gamma_{0}}}+1\leq
M+1$. Hence we get that $u^n$ is defined in $[T^*,2T^*]$. Moreover,
by repeating previous argument for $t\in \left[T^{*},2T^{*}\right]$
and using \eqref{eq:particular}, we get
$\left\|w^n(t)\right\|_{H^{1}_{\Gamma_{0}}}  \leq
\sqrt{2}\left\|w^n(T^{*})\right\|_{H^{1}_{\Gamma_{0}}}e^{\frac{C_4}{2}(t-T^{*})}
 \leq
2\left\|u_{0n}-u_{0}\right\|_{H^{1}_{\Gamma_{0}}}e^{\frac{K_{15}}{2}t}$.
After a finite number $k=\left[ \frac{T}{T^{*}}\right]$ of
iterations we get that for $n$ large enough  $u^n$ is defined in
$\left[0,T\right]$ and
$\left\|u^n(t)-u(t)\right\|_{H^{1}_{\Gamma_{0}}}\leq2^{\frac{k}{2}}\left\|u_{0n}-u_{0}\right\|_{H^{1}_{\Gamma_{0}}}
e^{\frac{C_4}{2}t}$ for $t\in\left[0,T\right]$, concluding the
proof.
\end{proof}

\section{Proofs of
Theorems~\ref{Thglo}~\and\ref{Thblow}.}\label{section4}

When $\sigma(\Gamma_{0}) > 0$ a Poincar\`e type inequality holds
(see \cite[Corollary~4.5.3]{ziemer}) and we can take $\left\|\nabla
u\right\|_{2}$ as an equivalent norm in
$H^{1}_{\Gamma_{0}}(\Omega)$. Then using the Sobolev's Embedding
Theorem, since $p\leq 1+2^*/2\leq 2^*$, we have
\begin{equation}
B_{1}:=\sup_{u\in H^{1}_{\Gamma_{0}}(\Omega), u\neq
0}\frac{\left\|u\right\|_{p}}{\left\|\nabla u\right\|_{2}} <
+\infty. \label{eq:B1}
\end{equation}
We denote, when $2<p\leq 1+2^*/2$,
\begin{align}\label{lambdais}
    & \lambda_{1}=B^{-\frac{p}{p-2}}_{1},\qquad
    \widetilde{\lambda_1}=B_1^{-\frac{2}{p-2}}, \qquad
    E_{1}=\left(\frac{1}{2}-\frac{1}{p}\right)\lambda^{2}_{1},\\
    & W_{1}=\left\{u_{0}\in H^{1}_{\Gamma_{0}}\left(\Omega\right): J\left(u_{0}\right)<E_{1}\quad{and}\quad\left\|\nabla u_{0}\right\|_{2}<
    \lambda_{1}\right\},\label{W1}\\
    &\widetilde{W_{1}}=\left\{u_{0}\in H^{1}_{\Gamma_{0}}\left(\Omega\right): J\left(u_{0}\right)<E_{1}\quad{and}\quad\left\|u_{0}\right\|_p<
    \widetilde{\lambda_{1}}\right\},\label{W1tilde}\\
    &W_{2}=\left\{u_{0}\in H^{1}_{\Gamma_{0}}\left(\Omega\right):
    J\left(u_{0}\right)<E_{1}\quad{and}\quad\left\|\nabla u_{0}\right\|_{2}>
    \lambda_{1}\right\},\label{W2}\\
    \intertext{and}
    &\widetilde{W_{2}}=\left\{u_{0}\in H^{1}_{\Gamma_{0}}\left(\Omega\right):
    J\left(u_{0}\right)<E_{1}\quad{and}\quad\left\|u_{0}\right\|_p>
    \widetilde{\lambda_{1}}\right\}.\label{W2tilde}
    \end{align}

\noindent At first we give the following useful characterization of
$d$, $W_s$ and $W_u$.
\begin{lem} \label{eq:Ld}
Suppose $2<p\leq 1+2^*/2$, $\sigma(\Gamma_{0})>0$ and let $d$,
$W_{s}$ and  $W_{u}$ be respectively defined by \eqref{eq:d},
\eqref{eq:Ws} and \eqref{eq:Wu}. Then $E_{1}=d$,
$W_s=W_1=\widetilde{W_1}$ and $W_u=W_2=\widetilde{W_2}$.
\end{lem}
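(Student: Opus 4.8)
The plan is to compute the well depth $d$ explicitly by maximizing $J$ along half--lines through the origin, and then to read off the stable and unstable sets from the resulting scalar inequalities, using only the best constant $B_1$ of \eqref{eq:B1} and the explicit expressions \eqref{eq:K} for $J$ and $K$. Throughout, assumption \eqref{eq:sig} is what lets us treat $\|\nabla\cdot\|_2$ as a norm on $H^1_{\Gamma_0}(\Omega)$, so that $\|\nabla u\|_2=0$ forces $u=0$.

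\textbf{Computing $d$.} For $u\neq0$, writing $a=\|\nabla u\|_2^2>0$ and $b=\|u\|_p^p>0$, the function $\lambda\mapsto J(\lambda u)=\frac{\lambda^2}{2}a-\frac{\lambda^p}{p}b$ has derivative $\lambda^{-1}K(\lambda u)=\lambda a-\lambda^{p-1}b$; since $p>2$ this has a unique positive zero $\bar\lambda=(a/b)^{1/(p-2)}$, which is a maximum since $J(\lambda u)\to-\infty$ as $\lambda\to+\infty$. A direct computation gives
\begin{equation*}
\sup_{\lambda>0}J(\lambda u)=\left(\frac12-\frac1p\right)\left(\frac{\|\nabla u\|_2^2}{\|u\|_p^2}\right)^{\frac{p}{p-2}} .
\end{equation*}
Taking the infimum over $u\neq0$ and using \eqref{eq:B1} together with the monotonicity of $t\mapsto t^{p/(p-2)}$, one obtains $d=\left(\frac12-\frac1p\right)B_1^{-2p/(p-2)}$, which by \eqref{lambdais} equals $E_1$; since $0<B_1<\infty$ and $p>2$ this also re-proves $d=E_1>0$.

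\textbf{Three elementary facts.} For $u\in H^1_{\Gamma_0}(\Omega)$ one checks: (a) if $K(u)\le0$ and $u\neq0$, then $\|\nabla u\|_2^2\le\|u\|_p^p\le B_1^p\|\nabla u\|_2^p$ gives $\|\nabla u\|_2\ge\lambda_1$, and $B_1^{-2}\|u\|_p^2\le\|\nabla u\|_2^2\le\|u\|_p^p$ gives $\|u\|_p\ge\widetilde{\lambda_1}$, both strictly when $K(u)<0$; (b) if $K(u)\ge0$, then $\|u\|_p^p\le\|\nabla u\|_2^2$ yields both $J(u)\ge\left(\frac12-\frac1p\right)\|\nabla u\|_2^2$ and $J(u)\ge\left(\frac12-\frac1p\right)\|u\|_p^p$, so $J(u)<d=E_1$ forces $\|\nabla u\|_2<\lambda_1$ and $\|u\|_p<\widetilde{\lambda_1}$; (c) if $K(u)=0$ and $u\neq0$, then $\bar\lambda=1$ in the previous step, hence $J(u)=\sup_{\lambda>0}J(\lambda u)\ge d$. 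In particular, on the slice $\{J<d\}$ the only $u$ with $K(u)=0$ is $u=0$, and $K(u)\le0$ with $u\neq0$ is equivalent to $K(u)<0$.

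\textbf{Assembling and main obstacle.} The identities then follow by pure logic. If $u_0\in W_s$, fact (b) puts $u_0$ in $W_1\cap\widetilde{W_1}$; conversely, if $u_0\in W_1$ (resp. $\widetilde{W_1}$) and $K(u_0)<0$, then (a) contradicts $\|\nabla u_0\|_2<\lambda_1$ (resp. $\|u_0\|_p<\widetilde{\lambda_1}$), so $K(u_0)\ge0$ and $u_0\in W_s$; hence $W_s=W_1=\widetilde{W_1}$. For the unstable sets, a nonzero $u_0\in W_u$ has $K(u_0)<0$ by (c), so (a) gives $\|\nabla u_0\|_2>\lambda_1$ and $\|u_0\|_p>\widetilde{\lambda_1}$, i.e. $u_0\in W_2\cap\widetilde{W_2}$; conversely, $u_0\in W_2$ (resp. $\widetilde{W_2}$) is nonzero and cannot have $K(u_0)\ge0$ (else (b) would violate $\|\nabla u_0\|_2>\lambda_1$, resp. $\|u_0\|_p>\widetilde{\lambda_1}$), so $u_0\in W_u$; thus $W_u=W_2=\widetilde{W_2}$ (the trivial function, an isolated borderline member of $W_u$ for which every quantity vanishes, being discarded consistently with the Remark after Theorem~\ref{Thblow}). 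There is no serious difficulty in any of this: the content is the explicit maximization in the second step together with careful bookkeeping of strict versus non-strict inequalities. The only genuinely delicate point is fact (c), which is what keeps the level $K=0$ from being reached inside $\{J<d\}$ and thereby makes $W_s$ and $W_u$ complementary and the three descriptions of each set coincide.
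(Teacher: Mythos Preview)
Your proof is correct and follows essentially the same approach as the paper: both compute $d$ by maximizing $J(\lambda u)$ explicitly along rays and then exploit the best constant $B_1$ to translate between the conditions defining $W_s,W_u,W_1,W_2,\widetilde{W_1},\widetilde{W_2}$. The paper organizes the argument as two cycles of inclusions ($W_s\subseteq W_1\subseteq\widetilde{W_1}\subseteq W_s$ and $W_2\subseteq W_u$, $\widetilde{W_2}\subseteq W_2$, $W_u\subseteq\widetilde{W_2}$), while you package the same elementary inequalities into three facts (a)--(c) and assemble afterward; the underlying computations are identical. Your treatment of the borderline case $u_0=0$ is in fact more careful than the paper's: the paper's inclusion $W_u\subseteq\widetilde{W_2}$ tacitly divides by $\|u_0\|_p$, and strictly speaking $0\in W_u\setminus W_2$, so the identity $W_u=W_2$ holds only modulo this trivial element, exactly as you flag.
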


\begin{proof}
An easy calculation shows that for any $u\in
H^{1}_{\Gamma_{0}}(\Omega)\setminus\{0\}$  we have
    $\max\limits_{\lambda > 0}J(\lambda u)=J(\lambda(u)u)=\left(\frac{1}{2}-\frac{1}{p}\right)\left(\frac{\left\|\nabla u\right\|_{2}}{\left\|u\right\|_{p}}
    \right)^{2p/(p-2)}$
where
    $\lambda(u)=\frac{\left\|\nabla u\right\|^{2/(p-2)}_{2}}{\left\|u\right\|^{p/(p-2)}_{p}}$.
Hence, by \eqref{eq:B1}, $d=E_{1}$. In order to show that
$W_s=W_1=\widetilde{W_1}$ we first prove that $W_{s}\subseteq
W_{1}$. Let $u_{0}\in W_{s}$ and suppose, by contradiction, that
$\left\|\nabla u_{0}\right\| \geq \lambda_{1}$. Since $J(u_{0}) < d
=E_{1}$ and $\left\|u_{0}\right\|^{p}_{p}\leq \left\|\nabla
u_{0}\right\|^{2}_{2}$ it follows that
    \[E_{1}>J(u_0) \geq\left(\frac{1}{2}-\frac{1}{p}\right)\left\|\nabla u_{0}\right\|^{2}_{2} \geq \left(\frac{1}{2}-\frac{1}{p}\right)\lambda^{2}_{1},
\]
which contradicts \eqref{lambdais}. By \eqref{eq:B1}, since
$\widetilde{\lambda_1}=B_1\lambda_1$, one immediately gets that
$W_1\subseteq\widetilde{W_1}$. To prove that
$\widetilde{W_1}\subseteq W_{s}$, let $u_{0}\in \widetilde{W_1}$. By
\eqref{eq:B1}, \eqref{W1tilde} and \eqref{lambdais} we have
    $\left\| u_{0}\right\|^{p}_{p}<\widetilde{\lambda_1}^{p-2}\left\|u_{0}\right\|^2_p
     =B_1^{-2}\left\|u_{0}\right\|^2_p\leq\left\|\nabla u_{0}\right\|^{2}_{2}$
and so $K(u_{0})\geq 0$.

In order to show that $W_u=W_2=\widetilde{W_2}$ we first prove that
$W_2\subseteq W_u$. Let $u_{0}\in W_{2}$ and suppose, by
contradiction, that $K(u_{0})> 0$. So
$\left\|u_{0}\right\|^{p}_{p}<\left\|\nabla u_{0}\right\|^{2}_{2}$
by \eqref{eq:K}. Moreover, $J(u_{0}) < d =E_{1}$ and $\left\|\nabla
u_{0}\right\|_{2}> \lambda_{1}$. Then it follows that
    \[E_{1}>\left(\frac{1}{2}-\frac{1}{p}\right)\left\|\nabla u_{0}\right\|^{2}_{2} >\left(\frac{1}{2}-\frac{1}{p}\right)\lambda^{2}_{1},
\]
which contradicts \eqref{lambdais}. By \eqref{eq:B1} one immediately
gets that $\widetilde{W_2}\subseteq W_2$. To prove that
$W_u\subseteq \widetilde{W_2}$ and conclude the proof, we take
$u_{0}\in W_{u}$. We note that, by \eqref{eq:B1}, we have $J(v)\geq
h(\|v\|_p)$ for all $v\in H^1_{\Gamma_0}(\Omega)$, where $h$ is
defined by $h(\lambda)=\frac 12 B_1^{-2}\lambda^2-\frac 1p
\lambda^p$ for $\lambda\geq 0$.  Moreover one easily verify that
$h(\widetilde{\lambda_1})=E_1$. The, since $J(u_0)<E_1$, we have
$\|u_0\|_p\not=\widetilde{\lambda_1}$. Moreover, since $K(u_0)\le
0$, by \eqref{eq:B1} we have
    $B_1^{-2}\left\|u_{0}\right\|^{2}_p\leq \left\|\nabla u_{0}\right\|^2_2\leq
    \left\|u_{0}\right\|^{p}_p$
and so
    $\left\|u_{0}\right\|_p\geq B^{-p/(p-2)}_{1}=\widetilde{\lambda_1}$,
concluding the proof.
\end{proof}
In what follows we shall use the following derivation formula, which
is proved here for the sake of completeness only.
\begin{lem}\label{auxiliary}
Under the assumptions of Theorem~\ref{Thloc}, let $u$ be a weak
solution of problem \eqref{Pmod} in $[0,T]\times\Omega$. Then
\begin{equation}\label{auxiliaryformula}
\frac d{dt}\|u(t)\|_p^p=p\int_\Omega
|u(t)|^{p-2}u(t)u_t(t)\qquad\text{for almost all $t\in (0,T)$.}
\end{equation}
\end{lem}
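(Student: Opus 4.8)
The plan is to reduce the claimed formula \eqref{auxiliaryformula} to the classical fact that the functional $F(w)=\frac1p\|w\|_p^p$ is continuously differentiable on $L^p(\Omega)$ with derivative $F'(w)=|w|^{p-2}w\in L^{p'}(\Omega)$, combined with a chain rule for the composition $t\mapsto F(u(t))$. First I would record what regularity is available from the definition of weak solution and from Theorem~\ref{Thloc}: we have $u\in C([0,T];H^1_{\Gamma_0}(\Omega))$ and $u_t\in L^2((0,T)\times\Omega)$, and by \eqref{eq:pr} the Sobolev embedding gives $2(p-1)\le 2^*$, so $H^1_{\Gamma_0}(\Omega)\hookrightarrow L^{2(p-1)}(\Omega)$; hence $t\mapsto u(t)$ is continuous into $L^{2(p-1)}(\Omega)$ and $|u(t)|^{p-2}u(t)$ is continuous (in particular bounded) into $L^2(\Omega)$. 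Therefore the right--hand side $\int_\Omega |u|^{p-2}uu_t$ is a well--defined $L^1(0,T)$ function of $t$.

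The key step is to establish absolute continuity of $t\mapsto\|u(t)\|_p^p$ together with the pointwise formula for its derivative. I would first prove it under the extra a priori regularity $u_t\in L^m((0,T)\times\Gamma_1)\cap L^2((0,T)\times\Omega)$ which we have from Theorem~\ref{Thloc}; in fact for this lemma only $u_t\in L^2((0,T)\times\Omega)$ and $u\in C([0,T];H^1_{\Gamma_0})$ matter. Since $u\in H^1(0,T;L^2(\Omega))$ and $u\in L^\infty(0,T;L^{2(p-1)}(\Omega))$, for a.e. $t$ the difference quotients $h^{-1}(u(t+h)-u(t))$ converge to $u_t(t)$ in $L^2(\Omega)$; writing
\begin{equation*}
\frac1h\bigl(\|u(t+h)\|_p^p-\|u(t)\|_p^p\bigr)=\int_\Omega\Bigl(\int_0^1 p\,|u(t)+\theta(u(t+h)-u(t))|^{p-2}(u(t)+\theta(u(t+h)-u(t)))\,d\theta\Bigr)\cdot\frac{u(t+h)-u(t)}{h}
\end{equation*}
one passes to the limit using continuity of $u(\cdot)$ in $L^{2(p-1)}(\Omega)$ (which controls $|\cdot|^{p-2}(\cdot)$ in $L^2$, since $2(p-2)\le 2(p-1)$... more precisely the Nemytskii map $L^{2(p-1)}\to L^{2}$, $w\mapsto|w|^{p-2}w$ is continuous when $2(p-1)\ge 2\cdot 1$, i.e. always for $p\ge2$, after noting $(p-2)\cdot\frac{2(p-1)}{p-2}=2(p-1)$ if $p>2$ and the statement is trivial for $p=2$) and convergence of the difference quotient in $L^2(\Omega)$. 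This gives that $t\mapsto\|u(t)\|_p^p$ has, at a.e. $t$, derivative equal to $p\int_\Omega|u(t)|^{p-2}u(t)u_t(t)$. To upgrade this from an a.e. statement about difference quotients to the genuine derivative formula \eqref{auxiliaryformula}, I would show $t\mapsto\|u(t)\|_p^p$ is absolutely continuous: it equals $F\circ u$ with $F$ Lipschitz on bounded sets of $L^{2(p-1)}(\Omega)$... actually $F(w)=\frac1p\|w\|_p^p$ is locally Lipschitz from $L^{2(p-1)}(\Omega)$ only when $p\le 2(p-1)$, i.e. $p\ge 2$, with Lipschitz constant controlled by $\|w\|_{2(p-1)}^{p-1}$ via H\"older — and $u(\cdot)$ is bounded and uniformly continuous from $[0,T]$ into $L^{2(p-1)}(\Omega)$, hence $t\mapsto\|u(t)\|_p^p$ is absolutely continuous on $[0,T]$, so its a.e. derivative is its distributional derivative, yielding \eqref{auxiliaryformula}.

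The main obstacle is the dominated-convergence argument in the difference-quotient passage: one must control $|u(t)+\theta(u(t+h)-u(t))|^{p-2}$ uniformly in $\theta\in[0,1]$ and in small $h$ in a space dual to the one where the difference quotient converges. The clean way is to use that $u(\cdot)$ is bounded in $L^{2(p-1)}(\Omega)$, so by H\"older the integrand is bounded in $L^1(\Omega)$ by $C\,\|h^{-1}(u(t+h)-u(t))\|_2$, and the strong $L^2$ convergence of the difference quotient together with continuity of the Nemytskii operator $L^{2(p-1)}(\Omega)\to L^2(\Omega)$ lets one pass to the limit; a.e. differentiability of $t\mapsto u(t)$ as an $L^2$-valued function (Lebesgue differentiation, valid since $u\in W^{1,1}(0,T;L^2(\Omega))$) supplies the exceptional null set outside which the argument runs. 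I would also note the $p=2$ case is immediate, since then $\|u(t)\|_2^2$ differentiates directly from $u\in H^1(0,T;L^2(\Omega))$.
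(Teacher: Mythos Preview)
Your approach is correct in spirit and genuinely different from the paper's. The paper works pointwise in $(t,x)$: since $u\in H^1((0,T)\times\Omega)$ (from $u\in L^\infty(0,T;H^1_{\Gamma_0})$ and $u_t\in L^2$) and $r\mapsto|r|^p$ is locally Lipschitz, the Sobolev chain rule of Marcus--Mizel gives $\partial_t|u|^p=p|u|^{p-2}uu_t\in L^1((0,T)\times\Omega)$, and then Fubini plus testing against $\varphi(x)\chi(t)$ yields that $\|u(\cdot)\|_p^p\in W^{1,1}(0,T)$ with the stated derivative. You instead treat $t\mapsto u(t)$ as a Banach--space valued curve and differentiate the composition with $F(w)=\tfrac1p\|w\|_p^p$ via difference quotients, exploiting continuity of the Nemytskii map $L^{2(p-1)}(\Omega)\to L^2(\Omega)$, $w\mapsto|w|^{p-2}w$, together with $u\in C([0,T];L^{2(p-1)})$ and $u\in H^1(0,T;L^2)$. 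Both routes are short; the paper's avoids any discussion of vector--valued absolute continuity, while yours is perhaps more transparent functionally and reuses the same H\"older structure already set up for the contraction argument.

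There is, however, one genuine gap in your absolute--continuity step. You argue that $F$ is locally Lipschitz on $L^{2(p-1)}(\Omega)$ and that $u(\cdot)$ is bounded and uniformly continuous into $L^{2(p-1)}(\Omega)$, and conclude that $t\mapsto F(u(t))$ is absolutely continuous. That inference is false in general: composing a Lipschitz map with a merely uniformly continuous one yields only uniform continuity (think of the Cantor function). What you need---and what you in fact have available---is the mixed estimate
\[
\bigl|F(u(t))-F(u(s))\bigr|\le \Bigl(\sup_{\tau\in[0,T]}\|u(\tau)\|_{2(p-1)}\Bigr)^{p-1}\|u(t)-u(s)\|_2,
\]
obtained from your own mean--value identity and H\"older with exponents $2,2$. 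Since $u\in H^1(0,T;L^2(\Omega))\hookrightarrow AC([0,T];L^2(\Omega))$ and $u$ is bounded in $L^{2(p-1)}(\Omega)$, this gives absolute continuity of $t\mapsto F(u(t))$ directly, and then your a.e.\ difference--quotient computation identifies the derivative. With this correction your proof goes through.
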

\begin{proof}
By Definition~\ref{eq:def1}-- (a) and \eqref{eq:pr} we have
$|u|^p\in L^\infty(0,T;L^1(\Omega))$, and consequently $\int_\Omega
|u|^p\in L^\infty(0,T)\subset L^2(0,T)$. It also follows that $u\in
H^1((0,T)\times\Omega )$. Since the real function $x\mapsto |x|^p$
in locally Lipschitz continuous, by the chain rule in Sobolev spaces
(see \cite{mm})  the function  $t\mapsto|u(t,x)|^p$ is absolutely
continous for almost all $x\in\Omega$ and $\frac\partial{\partial t}
|u|^p=p|u|^{p-2}uu_t\in L^2(0,T;L^1(\Omega))\hookrightarrow
L^1((0,T)\times\Omega)$, where assumption \eqref{eq:pr} was used
again. It follows that for all $\varphi\in C^\infty_c(\Omega)$ and
$\chi\in C^\infty_c(0,T)$ we have
$\int_{(0,T)\times\Omega}|u|^p\varphi\chi'=-\int_{(0,T)\times\Omega}p|u|^{p-2}uu_t\varphi\chi$.
Using Fubini's Theorem, since $\varphi$ is arbitrary it follows that
$\int_0^T|u|^p\chi'=-\int_0^T p|u|^{p-2}uu_t\chi$ in $L^1(\Omega)$.
Since $\int_\Omega p|u|^{p-2}uu_t\in L^2(0,T)$ it follows from last
formula that $ \|u\|_p^p\in H^1(0,T)$ and \eqref{auxiliaryformula}
holds in the weak sense. By \cite[Theorem 8.2]{brezis2} we see that
is holds also almost everywhere in $(0,T)$, concluding the proof.
\end{proof}
We now show that $W_s$ and $W_u$ are invariant under the flow
generated by \eqref{Pmod}.
\begin{lem} \label{rem}
Under the assumptions of Theorem~\ref{Thloc}, let $u$ be the weak
maximal solution of problem \eqref{Pmod}. Also assume that
\eqref{eq:sig} holds. Then
\renewcommand{\labelenumi}{(\roman{enumi})}
\begin{enumerate}
    \item  if $u_0\in W_s$ we have $u(t)\in W_s$ for all  $t\in
    [0,T_{\text{max}})$;
    \item  if $u_0\in W_u$ we have $u(t)\in W_u$ for all  $t\in
    [0,T_{\text{max}})$.
\end{enumerate}
\end{lem}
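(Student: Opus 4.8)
The plan is to prove invariance of $W_s$ and $W_u$ by a continuity and connectedness argument, using the energy identity and the potential-well structure. First I would recall from the energy identity \eqref{eq:ei2} (combined with Lemma~\ref{auxiliary}) that the functional $J(u(t))$ is non-increasing along the flow: indeed, taking $s=0$ in \eqref{eq:ei2} and using \eqref{auxiliaryformula}, one gets
\[
J(u(t))-J(u(0))=-\int_0^t\left(\|u_t\|_{m,\Gamma_1}^m+\|u_t\|_2^2\right)\le 0,
\]
so that $J(u(t))\le J(u_0)<d$ for all $t\in[0,T_{\max})$. Hence the only thing that can change along the flow is the sign of $K(u(t))$, and the membership in $W_s$ versus $W_u$ is entirely governed by this sign (together with the open conditions in the characterizations $W_1,\widetilde{W_1},W_2,\widetilde{W_2}$ from Lemma~\ref{eq:Ld}).

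For part (i), suppose $u_0\in W_s=\widetilde{W_1}$, so $\|u_0\|_p<\widetilde{\lambda_1}$ and $J(u_0)<E_1=d$. I would set $I=\{t\in[0,T_{\max}): \|u(t)\|_p<\widetilde{\lambda_1}\}$; this is relatively open in $[0,T_{\max})$ by continuity of $t\mapsto u(t)$ in $H^1_{\Gamma_0}(\Omega)$ and the Sobolev embedding, and it contains $0$. To see it is also closed, suppose $t_n\to t^*$ with $t_n\in I$ but $\|u(t^*)\|_p=\widetilde{\lambda_1}$ (it cannot exceed $\widetilde{\lambda_1}$ by continuity). Then, using the estimate $J(v)\ge h(\|v\|_p)$ with $h(\lambda)=\tfrac12 B_1^{-2}\lambda^2-\tfrac1p\lambda^p$ established in the proof of Lemma~\ref{eq:Ld} and the fact $h(\widetilde{\lambda_1})=E_1$, we would get $J(u(t^*))\ge h(\widetilde{\lambda_1})=E_1=d$, contradicting $J(u(t^*))\le J(u_0)<d$. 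Hence $I=[0,T_{\max})$, i.e.\ $\|u(t)\|_p<\widetilde{\lambda_1}$ for all $t$, which together with $J(u(t))<d$ gives $u(t)\in\widetilde{W_1}=W_s$.

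For part (ii), suppose $u_0\in W_u=\widetilde{W_2}$, so $\|u_0\|_p>\widetilde{\lambda_1}$ and $J(u_0)<d$. Again by the inequality $J(v)\ge h(\|v\|_p)$ and $J(u(t))\le J(u_0)<d=h(\widetilde{\lambda_1})$, the value $\|u(t)\|_p$ can never equal $\widetilde{\lambda_1}$ (for at such a point $J(u(t))\ge E_1$), and since $t\mapsto\|u(t)\|_p$ is continuous and starts above $\widetilde{\lambda_1}$, it stays above $\widetilde{\lambda_1}$ on all of $[0,T_{\max})$; combined with $J(u(t))<d$ this means $u(t)\in\widetilde{W_2}=W_u$.

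The routine part is the monotonicity of $J$ along solutions, which is immediate from \eqref{eq:ei2} and \eqref{auxiliaryformula}; the only point requiring a little care is that $W_s$ and $W_u$ are characterized by the single \emph{closed/open} condition on $\|u(t)\|_p$ relative to the threshold $\widetilde{\lambda_1}$ — which is exactly what Lemma~\ref{eq:Ld} provides — so that a connectedness argument on $[0,T_{\max})$ applies cleanly. The main (minor) obstacle is thus to quote Lemma~\ref{eq:Ld} in the right form, namely $W_s=\widetilde{W_1}$ and $W_u=\widetilde{W_2}$, and to use the auxiliary function $h$ from its proof; once this is in place the barrier $\{\|v\|_p=\widetilde{\lambda_1}\}$ cannot be crossed because it forces $J\ge E_1$, and the two assertions follow.
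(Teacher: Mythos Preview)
Your proof is correct and follows essentially the same strategy as the paper: establish that $J(u(t))$ is non-increasing via the energy identity and Lemma~\ref{auxiliary}, then use a barrier/continuity argument to show the relevant threshold cannot be crossed. The only (cosmetic) difference is that the paper runs the barrier argument with the function $g(\lambda)=\tfrac{1}{2}\lambda^2-\tfrac{1}{p}B_1^p\lambda^p$ and the threshold $\|\nabla u(t)\|_2=\lambda_1$ (i.e.\ the $W_1,W_2$ characterization in Lemma~\ref{eq:Ld}), whereas you use $h(\lambda)=\tfrac{1}{2}B_1^{-2}\lambda^2-\tfrac{1}{p}\lambda^p$ and the threshold $\|u(t)\|_p=\widetilde{\lambda_1}$ (the $\widetilde{W_1},\widetilde{W_2}$ characterization); since Lemma~\ref{eq:Ld} shows these are equivalent, the two arguments are interchangeable.
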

\begin{proof}
By Lemma~\ref{auxiliary}, the energy identity \eqref{eq:ei2} can be
written as
\begin{equation}\label{enid7}
J(u(\tau))\Big |_s^t=-\int_s^t
(\|u_t(\tau)\|_{m,\Gamma_1}^m+\|u_t(\tau)\|_2^2)\,d\tau.
\end{equation}
 Consequently $t\mapsto J(u(t))$ is
decreasing in $[0,T_{\text{max}})$ and by Lemma~\ref{eq:Ld}
\begin{equation}
J(u(t))\leq J(u_{0})<E_1\quad\text{ for all
$t\in[0,T_{\text{max}})$}. \label{eq:et}
\end{equation}
On the other hand, by \eqref{eq:B1} we have the inequality
$J(u(t))\geq g\left(\left\|\nabla u(t)\right\|_{2}\right)$, where
$g(\lambda)=\lambda^{2}/2-B^{p}_{1}\lambda^{p}/p$ for $\lambda\geq
0$. It is straightforward to verify that $g$ is increasing in
$[0,\lambda_1)$ and decreasing in $[\lambda_1,\infty)$, so
$\lambda_1$ is the maximum point for $g$, and that
$g(\lambda_{1})=E_{1}$. Consequently, by \eqref{eq:et} we have
$\|\nabla u(t)\|_2\not=\lambda_1$ for all $t\in[0,T_{\text{max}})$.
Since the function $t\mapsto \|\nabla u(t)\|_{2}$ is continuous, by
Lemma~\ref{eq:Ld} the proof is complete.
\end{proof}

\begin{proof}[\bf Proof of Theorem \ref{Thglo}]
By Theorem \ref{Thalt} we just have to prove that when  $u_0\in W_s$
the alternative \eqref{bu} in Theorem~\ref{Thalt} leads to a
contradiction, which  is  obtained  by combining
Lemma~\ref{rem}--(i) with the Poincar\`e type inequality recalled at
the beginning of the section.
\end{proof}

\begin{proof}[\bf Proof of Theorem \ref{Thblow}] By Theorem \ref{Thalt}
 it is enough to prove that there are no
solutions in the whole $(0,\infty)\times\Omega$. We argue by
contradiction. Since $J(u_{0})<E_{1}$, we can fix
$E_{2}\in(J(u_{0}),E_1)$. We set
\begin{equation}
H(t):=E_{2}-J(u(t)). \label{eq:Ht1}
\end{equation}
By using \eqref{W1}, Lemma~\ref{eq:Ld} and Lemma~\ref{rem} we get
\begin{equation}\label{eq:Ht}
H(t) < E_{1}-\frac{1}{2}\left\|\nabla
u(t)\right\|^{2}_{2}+\frac{1}{p}\left\|u(t)\right\|^{p}_{p}\leq
E_1-\frac 12 \lambda_1^2+\frac{1}{p}\left\|u(t)\right\|^{p}_{p}
\leq\frac{1}{p}\left\|u(t)\right\|^{p}_{p}.
\end{equation}
By \eqref{enid7} we have
\begin{equation}\label{Hprime}
H'(t)=\left\|u_{t}(t)\right\|^{m}_{m,\Gamma_{1}}+\left\|u_{t}(t)\right\|^{2}_{2}\geq
0,
\end{equation}
so that
\begin{equation}
H(t) \geq H(0)=E_{2}-J(u_{0})>0. \label{eq:w6}
\end{equation}
Since, as claimed in Remark~\ref{servo}, it is trivial to verify
that $m_0(p)\leq p$ for $p\ge 2$, by \eqref{eq:pr} and \eqref{eq:w4}
we have $m<1+2^*/2$, which is nothing but the Sobolev critical
exponent for the trace embedding $H^1(\Omega)\hookrightarrow
L^q(\partial\Omega)$ (see \cite[Theorem~5.22, p. 114]{adams}). Hence
we have that $u(t)_{|\Gamma_1}\in L^m(\Gamma_1)$ for all $t\in
[0,T_{\text{max}})$, so we can  take $\phi=u(t)$ in
(\ref{eq:def2.1}). In this way (here and in the sequel of the proof
explicit dependence on $t$ will be omitted) we obtain the identity
\begin{equation}
\left\|u\right\|^{p}_{p}-\left\|\nabla
u\right\|^{2}_{2}=\int_{\Gamma_{1}}\left|u_{t}\right|^{m-2}u_{t}u+(u_{t},u).
\label{eq:w3}
\end{equation}
We estimate the two terms in right-hand side of \eqref{eq:w3}
separately. By H\"{o}lder inequality we get
\begin{equation}
\left|\int_{\Gamma_{1}}\left|u_{t}\right|^{m-2}u_{t}u\right|\leq
\left\|u_{t}\right\|^{m-1}_{m,\Gamma_{1}}\left\|u\right\|_{m,\Gamma_{1}}.
\label{eq:w1}
\end{equation}
To estimate the $L^{m}(\Gamma_{1})$ norm of $u_{|\Gamma_1}$ we first
recall the trace embedding  for Sobolev space of fractional order
(see \cite[Theorem~7.58, p. 218]{adams} and  \cite{unspeskii})
$H^{s}(\R^n)\hookrightarrow W^{\chi,l}(\R^{n-1})$ when $2\le l
<\infty$, $\chi=s-\frac n2 +\frac {n-1}l>0.$ Since
$W^{\chi,l}(\R^{n-1})\hookrightarrow L^l(\R^{n-1})$, using the $C^1$
regularity of $\Omega$ and a standard partition of the unity we have
the trace embedding $H^{s}(\Omega)\hookrightarrow
L^l(\partial\Omega)$ when $2\le l <\infty$, $s-\frac n2 +\frac
{n-1}l>0$ and $0<s\le 1$. Using the last embedding with
$l=\max\{2,m\}$,  the fact that $\partial\Omega$ has finite surface
measure and H\"{o}lder inequality we get
    \begin{equation}\label{es1}
    \left\|u\right\|_{m,\Gamma_{1}}\leq C_1\left\|u\right\|_{H^{s}(\Omega)}
\end{equation}
with $C_1=C_1(m,s,\Omega)>0$, when
\begin{equation}\label{85}
\max \left\{\frac 12,\frac{n}{2}-\frac{n-1}{m} \right\}<s<1.
\end{equation}
Next, by the interpolation inequality (see
\cite[p.49]{lionsmagenes1}
\begin{footnote}{Actually interpolation inequality is stated in the quoted reference only for $C^\infty$
domains $\Omega$, but as explicitly remarked there this assumption
is not optimal. In particular, since $0<s\le 1$, the $C^1$
regularity assumed here is sufficient to prove the result.
Unfortunately the authors were not able to find a reference where
interpolation inequality is stated under  optimal regularity
assumptions. }\end{footnote} ) and the already quoted Poincar\`e
type inequality, we have
    \begin{equation}\label{es2}
    \left\|u\right\|_{H^{s}(\Omega)}\leq C_2\left\|u\right\|^{1-s}_{2}\left\|\nabla u\right\|^{s}_{2}
\end{equation}
$C_2=C_2(s,\Omega,\Gamma_0)>0$. By combining \eqref{es1} and
\eqref{es2} we get
\begin{equation}
\left\|u\right\|_{m,\Gamma_{1}}\leq
C_3\left\|u\right\|^{1-s}_{2}\left\|\nabla u\right\|^{s}_{2}
\label{eq:w2}
\end{equation}
for some $C_3=C_3(m,s,\Omega,\Gamma_0)>0$. By $\eqref{eq:w1}$ and
$(\ref{eq:w2})$
    \begin{equation}\label{458}\left|\int_{\Gamma_{1}}\left|u_{t}\right|^{m-2}u_{t}u\right|\leq C_3\left\|u_{t}\right\|^{m-1}_{\Gamma_{1},m}\left\|u\right\|^{1-s}_{2}\left\|\nabla
    u\right\|^{s}_{2}.
\end{equation}
By weighted Young inequality, if
\begin{equation}\label{86bis}s <
\frac 2m,
\end{equation}
for any $\delta >0$ we have the estimate
    \[\left|\int_{\Gamma_{1}}\left|u_{t}\right|^{m-2}u_{t}u\right|\leq C_3\left[C_4(\delta)\left\|u_{t}\right\|^{m}_{m,\Gamma_{1}}+
    \delta\left\|\nabla u\right\|^{2}_{2} +\delta\left\|u\right\|^{p}_{p}\right]\left\|u\right\|^{1-s-p(1/m-s/2)}_{p}
\]
where $C_4(\delta)=C_4(\delta,m,s)>0$. Consequently, if
$1-s-p\left(\frac 1m -\frac s2\right)<0$, that is if
\begin{equation}\label{87}
s < \left(\frac{p}{m}-1\right)/\left(\frac{p}{2}-1\right),
\end{equation}
setting $\bar{\alpha}_{s}=-\left[1-s-p(1/m-s/2)\right]/p > 0$ we
obtain
\begin{equation}\label{88}
\left|\int_{\Gamma_{1}}\left|u_{t}\right|^{m-2}u_{t}u\right|\leq
C_3\left[C_4(\delta)\left\|u_{t}\right\|^{m}_{m,\Gamma_{1}}+\delta\left\|\nabla
u\right\|^{2}_{2}
+\delta\left\|u\right\|^{p}_{p}\right]\left\|u\right\|^{-p\bar{\alpha}_{s}}_{p}.
\end{equation}
Now we have to show the existence of a value of the parameter $s$
satisfying \eqref{85}, \eqref{86bis} and \eqref{87}. When $1 < m\leq
2$ we have $\frac pm -1>\frac p2-1$ and $\frac 2m >1$, so
\eqref{85}, \eqref{86bis} and \eqref{87} reduce to $\frac 12 <s<1$.
When $m > 2$ we have
    $\left(\frac{p}{m}-1\right)/\left(\frac{p}{2}-1\right)\leq\frac{2}{m}\leq
    1$ and $\frac n2-\frac {n-1}m>\frac 12$,
so \eqref{85}, \eqref{86bis} and \eqref{87} reduce to
    $\frac{n}{2}-\frac{n-1}{m} <s<
    \left(\frac{p}{m}-1\right)/\left(\frac{p}{2}-1\right)$.
Clearly such an $s$ does exist by assumption $(\ref{eq:w4})$. We fix
it.

Now we consider the second term in the right hand side of
\eqref{eq:w3}. Since $p>2$ and $\Omega$ is bounded, applying
H\"{o}lder inequality we easily get
$$|(u_{t},u)|\leq\left\|u_{t}\right\|_{2}\left\|u\right\|_{2}\leq
C_5\left\|u_{t}\right\|_{2}\left\|u\right\|_{p}
    =C_5\left\|u_{t}\right\|_{2}\left\|u\right\|^{\frac{p}{2}}_{p}\left\|u\right\|^{1-\frac{p}{2}}_{p},$$
where $C_5=C_5(\Omega,p)>0$. By weighted Young inequality, for any
$\delta> 0$ we obtain
\begin{equation}\label{89}
\left|\int_{\Omega}u_{t}u\right|\leq C_5\left[\frac
1{4\delta}\left\|u_{t}\right\|^{2}_{2}+\delta\left\|u\right\|^{p}_{p}\right]\left\|u\right\|^{1-\frac{p}{2}}_{p}.
\end{equation}
Now we set
    \begin{equation}\label{parametri}
    \bar{\beta}_{s}=\min\left\{\bar{\alpha}_{s},-\frac{1}{p}+\frac{1}{2}\right\}.
\end{equation}
Since $p>2$ we have $\bar{\beta}_{s}>0$. Since, by \eqref{eq:Ht} and
\eqref{eq:w6} we have
\begin{equation}\label{upbelow}
\|u\|_p\ge [pH(0)]^{1/p} ,
\end{equation} we can combine \eqref{88}
and \eqref{89} (by also using \eqref{parametri}) to obtain
\begin{multline}
\left|\int_{\Gamma_{1}}\left|u_{t}\right|^{m-2}u_{t}u\right|+ |(u_{t},u)|\\
\leq
C_7\left[C_6(\delta)\left(\left\|u_{t}\right\|^{m}_{m,\Gamma_{1}}+\left\|u_{t}\right\|^{2}_{2}\right)+\delta\left\|\nabla
u\right\|^{2}_{2}
+2\delta\left\|u\right\|^{p}_{p}\right]\left\|u\right\|^{-p\bar{\beta}_{s}}_{p},
\label{eq:twoe} \end{multline} where $C_7=C_7(m,p,\Omega, H(0))>0$
and $C_6(\delta)=C_6(\delta,p,m)>0$. By combining \eqref{eq:w3} with
\eqref{eq:twoe} we get
    \begin{equation}\label{104}\left\|u\right\|^{p}_{p}-\left\|\nabla u\right\|^{2}_{2}
    \leq C_{7}\left[C_6(\delta)\left(\left\|u_{t}\right\|^{m}_{m,\Gamma_{1}}+\left\|u_{t}\right\|^{2}_{2}\right)+
    \delta\left\|\nabla u\right\|^{2}_{2} +2\delta\left\|u\right\|^{p}_{p}\right]\left\|u\right\|^{-p\bar{\beta}_{s}}_{p}
\end{equation}
Consequently, by \eqref{Hprime} and \eqref{upbelow}
\begin{equation*}
\begin{aligned}
\left\|u\right\|^{p}_{p}-\left\|\nabla u\right\|^{2}_{2}  \leq &
C_8(\delta) H'(t)\left\|u\right\|^{-p\bar{\beta}_{s}}_{p}
+ C_7(p H(0))^{-\bar{\beta}_s}\delta \left[\|\nabla u\|^{2}_{2}+ 2 \|u\|^{p}_{p}\right]\\
\leq & C_8(\delta)H'(t)\left\|u\right\|^{-p\bar{\beta}_{s}}_{p}+
C_{9}\delta\left[\left\|\nabla u\right\|^{2}_{2}+
\left\|u\right\|^{p}_{p}\right]
\end{aligned}
\end{equation*}
where $C_8(\delta)=C_8(\delta,m,p,H(0),\Omega)>0$ and
$C_9=C_9(m,p,H(0),\Omega)>0$. Consequently
    \[2(1+C_9\delta)\left[-\frac{\left\|\nabla u\right\|^{2}_{2}}{2}\right]+ p(1-C_9\delta)\frac{1}{p}\left\|u\right\|^{p}_{p}
    \leq C_8(\delta)H'(t)\left\|u\right\|^{-p\bar{\beta}_{s}}_{p}.
\]
By \eqref{eq:Ht1} the last estimate can be rewritten as
\begin{multline}
2\left(1+C_9\delta\right)H(t)-2\left(1+C_9\delta\right) E_{2}+ \left[p\left(1-C_9\delta\right)-2\left(1+C_9\delta\right)\right]
\frac{1}{p}\left\|u\right\|^{p}_{p}\\
\leq C_8(\delta)H'(t)\left\|u\right\|^{-p\bar{\beta}_{s}}_{p}.
\label{eq:w5}
\end{multline}
Now, by Lemma~\ref{rem}, \eqref{lambdais} and \eqref{W2tilde} we
have $\|u\|_p^p\ge \widetilde{\lambda_1}^p=\lambda_1^2$, so
previous estimates yields
\begin{multline}
2\left(1+C_9\delta\right)H(t)-2\left(1+C_9\delta\right) E_2+
\lambda_1^2 \left[\left(1-C_9\delta\right)-\tfrac 2p
\left(1+C_9\delta\right)\right]
\\
\leq C_8(\delta)H'(t)\left\|u\right\|^{-p\bar{\beta}_{s}}_{p}.
\label{eq:w666}
\end{multline}
Now, since $E_2<E_1$, using \eqref{lambdais} and the fact that $C_9$
is independent on $\delta$,  as $\delta\rightarrow 0^{+}$ we have
\begin{multline*}
-2(1+C_9\delta)E_2+\lambda_1^2\left[(1-C_9\delta)-\frac
2p(1+C_9)\delta)\right]\to -2E_2+\lambda_1^2\frac {p-2}p\\ >
-2E_1+\lambda_1^2\frac {p-2}p=0
\end{multline*}
Hence, by fixing $\delta>0$ sufficiently small, there exists two
positive constants $C_{10}$ and  $C_{11}$ dependent on $m$, $p$,
$H(0)$ and $\Omega$ such that
    \begin{equation}
    \label{fest}C_{10}H(t)\leq C_{11}H'(t)\left\|u\right\|^{-p\bar{\beta}_{s}}_{p}.
\end{equation}
By \eqref{eq:Ht} the last estimate implies that
    \begin{equation}\label{fest2}H'(t)\geq
C_{12}H^{1+\bar{\beta}_{s}}_{p}(t)
\end{equation}
where $C_{12}=C_{12}(m,p,H(0),\Omega)>0$, which by integration
yields the required contradiction, concluding the proof.
\end{proof}
\section{More general results}\label{generalizations}
This section is devoted to generalize our results to problem
(\ref{eq:P}), where $Q$ and $f$ satisfy suitable assumptions which
generalize the specific behaviour of $\left|u_{t}\right|^{m-2}u_{t}$
and $\left|u\right|^{p-2}u$. Our assumptions on $Q$ are the
following ones.
\renewcommand{\labelenumi}{(Q\arabic{enumi})}
\begin{enumerate}
    \item $Q$ is a Carath\'eodory real function defined on $(0,\Theta)\times\Gamma_{1}\times\mathbb{R}$ for some $\Theta>0$,
     $Q(t,x,0)=0$ for almost all $(t,x)\in(0,\Theta)\times\Gamma_{1}$, and there exist an exponent $m > 1$ and positive constants
     $c_{1},c_{2},c_{3}$ and $c_{4}$, possibly dependent on $\Theta$, such that
\begin{alignat*}{2}
c_{1}\left|v\right|^{m-1}\leq &\left|Q(t,x,v)\right|\leq
c_{2}\left|v\right|^{m-1} &&\mbox{   when } \left|v\right|\geq 1
\qquad\text{and} \\
c_{3}\left|v\right|^{m-1}\leq &\left|Q(t,x,v)\right|\leq c_{4}
&&\mbox{ when } \left|v\right|\leq 1
\end{alignat*}
for almost all $(t,x)\in(0,\Theta)\times\Gamma_{1}$ and all
$v\in\mathbb{R}$.

\vspace{0.2cm}
  \item  The function $Q(t,x,\cdot)$ is increasing for almost all $(t,x)\in(0,\Theta)\times\Gamma_{1}$.

\end{enumerate}

\begin{rem} \label{remark6} When $Q=Q(v)$ assumptions (Q1)--(Q2) reduce (independently on $\Theta$)
 to assume that
 $Q\in C(\R)$ is increasing and such that
$$ Q(0)=0,\qquad\varliminf_{v\rightarrow 0}\frac{\left|Q(v)\right|}{\left|v\right|^{m-1}} > 0,
\qquad
0<\varliminf_{|v|\rightarrow\infty}\frac{\left|Q(v)\right|}{\left|v\right|^{m-1}}
\le\varlimsup_{|v|\rightarrow\infty}\frac{\left|Q(v)\right|}{\left|v\right|^{m-1}}
<\infty ,$$ as for example $Q=Q_0(v)=a |v|^{\mu-2}v+b
|v|^{m-2}v,\qquad a\ge 0,\quad b>0,\quad 1<\mu\le m$. Moreover
(Q1--2) are also satisfied for any  $\Theta>0$ by
$Q=Q_1(t,v)=d(t)Q_0(v)$, where $d\in
L^\infty_{\text{loc}}([0,\infty))$, $d>0$, $1/d\in
L^\infty_{\text{loc}}([0,\infty))$.
\end{rem}

\begin{rem} Let us note, for a future use, that (Q1)--(Q2) yield the
existence of positive constants $c_5$ and $c_6$ (possibly dependent
on $\Theta$) such that
\begin{gather}
\left|Q(t,x,v)\right|\leq c_5(1+\left|v\right|^{m-1})
\label{eq:OQ1}\\ \intertext{and}
Q(t,x,v)v\geq c_6\left|v\right|^{m}
\label{eq:OQ2}
\end{gather}
for almost all $(t,x)\in(0,\Theta)\times\Gamma_{1}$ and all
$v\in\mathbb{R}$.
\end{rem}

\subsection{Forced heat equation}

We first present our generalization of  Theorem \ref{eq:T1} to the
problem
\begin{equation}
\mbox{ $\left\{\begin{array}{llll}
$$u_{t}-\Delta u=g(t,x) & \mbox{in } (0,T)\times\Omega,$$\\
$$u=0 & \mbox{on } \left[0,T\right)\times\Gamma_{0},$$\\
$$\frac{\partial u}{\partial\nu} = -Q(t,x,u_{t}) & \mbox{on } \left[0,T\right)\times\Gamma_{1},$$\\
$$u(0,x)=u_{0}(x) & \mbox{in } \Omega$$
\end{array}
\right.$}\label{eq:P8}
\end{equation}
where $g$ is a given term acting on $\Omega$ and $T > 0$ is fixed.
\begin{definition} Let $u_{0}\in H^{1}_{\Gamma_{0}}(\Omega)$ and $g\in L^{2}((0,T)\times\Omega)$.
We say that $u$  is a weak solution of $(\ref{eq:P8})$ in
$[0,T]\times\Omega$ if (a--d) of Definition~\ref{eq:def1} hold, with
the distribution identity \eqref{eq:def2} being replaced by
\begin{equation}
\int_{\Omega}u_{t}(t)\phi + \int_{\Omega}\nabla u(t)\nabla \phi +
\int_{\Gamma_{1}}Q(t,\cdot,u_t(t))\,u_{t}(t)\phi=\int_{\Omega}g(t)\phi,
\label{distrforcedQ}
\end{equation}
which makes sense due to \eqref{eq:OQ1}.
\end{definition}
\begin{thm} \label{eq:T6} Suppose that (Q1) and  (Q2) hold with $\Theta=T$ and that
    \mbox{$g\in L^{2}((0,T)\times\Omega)$}.
Then, given any initial datum $u_{0}\in H^{1}_{\Gamma_{0}}(\Omega)$,
there is a unique weak solution $u$ of $(\ref{eq:P8})$ in
$[0,T]\times\Omega$. Moreover $(\ref{eq:4.1})$ and $(\ref{eq:4.2})$
hold and $u$ satisfies the energy identity
\begin{equation*}
\frac{1}{2}\left\|\nabla u\right\|^{2}_{2}\Big
|^{t}_{s}+\int^{t}_{s}\left\|u_{t}\right\|^{2}_{2}+\int^{t}_{s}\int_{\Gamma_{1}}Q(\cdot,\cdot,u_{t})u_{t}=\int^{t}_{s}\int_{\Omega}gu_{t}
\end{equation*}
for $0\leq s\leq t\leq T$. Finally, given any couple of initial data
$u_{01}, u_{02}\in H^{1}_{\Gamma_{0}}(\Omega)$ and any couple of
forcing terms $g_1,g_2\in L^{2}((0,T)\times\Omega)$, denoting by
$u^1$ and $u^2$ the solutions of \eqref{eq:P8} respectively
corresponding to $u_{01}$,  $g_1$ and to $u_{02}$, $g_2$, the
estimate \eqref{hadamardaux} holds true.
\end{thm}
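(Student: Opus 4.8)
The plan is to repeat, with the natural modifications, the Faedo--Galerkin argument used for Theorem~\ref{eq:T1} (which itself follows \cite[Proof of Theorem~1.5]{globalheat}). We keep the same basis $(w_k)_k$ of $X$, orthonormal in $L^2(\Omega)$, the same approximating data $u_{0k}\to u_0$ in $H^1_{\Gamma_0}(\Omega)$ as in \eqref{eq:1.4}, and we look for $u^k(t)=\sum_{j=1}^k y^j_k(t)w_j$ solving the finite--dimensional system obtained from \eqref{eq:1.1} by replacing $|u^k_t|^{m-2}u^k_t$ with $Q(t,\cdot,u^k_t)$. Written as in \eqref{eq:1.2} this reads $G_k(t,y_k'(t))+A_k y_k(t)=H_k(t)$, with $G_k(t,y)=y+\int_{\Gamma_1}Q(t,x,B_k(x)\cdot y)\,B_k(x)\,dx$.

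To solve this system one observes that (Q2) makes $y\mapsto\int_{\Gamma_1}Q(t,x,B_k(x)\cdot y)B_k(x)\,dx$ monotone, so $G_k(t,\cdot)$ is strictly monotone, hence injective; the inequality $Q(t,x,v)v\ge0$ yields $G_k(t,y)\cdot y\ge|y|^2$, whence $G_k(t,\cdot)$ is coercive, therefore onto, and $|G_k^{-1}(t,y)|\le|y|$, exactly as in the proof of Theorem~\ref{eq:T1}; continuity in $y$ (from (Q1) and \eqref{eq:OQ1} via dominated convergence) and measurability in $t$ make $G_k$, and hence $G_k^{-1}$, a Carath\'eodory map. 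Local solvability of \eqref{eq:1.2} together with the a priori bound $|y_k(t)|=\|u^k(t)\|_2\le C'$ obtained as in \eqref{eq:1.6} then gives a solution on all of $[0,T]$. Testing the $k$--th equation by $(y^j_k)'$ and summing produces the approximate energy identity with $\int_{\Gamma_1}Q(\cdot,\cdot,u^k_t)u^k_t$ in place of $\|u^k_t\|^m_{m,\Gamma_1}$; by \eqref{eq:OQ2} this controls $\|u^k_t\|_{L^m((0,T)\times\Gamma_1)}$ uniformly in $k$, and then \eqref{eq:OQ1} together with the identity $(m-1)m'=m$ controls $Q(\cdot,\cdot,u^k_t)$ uniformly in $L^{m'}((0,T)\times\Gamma_1)$, so the analogues of \eqref{eq:1.5}--\eqref{eq:1.6} hold.

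Passing to the limit is carried out exactly as for Theorem~\ref{eq:T1}: up to a subsequence one has the convergences \eqref{eq:1.8}, with $\chi$ now the weak $L^{m'}$--limit of $Q(\cdot,\cdot,u^k_t)$, Aubin--Lions gives $u^k\to u$ in $C([0,T];L^2(\Omega))$ (so $u(0)=u_0$ and $\varphi=u_t$), and one obtains $(u_t,w)+(\nabla u,\nabla w)+\int_{\Gamma_1}\chi w=\int_\Omega gw$ a.e. in $(0,T)$ for every $w\in X$. Since $\chi\in L^{m'}((0,T)\times\Gamma_1)$, Lemma~\ref{eq:l1} (with $\zeta=-\chi$) gives $u\in C([0,T];H^1_{\Gamma_0}(\Omega))$ and the energy identity \eqref{eq:1.13}, together with its version on a general subinterval $[s,t]$. \textbf{The main point} is then the identification $\chi=Q(\cdot,\cdot,u_t)$ a.e. on $(0,T)\times\Gamma_1$, which gives both the stated energy identity and the distribution identity \eqref{distrforcedQ}: taking the $\varlimsup$ in the approximate energy identity and using lower semicontinuity of the norms under weak convergence together with $u_{0k}\to u_0$, one gets $\varlimsup_k\int_0^T\!\!\int_{\Gamma_1}Q(\cdot,\cdot,u^k_t)u^k_t\le\int_0^T\!\!\int_{\Gamma_1}\chi u_t$ upon comparison with the energy identity for $u$; inserting this into the monotonicity inequality $\int_0^T\!\!\int_{\Gamma_1}\big(Q(\cdot,\cdot,u^k_t)-Q(\cdot,\cdot,w)\big)(u^k_t-w)\ge0$ and letting $k\to\infty$, the continuity of $Q(t,x,\cdot)$ and the classical Minty argument (as in \cite{lions}) yield $\int_0^T\!\!\int_{\Gamma_1}\big(\chi-Q(\cdot,\cdot,w)\big)(u_t-w)\ge0$ for all $w\in L^m((0,T)\times\Gamma_1)$, hence $\chi=Q(\cdot,\cdot,u_t)$. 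The only delicate issue here is matching the two energy identities exactly; the $t$--dependence of $Q$ is harmless because (Q1)--(Q2) are uniform in $t$ on $(0,T)$.

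Finally, the estimate \eqref{hadamardaux} (which also yields uniqueness) is obtained as in the last part of the proof of Theorem~\ref{eq:T1}: $v=u^1-u^2$ is a weak solution, in the sense of Lemma~\ref{eq:l1}, of problem \eqref{eq:P2} with $g=g_1-g_2$, $u_0=u_{01}-u_{02}$ and $\zeta=-\big(Q(\cdot,\cdot,u^1_t)-Q(\cdot,\cdot,u^2_t)\big)\in L^{m'}((0,T)\times\Gamma_1)$; by (Q2) one has $-\int_0^t\!\!\int_{\Gamma_1}\zeta v_t\ge0$, so the energy identity of Lemma~\ref{eq:l1} reduces to $\frac12\|\nabla v(t)\|_2^2+\int_0^t\|v_t\|_2^2\le\frac12\|\nabla u_0\|_2^2+\int_0^t\!\!\int_\Omega(g_1-g_2)v_t$, and the same Young and H\"older manipulations already performed there give \eqref{hadamardaux}.
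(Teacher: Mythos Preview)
Your proof is correct and follows essentially the same route as the paper's own sketch: a Faedo--Galerkin approximation with $|u^k_t|^{m-2}u^k_t$ replaced by $Q(t,\cdot,u^k_t)$, the map $G_k$ now depending on $t$ but handled for each fixed $t$ via (Q2) as in \cite{globalheat}, a priori bounds from \eqref{eq:OQ2} and \eqref{eq:OQ1}, identification of $\chi$ by the classical monotonicity (Minty) argument, and the stability estimate \eqref{hadamardaux} via Lemma~\ref{eq:l1} and (Q2). You have simply filled in more detail (the Carath\'eodory nature of $G_k^{-1}$, the explicit $\varlimsup$ comparison of energy identities) than the paper's sketch provides, but the strategy is identical.
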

\begin{proof}[\bf Sketch of the proof of Theorem \ref{eq:T6}]
Using \eqref{eq:OQ1}, \eqref{eq:OQ2} and (Q2) we can  repeat almost
verbatim the proof of Theorem \ref{eq:T1} by replacing everywhere
$|u_t^k|^{m-2}u_t^k$ with $Q(t,x,u_t^k)$, so  starting from the
problem
\begin{equation}
\mbox{ $\left\{\begin{array}{ll}
$$(u^{k}_{t},w_{j})+(\nabla u^{k}_{t},\nabla w_{j})+\int_{\Gamma_{1}}Q(\cdot,\cdot,u^{k}_{t})w_{j}=\int_{\Omega}gw_{j}, & \mbox{    } j=1,\ldots,k,$$\\
$$u^{k}(0)=u_{0k}. $$
\end{array}
\right.$} \label{eq:P9}
\end{equation}
The definition \eqref{sdai} is now replaced by $ G_{k}(t,y)=
y+\int_{\Gamma_{1}}Q(t,\cdot,B_{k}(x)\cdot y)B_{k}(x)dx$, $t\in
(0,T)$, $y\in\mathbb{R}^{k}$, so in the generalization of
problem~\eqref{eq:1.2} now $G_k$ explicitly depends on $t$. By using
assumption (Q2) the arguments of \cite[Proof of Theorem
1.5]{globalheat} continue to work in this more general situation for
any fixed $t\in (0,T)$, while all the other estimates keep
unchanged. The energy identity \eqref{eq:1.12} continues to hold
provided the term $\|u_t^k\|_{m,\Gamma_1}$ is replaced by the term
$\int_{\Gamma_1} Q(t,x,u^k_t)u_t^k$. By using \eqref{eq:OQ2} and
(Q2) we still get \eqref{eq:1.5} with $|u_t^k|^{m-2}u_t^k$ being
replaced by  $Q(t,x,u_t^k)$ in the forth line, where now $C'$
depends also on $c_1$ -- $c_4$.  Finally, to apply the monotonicity
method we use (Q2), which is also used in the proof of estimate
\eqref{hadamardaux}.
\end{proof}
\subsection{Local well--posedness}
We generalize Theorem \ref{Thloc} to problem \eqref{eq:P} under the
following  assumption on $f$:
\renewcommand{\labelenumi}{(F\arabic{enumi})}
\begin{enumerate}
    \item   $f$ is a Carath\'eodory real function defined on
$\Omega\times\mathbb{R}$, $f(x,0)=0$ for almost all $x\in\Omega$ and
there is an exponent $p\geq 2$ and a positive constant $c_7$ such
that for almost all $x\in\Omega$ and all $u_{1},u_{2}\in\mathbb{R}$
    \[\left|f(x,u_{1})-f(x,u_{2})\right|\leq c_7\left|u_{1}-u_{2}\right|(1+\left|u_{1}\right|^{p-2}+\left|u_{2}\right|^{p-2}).
\]
\end{enumerate}
An explicit example of a function $f$ which satisfies (F1) (use
\eqref{**ester}) is given by
\begin{equation}\label{f0}
f=f_0(x,u)=a(x) |u|^{q-2}u+b(x)|u|^{p-2}u,\qquad 2\le q\le p,\quad
a,b\in L^\infty(\Omega). \end{equation}
 When $f$ is independent on $x$
assumption (F1) can be equivalently written as follows:
 \begin{equation} \label{f122}
 f\in
W^{1,\infty}_{loc}(\mathbb{R}),\mbox{    }f(0)=0,\mbox{
}\left|f'(u)\right|=O\left(\left|u\right|^{p-2}\right)\mbox{  as
}\left|u\right|\rightarrow\infty.
\end{equation}

A further example of a non--algebraic nonlinearity satisfying
\eqref{f122} is given by $f=\pm f_1$, where $f_1(u)= |u|^{p-2}u$,
$p\ge 2$, when $|u|\ge 1$ while $f_1(u)=u$ when $|u|\le 1$.

\begin{rem} We note that an immediate consequence of (F1) is the
existence of a positive constant $c_8$ such that
\begin{equation}
\left|f(x,u)\right|\leq c_8(\left|u\right|+\left|u\right|^{p-1})
\label{eq:OF1}
\end{equation}
for almost all $x\in\Omega$ and all $u\in\mathbb{R}$.
\end{rem}

\begin{definition} Let $u_{0}\in H^{1}_{\Gamma_{0}}(\Omega)$ and suppose that (Q1-2), (F1) and
 assumption \eqref{eq:pr} hold.
We say that $u$ is a weak solution of \eqref{eq:P} in
$[0,T]\times\Omega$ if (a--d) of Definition~\ref{eq:def1} hold, with
the distribution identity \eqref{eq:def2} being replaced by
\begin{equation}
\int_{\Omega}u_{t}(t)\phi + \int_{\Omega}\nabla u(t)\nabla \phi +
\int_{\Gamma_{1}}Q(t,x,u_{t}(t))\phi=\int_{\Omega}f(x,u(t))\phi.
\label{eq:DEF.1}
\end{equation}
Moreover we say that $u$ is a weak solution of problem \eqref{eq:P}
in $[0,T)\times\Omega$ if it is  a weak solution of \eqref{eq:P} in
$[0,T']\times\Omega$ for all $T'\in (0,T)$.
\end{definition}

Theorem~\ref{Thloc} is generalized as follows.

\begin{thm} \label{eq:T7} Suppose that (Q1), (Q2) and (F1) hold together with
\eqref{eq:pr}. Then given any initial datum $u_{0}\in
H^{1}_{\Gamma_{0}}(\Omega)$ there is
$T^{*}=T^{*}(\left\|u_{0}\right\|_{H^{1}_{\Gamma_{0}}},m,p,\Omega,\Gamma_{1},c_1,c_2,c_3,c_4)$
in $(0,\min\{1,\Theta\}]$, decreasing in the first variable, such
that problem \eqref{eq:P} has a unique weak solution in
$[0,T^*]\times\Omega$. Moreover \eqref{eq:4.12}, \eqref{eq:4.13} and
\eqref{eq:gengis} hold, together with the energy identity
\begin{equation}
\frac{1}{2}\left\|\nabla u\right\|^{2}_{2}\Big
|^{t}_{s}+\int^{t}_{s}\left\|u_{t}\right\|^{2}_{2}+\int^{t}_{s}\int_{\Gamma_{1}}Q(\cdot,\cdot,u_{t})u_{t}=\int^{t}_{s}\int_{\Omega}f(\cdot,u)u_{t}
\label{eq:4.25}
\end{equation}
for $0\leq s\leq t\leq T^{*}$.

\end{thm}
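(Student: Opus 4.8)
The plan is to transcribe the proof of Theorem~\ref{Thloc}, replacing the use of Theorem~\ref{eq:T1} by Theorem~\ref{eq:T6} and the algebraic structure of $|u|^{p-2}u$ by assumption (F1). Fix $0<T\le \min\{1,\Theta\}$, set $Y_T=C([0,T];H^1_{\Gamma_0}(\Omega))$ with the sup norm and $X_T=\{u\in Y_T:u(0)=u_0\}$. Given $u\in X_T$, the bound $2(p-1)\le 2^*$ coming from \eqref{eq:pr}, together with \eqref{eq:OF1} and the Sobolev Embedding Theorem, yields $\|f(\cdot,u(t))\|_2\le c_8(\|u(t)\|_2+\|u(t)\|_{2(p-1)}^{p-1})\le K(\|u(t)\|_{H^1_{\Gamma_0}}+\|u(t)\|_{H^1_{\Gamma_0}}^{p-1})$, so $f(\cdot,u)\in L^\infty((0,T);L^2(\Omega))\subset L^2((0,T)\times\Omega)$. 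Hence Theorem~\ref{eq:T6}, applied with $\Theta=T$ and forcing term $g=f(\cdot,u)$, produces a unique weak solution $v$ of \eqref{eq:P8} with this $g$, satisfying \eqref{eq:4.1}, \eqref{eq:4.2} and the corresponding energy identity. One then defines $\Phi:X_T\to X_T$ by $\Phi(u)=v$; a fixed point of $\Phi$ is precisely a weak solution of \eqref{eq:P}.

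First I would show that $\Phi$ maps the ball $B_R=\{u\in X_T:\|u\|_{Y_T}\le R\}$ into itself for $R=4R_0$, $R_0:=\|u_0\|_{H^1_{\Gamma_0}}$, and $T$ small. Using the energy identity of Theorem~\ref{eq:T6} for $v$, the bound \eqref{eq:OF1}, Young's inequality and $T\le 1$, one reproduces the chain \eqref{Coco52}--\eqref{eq:lastr} of the proof of Theorem~\ref{Thloc}; the only novelty is the harmless extra contribution of the linear term $\|u\|_2$ in \eqref{eq:OF1}, which is dominated in exactly the same way. This fixes $R=4R_0$ and $T\le \min\{1,\Theta,K R_0^{2(2-p)}\}$.

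Next, for the contraction, let $u,\bar u\in B_R$, $v=\Phi(u)$, $\bar v=\Phi(\bar u)$ and $w=v-\bar v$. Then $w$ is a weak solution, in the sense of Lemma~\ref{eq:l1}, of the problem with equation $w_t-\Delta w=f(\cdot,u)-f(\cdot,\bar u)$, boundary flux $\zeta=-Q(t,x,v_t)+Q(t,x,\bar v_t)$ and $w(0)=0$: indeed $f(\cdot,u)-f(\cdot,\bar u)\in L^2((0,T)\times\Omega)$ by (F1) and the Sobolev Embedding Theorem, while $\zeta\in L^{m'}((0,T)\times\Gamma_1)$ because $v_t,\bar v_t\in L^m((0,T)\times\Gamma_1)$ and \eqref{eq:OQ1} gives $|\zeta|\le c_5(2+|v_t|^{m-1}+|\bar v_t|^{m-1})$, with $\Gamma_1$ of finite surface measure and $T<\infty$. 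The energy identity of Lemma~\ref{eq:l1}, combined with the monotonicity of $Q(t,x,\cdot)$ from (Q2) (which makes $\int_0^t\int_{\Gamma_1}[Q(t,x,v_t)-Q(t,x,\bar v_t)]\,w_t\ge 0$ and lets us discard that term) and with the Lipschitz-type bound in (F1) used in place of \eqref{**ester}, reproduces the estimate \eqref{spinoza}; the triple H\"older inequality, the Sobolev Embedding Theorem and \eqref{eq:3.4}, exactly as in \eqref{crunccrunc}--\eqref{3crunc}, then yield $\|w\|_{Y_T}^2\le K R^{2(p-2)}T\,\|u-\bar u\|_{Y_T}^2$, again up to the innocuous linear term from (F1). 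Choosing $T^*=\min\{1,\Theta,K R_0^{2(2-p)},\tfrac12 K'{}^{-1}(4R_0)^{2(2-p)}\}$, decreasing in $R_0$, makes $\Phi$ a contraction on $B_{4R_0}$, and the Banach Contraction Theorem provides the unique solution in $B_{4R_0}$ on $[0,T^*]$; \eqref{eq:4.12}, \eqref{eq:4.13} and \eqref{eq:gengis} follow as in the proof of Theorem~\ref{Thloc}, and the energy identity \eqref{eq:4.25} is inherited from Theorem~\ref{eq:T6} (equivalently, from Lemma~\ref{eq:l1} applied to $u$ with $\zeta=-Q(t,x,u_t)$ and $g=f(\cdot,u)$). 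Uniqueness among all weak solutions on $[0,T^*]\times\Omega$ then follows by the same ODE-type continuation argument used at the end of the proof of Theorem~\ref{Thloc}, since Lemma~\ref{eq:l1} ensures every weak solution lies in $C([0,T^*];H^1_{\Gamma_0}(\Omega))$.

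Since this is a transcription of the proof of Theorem~\ref{Thloc}, I do not expect a serious obstacle; the points requiring care are: (i) extracting from (F1) and \eqref{eq:pr} that the Nemitski map $u\mapsto f(\cdot,u)$ is locally Lipschitz from $H^1_{\Gamma_0}(\Omega)$ into $L^2(\Omega)$, which is what both the self-mapping and the contraction estimates rest on; (ii) verifying, via \eqref{eq:OQ1}, the integrability $\zeta\in L^{m'}((0,T)\times\Gamma_1)$ needed to invoke Lemma~\ref{eq:l1} in the contraction step, together with the sign condition coming from (Q2); and (iii) the bookkeeping of constants so that $T^*$ depends only on the quantities listed in the statement, in particular the constraint $T^*\le\Theta$ forced by the domain of validity of (Q1)--(Q2).
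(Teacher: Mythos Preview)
Your proposal is correct and follows essentially the same route as the paper's own proof: define $\Phi$ via Theorem~\ref{eq:T6}, use \eqref{eq:OF1} and (Q2) for the self-mapping estimate, and Lemma~\ref{eq:l1} together with (F1) and (Q2) for the contraction. The only cosmetic difference is that the paper tracks the linear contribution from (F1)/\eqref{eq:OF1} explicitly, writing $K_2(R^2+R^{2(p-1)})T$ in place of \eqref{Coco52} and $K_{13}^2(1+R^{p-2})^2T$ in place of \eqref{3crunc}, which is exactly what you allude to as the ``harmless extra linear term''.
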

\begin{proof}[\bf Sketch of the proof]
 We repeat the proof of Theorem \ref{Thloc}. We take $T\leq\Theta$
 and  $u\in X_T$. We note that, by \eqref{eq:OF1} and \eqref{eq:pr},
 we have $f(\cdot,u)\in
L^\infty(0,T;L^2(\Omega))$. So by
  Theorem \ref{eq:T6} there is  a unique solution $v$ of the
  problem
\begin{equation}
\mbox{ $\left\{\begin{array}{llll}
$$v_{t}-\Delta v=f(x,u) & \mbox{in } (0,T^{*})\times\Omega,$$\\

$$v=0 & \mbox{on } \left[0,T^{*}\right)\times\Gamma_{0},$$\\

$$\frac{\partial v}{\partial\nu} = -Q(t,x,v_{t}) & \mbox{on } \left[0,T^{*}\right)\times\Gamma_{1},$$\\

$$v(0,x)=u_{0}(x) & \mbox{in } \Omega.$$
\end{array}
\right.$}\label{eq:P10}
\end{equation}
We set $\Phi:X_T\to X_T$ by $\Phi(u)=v$. By using the same arguments
in the proof of Theorem \ref{Thloc} together with assumptions (Q2)
and (F1)  we get for any $u\in B_R$, the estimate
\begin{equation}\label{Coco521}
\frac 12\left\|\nabla v(t)\right\|^2_2 + \frac 12 \int^t_0
\left\|v_{t}\right\|^2_2 \leq  \frac 12\left\|\nabla
u_{0}\right\|^2_2+ K_2 (R^2+R^{2(p-1)}) T\end{equation}
 which
generalizes \eqref{Coco52} to this more general situation, where now
the constants $K_i$ depends also on $c_7$. Then we proceed  as in
the quoted proof  with $(R^{2}+R^{2(p-1)})$ replacing $R^{2(p-1)}$.
Consequently we get that $\Phi(B_R)\subset B_R$ provided that
\begin{equation}\label{Tnew}
R=4 R_0, \qquad\text{and}\quad T \leq \min\left\{1,\Theta,
K_3(16+16^{p-1}{R_0}^{2(p-2)})^{-1}\right\},
\end{equation}
generalizing \eqref{eq:T}. In order to show that, for suitable $T$,
$\Phi$ is a contraction in $B_R$ we proceed exactly as in the quoted
proof by taking $u,\bar{u}\in B_{R}$, $v=\Phi(u)$,
$\bar{v}=\Phi(\bar{u})$, $w=v-\bar{v}$. Clearly, $w$ is a weak
solution of the problem
\begin{equation}
\mbox{ $\left\{\begin{array}{llll}
$$w_{t}-\Delta w=f(x,u)-f(x,\bar{u}) & \mbox{in } (0,T)\times\Omega,$$\\
$$w=0 & \mbox{on } \left[0,T\right)\times\Gamma_{0},$$\\
$$\frac{\partial w}{\partial\nu} = -Q(t,x,v_t)+Q(t,x,\bar{v}_{t}
& \mbox{on } \left[0,T\right)\times\Gamma_{1},$$\\
$$w(0,x)=0 & \mbox{in } \Omega.$$
\end{array}
\right.$}\label{eq:P6BBIS}
\end{equation}
generalizing \eqref{eq:P6}. Since by \eqref{eq:OF1}  we have
$f(\cdot,u), f(\cdot,\bar{u})\in L^\infty(0,T;L^2(\Omega))$ and by
\eqref{eq:OQ1} we have $Q(\cdot,\cdot,v_t),
Q(\cdot,\cdot,\bar{v}_t)\in L^{m'}((0,T)\times\Gamma_1)$  we can
apply  Lemma~\ref{eq:l1} to get
\begin{multline*}
\frac{1}{2}\left\|\nabla w\right(t)\|^{2}_{2}+
\int^{t}_{0}\left\|w_{t}\right\|^{2}_{2}+
\int^{t}_{0}\int_{\Gamma_{1}}\left[Q(\cdot,\cdot,v_{t})-Q(\cdot,\cdot,\bar{v}_{t})\right]w_{t}\\
=\int^{t}_{0}\int_{\Omega}\left[f(\cdot,u)-f(\cdot,\bar{u})\right]w_{t}.
\end{multline*}
Using (F1) and (Q2) we generalize the estimate \eqref{spinoza} to
the following one
\begin{equation}
\frac{1}{2}\left\|\nabla w(t)\right\|^{2}_{2}+
\int^{t}_{0}\left\|w_{t}\right\|^{2}_{2} \leq
c_{7}\int^{T}_{0}\int_{\Omega}\left|u-\bar{u}\right|(1+\left|u\right|^{p-2}+\left|\bar{u}\right|^{p-2})\left|w_{t}\right|.
\label{eq:4.20}
\end{equation}
Consequently exactly the same arguments used in the quoted proof
allow to prove the estimate
\begin{equation}
\left\|w(t)\right\|^{2}_{H^{1}_{\Gamma_{0}}}  \leq
K^{2}_{13}(1+R^{(p-2)})^2
T\left\|u-\bar{u}\right\|^{2}_{L^{\infty}(0,T;H^{1}_{\Gamma_{0}}(\Omega))}.
\label{3cruncnew}\end{equation} replacing \eqref{3crunc}, so by
\eqref{Tnew}$_1$, $\Phi$ is a contraction provided $T
<K_{13}^{-2}(1+4^{p-2} R_0^{p-2})^{-2}$.  We can the finally fix
$T^*$ and complete the proof.
\end{proof}
The following result is nothing but the generalization of
Theorem~\ref{Thalt}.
\begin{thm} \label{eq:T8}
Suppose that (Q1--2) hold for all $\Theta>0$, together with (F1) and
\eqref{eq:pr}. Then the assertions of Theorem~\ref{Thalt} hold when
problem \eqref{Pmod} is replaced by  \eqref{eq:P}.
\end{thm}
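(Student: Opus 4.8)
The plan is to follow the proof of Theorem~\ref{Thalt} essentially verbatim, replacing the local well--posedness Theorem~\ref{Thloc} by its generalization Theorem~\ref{eq:T7} and invoking (Q1)--(Q2), (F1) precisely where the explicit form of $|u_t|^{m-2}u_t$ and $|u|^{p-2}u$ was used there. First I would construct the maximal solution: let $\mathcal{U}$ be the set of all weak solutions of \eqref{eq:P} (each on its own interval $[0,T]$); any two members of $\mathcal{U}$ agree on the intersection of their domains by the elementary continuation argument at the end of the proof of Theorem~\ref{eq:T7} (put $T'=\sup\{\tau:u=\widetilde u\text{ on }[0,\tau]\}$, translate the origin to $T'$, and use the uniqueness statement of Theorem~\ref{eq:T7}). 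Then $u$ is well defined on $[0,T_{\max})$, with $T_{\max}$ the supremum of the lengths of those intervals, and the regularity $u\in C([0,T_{\max});H^1_{\Gamma_0}(\Omega))$, $u_t\in L^m((0,T)\times\Gamma_1)\cap L^2((0,T)\times\Omega)$ for $T\in(0,T_{\max})$, as well as the energy identity \eqref{eq:4.25}, is inherited from Theorem~\ref{eq:T7}.

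Next I would prove the dichotomy by contradiction. Suppose $T_{\max}<\infty$ and $\varlimsup_{t\to T_{\max}^{-}}\|u(t)\|_{H^1_{\Gamma_0}}<\infty$; then there is a sequence $T_n\uparrow T_{\max}$ with $\|u(T_n)\|_{H^1_{\Gamma_0}}\le M_0$ for some $M_0$. Since (Q1)--(Q2) are assumed for \emph{every} $\Theta>0$, fix $\Theta>T_{\max}+1$; the time--shifted Cauchy problem \eqref{eq:P} with initial time $T_n$ and datum $u(T_n)$ then falls under Theorem~\ref{eq:T7}, which yields a solution on $[T_n,T_n+T']$ with $T'=T^*(M_0,m,p,\Omega,\Gamma_1,\dots)\in(0,1]$ \emph{independent of $n$}; picking $n$ with $T_{\max}-T_n<T'$ extends $u$ past $T_{\max}$, the desired contradiction. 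This is the one point where the hypothesis ``(Q1--2) for all $\Theta>0$'' is genuinely needed, so that restarting at times arbitrarily close to $T_{\max}$ is legitimate.

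For continuous dependence I would fix $T\in(0,T_{\max})$, set $M=\|u\|_{C([0,T];H^1_{\Gamma_0}(\Omega))}$ and choose $\Theta>T+1$. For $n$ large, $\|u_{0n}\|_{H^1_{\Gamma_0}}\le M+1$, so Theorem~\ref{eq:T7} furnishes solutions $u^n$ on $[0,T^*]$ with $T^*=T^*(M+1,\dots)\in(0,1]$ and $\|u^n\|_{C([0,T^*];H^1_{\Gamma_0}(\Omega))}\le 4(M+1)$. Writing $w^n=u^n-u$, which solves \eqref{eq:P} with interior source $f(\cdot,u^n)-f(\cdot,u)$ and Neumann datum $-Q(t,x,u^n_t)+Q(t,x,u_t)$, I would apply Lemma~\ref{eq:l1}: the boundary integral $\int_0^t\int_{\Gamma_1}[Q(\cdot,\cdot,u^n_t)-Q(\cdot,\cdot,u_t)]\,w^n_t$ is nonnegative by the monotonicity (Q2) and is discarded, while the interior source is controlled by (F1) through the estimate \eqref{3cruncnew} (equivalently \eqref{eq:4.20}) already derived in the proof of Theorem~\ref{eq:T7}, which here plays the role of \eqref{crunccrunc}. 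This gives $\|w^n(t)\|^2_{H^1_{\Gamma_0}}\le 2\|u_{0n}-u_0\|^2_{H^1_{\Gamma_0}}+C\int_0^t\|w^n\|^2_{H^1_{\Gamma_0}}$ on $[0,T^*]$, hence an exponential bound by Gronwall; iterating it over the finitely many consecutive subintervals of length $T^*$ covering $[0,T]$ --- at each step the bound $\|u^n(jT^*)\|_{H^1_{\Gamma_0}}\le M+1$ needed to continue holds for $n$ large by the previous step together with $u_{0n}\to u_0$ --- yields that $u^n$ is eventually defined on $[0,T]$ and $u^n\to u$ in $C([0,T];H^1_{\Gamma_0}(\Omega))$.

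I do not expect a serious obstacle: everything is a transcription of the proof of Theorem~\ref{Thalt}, and the two mildly delicate points --- absorbing the monotone boundary operator $Q$ via (Q2), exactly as the monotonicity of $x\mapsto|x|^{m-2}x$ was used, and replacing the polynomial source estimate \eqref{crunccrunc} by its (F1)--version \eqref{3cruncnew} --- have already been handled in the proof of Theorem~\ref{eq:T7}. The only thing to watch throughout is to enlarge $\Theta$ before each restart, which the assumptions on $Q$ permit.
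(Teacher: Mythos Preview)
Your proposal is correct and mirrors the paper's own proof essentially line by line: construct the maximal solution by gluing, argue the blow--up alternative by time--shifting and invoking Theorem~\ref{eq:T7} with a uniform $T^*$ (using that (Q1--2) hold for all $\Theta$ so the shifted terms $Q_n(t,x,v)=Q(t+T_n,x,v)$ satisfy (Q1--2) on a common interval with common constants), and prove continuous dependence via Lemma~\ref{eq:l1}, discarding the boundary term by the monotonicity (Q2) and estimating the interior source by (F1) to recover \eqref{fascio}, then Gronwall and iterate. The only cosmetic difference is that the paper makes the time--shift explicit by introducing $Q_n$ and noting $\Theta_n\ge 1$, whereas you fix a single large $\Theta$ up front; both achieve the uniformity of $T^*$ in $n$.
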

\begin{proof}[\bf Sketch of the proof] We describe the adaptations
needed to cover this more general situation with respect to the
arguments used in the proof of Theorem~\ref{Thalt}. The existence of
a unique weak maximal solution of \eqref{eq:P} follows exactly in
the same way. When proving the alternative (i--ii), since the
equation is  not autonomous (as the term $Q$ is explicitely
time--dependent) a more detailed explanation is needed. Let us
suppose by contradiction that \eqref{alternative} holds, so there is
a sequence $T_n\to T_{\text{max}}^-<\infty$ such that
$\|u(T_n)\|_{H^1_{\Gamma_0}}$ is bounded. Since $Q$ satisfies
assumptions (Q1--2) for all positive $\Theta$, we can choose
$\Theta=T_{\text{max}}+1$. We set for any $n\in\N$ the time--shifted
nonlinear term $Q_n(t,x,v)=Q(t+T_n,x,v)$, which satisfies
assumptions (Q1--2) with $\Theta=\Theta_n:=T_{\text{max}}-T_n+1\ge
1$, so that  $Q_n$ satisfies the same assumptions for $\Theta=1$ for
all $n\in\N$. It follows that the existence time $T^*$ assured by
Theorem~\ref{eq:T7} is independent on $n$, so problem \eqref{eq:P}
with initial time $T_n$ and initial datum $u(T_n)$ has a unique weak
solution in $[T_n,T_n+T^*]\times\Omega$, which leads to the desired
contradiction.

When proving the continuous dependence of the solution $u$ on the
initial datum we get the energy identity
\begin{multline*}
\frac{1}{2}\left\|\nabla w^n\right\|^{2}_{2} \Big
|^{t}_{0}+\int^{t}_{0}\left\|w^n_t\right\|^{2}_{2}
+\int^{t}_{0}\int_{\Gamma_{1}}\left[Q(\cdot,\cdot,u^n_t)-Q(\cdot,\cdot,u_t)\right]w^n_t \\
=\int^{t}_{0}\int_{\Omega}\left[f(\cdot,u^n)-f(\cdot,u)\right]w^n_t
\end{multline*}
generalizing \eqref{eq:rhs}. Using assumptions (Q2), (F1) and
\eqref{eq:pr} we then get  the estimate \eqref{fascio} again, so we
can conclude the proof exactly as in Theorem \ref{Thalt}.
\end{proof}

\subsection{Global existence versus blow--up}
In order to generalize Theorems~\ref{Thglo} and \ref{Thblow} to
problem (\ref{eq:P}) we first generalize Lemma~\ref{auxiliary}. We
introduce the notation $$F(x,u)=\int^{u}_{0}f(x,s)ds.$$
\begin{lem}\label{auxiliarygener}
Under the assumptions of Theorem~\ref{eq:T7}, let $u$ be a weak
solution of problem \eqref{eq:P} in $[0,T]\times\Omega$. Then
\begin{equation}\label{auxiliaryformulagen}
\frac d{dt}F(\cdot,u(t))=\int_\Omega f(\cdot,
u(t))\,u_t(t)\qquad\text{for almost all $t\in (0,T)$.}
\end{equation}
\end{lem}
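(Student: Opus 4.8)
The plan is to mimic the proof of Lemma~\ref{auxiliary}, the only new feature being the explicit dependence of $F$ on $x$, which requires a little care with joint measurability. First I would record the consequences of assumption (F1). Integrating \eqref{eq:OF1} in the second variable gives $|F(x,u)|\le c_8(|u|^2/2+|u|^p/p)$ for a.e.\ $x\in\Omega$ and all $u\in\R$; since $u\in L^\infty(0,T;H^1_{\Gamma_0}(\Omega))$ and \eqref{eq:pr} holds, the Sobolev Embedding Theorem yields $|u|^2,|u|^p\in L^\infty(0,T;L^1(\Omega))$, hence $t\mapsto\int_\Omega F(\cdot,u(t))\in L^\infty(0,T)\subset L^2(0,T)$. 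From Definition~\ref{eq:def1}--(a) we also have $u\in H^1((0,T)\times\Omega)$.

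Next I would apply the chain rule in Sobolev spaces. For a.e.\ $x\in\Omega$ the function $s\mapsto F(x,s)$ lies in $W^{1,\infty}_{\loc}(\R)$ with derivative $f(x,\cdot)$, since $f(x,\cdot)$ is continuous ($f$ being Carath\'eodory) and locally bounded (by \eqref{eq:OF1}). By \cite{mm} the map $t\mapsto F(x,u(t,x))$ is then absolutely continuous for a.e.\ $x\in\Omega$ and $\frac\partial{\partial t}F(\cdot,u)=f(\cdot,u)u_t$. To see that this product is integrable, note that by \eqref{eq:OF1} one has $|f(\cdot,u)u_t|\le c_8(|u|+|u|^{p-1})|u_t|$, and since $p-1\le 2^*/2$ the Sobolev Embedding Theorem together with the boundedness of $\Omega$ gives $|u|,|u|^{p-1}\in L^\infty(0,T;L^2(\Omega))$; combined with $u_t\in L^2((0,T)\times\Omega)$ and the Cauchy--Schwarz inequality this shows $f(\cdot,u)u_t\in L^2(0,T;L^1(\Omega))\hookrightarrow L^1((0,T)\times\Omega)$.

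Finally I would test with products $\varphi\chi$, $\varphi\in C^\infty_c(\Omega)$, $\chi\in C^\infty_c(0,T)$: from the absolute continuity above and Fubini's Theorem one obtains $\int_{(0,T)\times\Omega}F(\cdot,u)\varphi\chi'=-\int_{(0,T)\times\Omega}f(\cdot,u)u_t\varphi\chi$, and letting $\varphi$ vary, $\int_0^T\big(\int_\Omega F(\cdot,u)\big)\chi'=-\int_0^T\big(\int_\Omega f(\cdot,u)u_t\big)\chi$. Since $\int_\Omega f(\cdot,u)u_t\in L^2(0,T)$ this means $t\mapsto\int_\Omega F(\cdot,u(t))$ belongs to $H^1(0,T)$ with weak derivative $\int_\Omega f(\cdot,u)u_t$, and \cite[Theorem~8.2]{brezis2} upgrades \eqref{auxiliaryformulagen} to an almost-everywhere identity. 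The only genuinely delicate point is the joint measurability of $(t,x)\mapsto F(x,u(t,x))$ needed to apply the Sobolev-space chain rule and Fubini's Theorem; this is handled exactly as for $|u|^p$ in Lemma~\ref{auxiliary}, using that $f$, and hence $F$, is Carath\'eodory.
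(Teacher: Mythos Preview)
Your proof is correct and follows exactly the same approach as the paper, which simply notes the bound $|F(x,u)|\le c_9(1+|u|^p)$ from \eqref{eq:OF1} and then says that the arguments of Lemma~\ref{auxiliary} apply verbatim. You have in fact written out those arguments in full detail, including the use of the chain rule from \cite{mm}, the Fubini step, and the appeal to \cite[Theorem~8.2]{brezis2}; your additional remark on joint measurability via the Carath\'eodory property of $f$ (hence of $F$) is a point the paper leaves implicit.
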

\begin{proof}
We first note that an immediate consequence of \eqref{eq:OF1} is
that $|F(x,u)|\leq c_9 (1+|u|^p)$ for a positive constant $c_9$.
Hence $\int_\Omega F(\cdot, u)\in L^\infty(0,T)\subset L^2(0,T)$.
Consequently exactly the same arguments used in the proof of
Lemma~\ref{auxiliary} apply to this more general case.
\end{proof}

To extend in a suitable way the definition of the stable and
unstable sets we need to introduce a second structural assumption on
the nonlinearity $f$.
\renewcommand{\labelenumi}{(F2)}
\begin{enumerate}
\item There is $c_{10} \geq 0$ such that
$F(x,u)\leq \dfrac{c_{10}}{p}\left|u\right|^{p}$ for almost all
$x\in\Omega$ and all $u\in\mathbb{R}$.
\end{enumerate}

We remark that the model nonlinearity $f_0$ defined in \eqref{f0}
satisfies (F2) if and only if $a\le 0$.

When $\sigma(\Gamma_{0})>0$, $p>2$ and (F2) holds we set
\begin{equation}\label{D1gen}
D_{1}:=\sup_{u\in H^{1}_{\Gamma_{0}}(\Omega), u\neq
0}\frac{\int_{\Omega}F(\cdot,u)}{\left\|\nabla
u\right\|^{p}_{2}}\leq \frac{c_{10}}{p}B^{p}_{1}.
\end{equation}
When $D_1>0$ we also set
\begin{equation}\label{DefE1}
    \lambda_{1}=(pD_{1})^{-1/(p-2)},\quad
    E_{1}=\left(\frac{1}{2}-\frac{1}{p}\right)\lambda^{2}_{1},
\end{equation}
while $\lambda_1=E_1=+\infty$ when $D_1\le 0$. Moreover we denote
\begin{align}\label{defJgen}
&J(u)=\frac 12 \|\nabla u_0\|_2^2-\int_\Omega
F(\cdot,u)\qquad\text{for
any $u\in H^1_{\Gamma_0}(\Omega)$,} \\
&W_{s}=\left\{u_{0}\in H^{1}_{\Gamma_{0}}(\Omega): \|\nabla
u_0\|<\lambda_1 \mbox{ and } J(u_{0})<E_1\right\}, \label{eq:Wsgen}\\
&W_{u}=\left\{u_{0}\in H^{1}_{\Gamma_{0}}(\Omega): \|\nabla
u_0\|>\lambda_1 \mbox{ and } J(u_{0})<E_1\right\}. \label{eq:Wugen}
\end{align}
Clearly due to Lemma~\ref{eq:Ld} when $f=|u|^{p-2}u$ definitions
\eqref{eq:Wsgen} and \eqref{eq:Wugen} concide with \eqref{eq:Ws} and
\eqref{eq:Wu}, even if they are inspired from \eqref{W1} and
\eqref{W2}.

We now generalize the potential--well argument contained in
Lemma~\ref{rem}.
\begin{lem} \label{gensu}
Suppose that (Q1--2) hold for all $\Theta>0$, together with (F1--2) and
\eqref{eq:pr}. Suppose moreover that $\sigma(\Gamma_{0})>0$ and
$p>2$. Then the conclusion of Lemma~\ref{rem} continue to hold.
\end{lem}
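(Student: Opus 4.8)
The plan is to follow the proof of Lemma~\ref{rem} line by line, the only new ingredients being the derivation formula of Lemma~\ref{auxiliarygener}, the coercivity bound \eqref{eq:OQ2} for the dynamical term, and the structural hypothesis (F2) as encoded in the constant $D_1$ of \eqref{D1gen}. First I would dispose of the trivial case $D_1\le 0$: then $\lambda_1=E_1=+\infty$ by \eqref{DefE1}, so $W_u=\emptyset$ while $W_s=\{u_0\in H^1_{\Gamma_0}(\Omega):J(u_0)<+\infty\}$; since $|F(x,u)|\le c_9(1+|u|^p)$ (as observed in the proof of Lemma~\ref{auxiliarygener}) and $u(t)\in H^1_{\Gamma_0}(\Omega)\hookrightarrow L^p(\Omega)$, one has $J(u(t))\in\R$ for every $t$, so both assertions hold trivially. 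Hence we may assume $D_1>0$.

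Next I would rewrite the energy identity \eqref{eq:4.25} in terms of the functional $J$ of \eqref{defJgen}. By Lemma~\ref{auxiliarygener} the map $t\mapsto\int_\Omega F(\cdot,u(t))$ is absolutely continuous with derivative $\int_\Omega f(\cdot,u(t))u_t(t)$, so subtracting its increment from \eqref{eq:4.25} gives, for $0\le s\le t<T_{\max}$,
\[
J(u(t))-J(u(s))=-\int_s^t\Big(\|u_t(\tau)\|_2^2+\int_{\Gamma_1}Q(\tau,\cdot,u_t(\tau))\,u_t(\tau)\Big)\,d\tau .
\]
By \eqref{eq:OQ2} the integrand on the right is bounded below by $\|u_t(\tau)\|_2^2+c_6\|u_t(\tau)\|_{m,\Gamma_1}^m\ge 0$, so $t\mapsto J(u(t))$ is non-increasing and, since $u_0\in W_s\cup W_u$, we obtain $J(u(t))\le J(u_0)<E_1$ for all $t\in[0,T_{\max})$.

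Then I would establish the profile estimate for $J$. By the very definition \eqref{D1gen} of $D_1$ we have $\int_\Omega F(\cdot,v)\le D_1\|\nabla v\|_2^p$ for all $v\in H^1_{\Gamma_0}(\Omega)$, hence $J(v)\ge g(\|\nabla v\|_2)$ with $g(\lambda):=\frac12\lambda^2-D_1\lambda^p$. A direct computation of $g'$ shows that $g$ is increasing on $[0,\lambda_1)$ and decreasing on $[\lambda_1,\infty)$, and that $g(\lambda_1)=E_1$, with $\lambda_1,E_1$ as in \eqref{DefE1}. Combining $J(u(t))\ge g(\|\nabla u(t)\|_2)$ with $J(u(t))<E_1=g(\lambda_1)$ forces $\|\nabla u(t)\|_2\neq\lambda_1$ for every $t\in[0,T_{\max})$.

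Finally, since $u\in C([0,T_{\max});H^1_{\Gamma_0}(\Omega))$ by Theorem~\ref{eq:T8}, the function $t\mapsto\|\nabla u(t)\|_2-\lambda_1$ is continuous on the connected interval $[0,T_{\max})$ and never vanishes, so it keeps a constant sign. If $u_0\in W_s$ then $\|\nabla u_0\|_2<\lambda_1$, whence $\|\nabla u(t)\|_2<\lambda_1$ for all $t$, and together with $J(u(t))<E_1$ this yields $u(t)\in W_s$ by \eqref{eq:Wsgen}; the case $u_0\in W_u$ is identical with the reversed inequality and \eqref{eq:Wugen}. I do not expect any genuine obstacle here: all the analytic content has already been isolated in Lemma~\ref{auxiliarygener} and in the elementary inequalities \eqref{eq:OQ2} and \eqref{D1gen}, and what remains is the same convexity-type bookkeeping for $g$ as in Lemma~\ref{rem}, the only point needing a word of care being the separate treatment of the degenerate case $D_1\le 0$.
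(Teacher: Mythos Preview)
Your proposal is correct and follows essentially the same route as the paper: rewrite the energy identity via Lemma~\ref{auxiliarygener} to see that $t\mapsto J(u(t))$ is nonincreasing, compare $J(u(t))$ with the scalar profile $g(\lambda)=\tfrac12\lambda^2-D_1\lambda^p$ coming from \eqref{D1gen}, use that $g$ has its maximum $E_1$ at $\lambda_1$ to exclude $\|\nabla u(t)\|_2=\lambda_1$, and conclude by continuity; your explicit treatment of the degenerate case $D_1\le 0$ is exactly what the paper means by ``there is nothing to prove when $D_1\le 0$''. The only cosmetic difference is that you invoke \eqref{eq:OQ2} to obtain nonnegativity of the boundary integral, whereas the paper appeals directly to (Q2) together with $Q(t,x,0)=0$; either is sufficient.
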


\begin{proof}
By Lemma~\ref{auxiliarygener} the energy identity \eqref{eq:4.25}
can be written as
\begin{equation}\label{enid7gen}
J(u(\tau))\Big |_s^t=-\int_s^t \int_{\Gamma_1} Q(\tau,\cdot,
u_t(\tau))\,u_t(\tau)\,d\tau-\int_s^t\|u_t\|_2^2\,d\tau.
\end{equation}
By \eqref{enid7gen} and (Q2) the energy function $E(t):=J(u(t))$ is
decreasing in $[0,T_{\text{max}})$. Hence \eqref{eq:et} continue to
hold. By \eqref{D1gen} we have $J(u(t))\ge \widetilde{g}(\|\nabla
u(t)\|_2)$, where $\widetilde{g}= \frac {\lambda^2}2-D_1 \lambda^p$
if $D_1>0$, while $\widetilde{g}= \frac {\lambda^2}2$ if $D_1\le 0$.
Then, when $D_1>0$, the same arguments used in the proof of
Lemma~\ref{rem} apply, while there is nothing to prove when $D_1\le
0$.
\end{proof}

We can now state the generalization of Theorem~\ref{Thglo}.

\begin{thm} Under  the assumptions of Lemma~\ref{gensu}
if $u_{0}\in W_{s}$ then $T_{max}=\infty$ and $u(t)\in W_{s}$ for
all $t\geq 0$.
\end{thm}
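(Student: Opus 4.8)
The plan is to argue exactly as for Theorem~\ref{Thglo}, replacing Theorem~\ref{Thalt} by its generalization Theorem~\ref{eq:T8} and Lemma~\ref{rem} by Lemma~\ref{gensu}. First I would apply Theorem~\ref{eq:T8} to obtain the weak maximal solution $u$ on $[0,T_{\max})\times\Omega$ together with the continuation dichotomy; it then suffices to show that, when $u_0\in W_s$, the blow-up alternative \eqref{bu} cannot occur, i.e. to bound $\|u(t)\|_{H^1_{\Gamma_0}}$ uniformly on $[0,T_{\max})$. Since $\sigma(\Gamma_0)>0$, the Poincar\'e-type inequality recalled at the beginning of Section~\ref{section4} makes $\|\nabla\cdot\|_2$ an equivalent norm on $H^1_{\Gamma_0}(\Omega)$, so it is enough to bound $\|\nabla u(t)\|_2$ uniformly, and then $\|u(t)\|_{H^1_{\Gamma_0}}\le C\|\nabla u(t)\|_2$.

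By Lemma~\ref{gensu} the set $W_s$ of \eqref{eq:Wsgen} is invariant under the flow, hence $u(t)\in W_s$ for all $t\in[0,T_{\max})$. When $D_1>0$ this gives directly $\|\nabla u(t)\|_2<\lambda_1<\infty$ on $[0,T_{\max})$, so $\sup_{t\in[0,T_{\max})}\|u(t)\|_{H^1_{\Gamma_0}}<\infty$, contradicting \eqref{bu}; therefore $T_{\max}=\infty$, and the invariance already used shows $u(t)\in W_s$ for all $t\ge 0$, which is the full conclusion.

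It remains to treat the degenerate case $D_1\le 0$, where $\lambda_1=E_1=+\infty$, $W_s=H^1_{\Gamma_0}(\Omega)$, and the bound above is vacuous: here one must show that \emph{every} solution is global. For this I would use that, by Lemma~\ref{auxiliarygener}, the energy identity \eqref{eq:4.25} can be recast as \eqref{enid7gen}, so that (Q2) together with \eqref{eq:OQ2} (which gives $Q(t,x,v)v\ge 0$) forces $t\mapsto J(u(t))$ to be nonincreasing; thus $J(u(t))\le J(u_0)$ on $[0,T_{\max})$. On the other hand $D_1\le 0$ means, by \eqref{D1gen}, that $\int_\Omega F(\cdot,v)\le 0$ for every $v\in H^1_{\Gamma_0}(\Omega)$, whence $J(v)\ge\tfrac12\|\nabla v\|_2^2$. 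Combining the two, $\|\nabla u(t)\|_2^2\le 2J(u_0)$ on $[0,T_{\max})$, and again the Poincar\'e inequality yields a uniform bound on $\|u(t)\|_{H^1_{\Gamma_0}}$, contradicting \eqref{bu}; so $T_{\max}=\infty$, and trivially $u(t)\in W_s$ for all $t\ge 0$.

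I do not anticipate a genuine obstacle: all the substantive work --- the invariance of $W_s$, the monotonicity of the energy $J$, and the continuation/blow-up dichotomy --- is already packaged in Lemma~\ref{gensu}, Lemma~\ref{auxiliarygener} and Theorems~\ref{eq:T7}--\ref{eq:T8}. The only point requiring care is bookkeeping the two regimes $D_1>0$ and $D_1\le 0$ coherently, and making sure that in the second regime the one-sided sign of $\int_\Omega F$ --- which is precisely what $D_1\le0$ encodes, and which is consistent with (F2) --- is what furnishes the a priori estimate.
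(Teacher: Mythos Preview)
Your proposal is correct and follows essentially the same approach as the paper: split into the cases $D_1>0$ and $D_1\le 0$, use Lemma~\ref{gensu} together with Theorem~\ref{eq:T8} and the Poincar\'e inequality in the first case, and in the second case exploit $J(v)\ge\tfrac12\|\nabla v\|_2^2$ combined with the monotonicity of $t\mapsto J(u(t))$. Your handling of the degenerate case $D_1\le 0$ is in fact more explicit than the paper's terse remark that ``$W_s$ is bounded,'' since you spell out that the bound comes from $\|\nabla u(t)\|_2^2\le 2J(u(t))\le 2J(u_0)$.
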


\begin{proof}
When $D_1>0$ we can exactly repeat the proof of Theorem~\ref{Thglo}
by using Lemma~\ref{gensu}. When  $D_1\le 0$ the same argument
applies since in this case we have $J(u)\ge \frac 12 \|\nabla
u\|_2^2$ so $W_s$ is bounded.
\end{proof}

In order to generalize Theorem~\ref{Thblow} we need to strengthen
assumption (Q1--2) to the following ones.
\renewcommand{\labelenumi}{(Q\arabic{enumi}$'$)}
\begin{enumerate}
\item $Q$ is a Carath\'eodory real function defined on
$(0,\infty)\times\Gamma_{1}\times\mathbb{R}$,
     $Q(t,x,0)=0$ for almost all $(t,x)\in(0,\infty)\times\Gamma_{1}$,
     and there exists exponents $1<\mu\le m$, a positive function $d$ such that $d, 1/d \in
L^\infty_{\text{loc}}([0,\infty))$,
      and positive constants $c'_1,c'_2,c'_3$ and $c'_4$  such that
\begin{alignat*}{2}
c'_{1}d(t)\left|v\right|^{m-1}\leq &\left|Q(t,x,v)\right|\leq
c'_{2}d(t)\left|v\right|^{m-1} &&\mbox{   when } \left|v\right|\geq
1,
\quad\text{and}\\
c'_{3}d(t)\left|v\right|^{\mu-1}\leq &\left|Q(t,x,v)\right|\leq
c'_{4}d(t) \left|v\right|^{\mu-1}  &&\mbox{ when }
\left|v\right|\leq 1
\end{alignat*}
for almost all $(t,x)\in(0,\infty)\times\Gamma_{1}$ and all
$v\in\mathbb{R}$.
\item The function $Q(t,x,\cdot)$ is increasing for almost all $(t,x)\in(0,\infty)\times\Gamma_{1}$.

\end{enumerate}

\begin{rem}
We remark that the nonlinearities $Q_0$ and $Q_1$ defined in
Remark~\ref{remark6} satisfy as well assumption (Q1$'$--2$'$).
Moreover when $Q=Q(v)$ these assumptions  reduce to assume that
$Q\in C(\R)$ is increasing and
$$ 0<\varliminf_{v\rightarrow
0}\frac{\left|Q(v)\right|}{\left|v\right|^{\mu-1}}\le
\varlimsup_{v\rightarrow
0}\frac{\left|Q(v)\right|}{\left|v\right|^{\mu-1}}<\infty , \qquad
0<\varliminf_{|v|\rightarrow\infty}\frac{\left|Q(v)\right|}{\left|v\right|^{m-1}}
\le\varlimsup_{|v|\rightarrow\infty}\frac{\left|Q(v)\right|}{\left|v\right|^{m-1}}
<\infty.$$ We also note, for future use,  some further consequences
of (Q1$'$) and (Q2$'$).  Since $Q(0)=0$, by (Q2) we have
$Q(t,x,v)v\ge 0$, so $Q(t,x,v)v=|Q(t,x,v)||v|$. Hence, when $|v|\ge
1$ we have
\begin{equation}\label{P1}
|Q(t,x,v)|\le c'_5 d^{1/m}(t)[Q(t,x,v)v]^{1/m'}
\end{equation}
while when $|v|\le 1$
\begin{equation}
|Q(t,x,v)|\le c'_6 d^{1/\mu}(t)[Q(t,x,v)v]^{1/\mu'}\label{P2}
\end{equation}
for almost all $(t,x)\in (0,\infty)\times \Gamma_1$, where  $c'_5$
and $c'_6$ are positive constants.
\end{rem}

In order to state our blow--up result for problem \eqref{eq:P} we
need a further specific structural assumption on $f$.
\renewcommand{\labelenumi}{(F3)}
\begin{enumerate}
  \item There is $\varepsilon_{0} > 0$ such that for all $\varepsilon\in\left(0,\varepsilon_{0}\right]$
  there exists $c_{11}=c_{11}(\varepsilon) > 0$ such that
\begin{equation*}
f(x,u)u-(p-\varepsilon)F(x,u)\geq c_{11}\left|u\right|^{p}
\end{equation*}
for almost all $x\in\Omega$ and all $u\in\mathbb{R}$.
\end{enumerate}

Clearly the model nonlinearity $f_0$ defined in \eqref{f0} satisfies
(F2--3) if and only if $a\le 0$. We can finally state
\begin{thm} \label{eq:T9}
Suppose that (Q1$'$--Q2$'$), (F1--3), \eqref{eq:pr} and
\eqref{eq:w4} hold. Moreover suppose that $\sigma(\Gamma_0)>0$,
$p>2$,
\begin{equation}\label{fivepointsstar}
\int^\infty \dfrac {dt}{d^{1/(m-1)}+d^{1/(\mu-1)}}=\infty
\end{equation}
and $u_0\in W_u$. Then the conclusions of Theorem~\ref{Thblow} hold.
\end{thm}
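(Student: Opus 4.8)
The plan is to rerun the proof of Theorem~\ref{Thblow}, replacing the tools tied to the model problem by the generalizations proved above and carefully tracking the time--dependent weight $d(t)$. If $D_1\le 0$ then $\lambda_1=E_1=+\infty$, so $W_u=\emptyset$ and there is nothing to prove; hence we may assume $D_1>0$. Arguing by contradiction we suppose $T_{\max}=\infty$. By Lemma~\ref{gensu}, $u(t)\in W_u$ for all $t\ge 0$, so $\|\nabla u(t)\|_2>\lambda_1$ and $J(u(t))<E_1$. Fix $E_2\in(J(u_0),E_1)$ and put $H(t)=E_2-J(u(t))$. Writing \eqref{eq:4.25} in the form \eqref{enid7gen} via Lemma~\ref{auxiliarygener}, assumption (Q2$'$) gives $H'(t)=\int_{\Gamma_1}Q(t,\cdot,u_t)u_t+\|u_t\|_2^2\ge 0$, so $H$ is nondecreasing and $H(t)\ge H(0)>0$. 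Since $J(u(t))<E_1\le\tfrac{p-2}{2p}\|\nabla u(t)\|_2^2$ we get $\int_\Omega F(\cdot,u(t))>\tfrac1p\|\nabla u(t)\|_2^2$, which together with (F2) yields $\|\nabla u(t)\|_2^2<c_{10}\|u(t)\|_p^p$; and, since $E_2<E_1$ and $\|\nabla u(t)\|_2>\lambda_1$, arguing as for \eqref{eq:Ht} one also gets $H(t)<\tfrac{c_{10}}{p}\|u(t)\|_p^p$, so that $\|u(t)\|_p\ge(pH(0)/c_{10})^{1/p}>0$.

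By \eqref{eq:pr} and \eqref{eq:w4} we have $m<1+2^*/2$, the critical exponent of the trace embedding $H^1(\Omega)\hookrightarrow L^q(\partial\Omega)$, so $u(t)_{|\Gamma_1}\in L^m(\Gamma_1)\subseteq L^\mu(\Gamma_1)$ (recall $\mu\le m$ and $\sigma(\Gamma_1)<\infty$), and I may take $\phi=u(t)$ in \eqref{eq:DEF.1}, obtaining $\int_\Omega f(\cdot,u)u=\|\nabla u\|_2^2+\int_{\Gamma_1}Q(t,\cdot,u_t)u+(u_t,u)$. Substituting $\int_\Omega F(\cdot,u)=\tfrac12\|\nabla u\|_2^2-E_2+H(t)$ into (F3), choosing $\varepsilon\in(0,\varepsilon_0]$ so small that $(p-\varepsilon)E_2<\tfrac{p-2-\varepsilon}{2}\lambda_1^2$ (possible since $2E_1=\tfrac{p-2}{p}\lambda_1^2$ and $E_2<E_1$), and using $\|\nabla u(t)\|_2^2>\lambda_1^2$ and that the coefficient of $\|\nabla u\|_2^2$ is negative, I arrive at
\[
c_{11}\|u(t)\|_p^p+(p-\varepsilon)H(t)\le\Big|\int_{\Gamma_1}Q(t,\cdot,u_t)u\Big|+|(u_t,u)|.
\]
To estimate the right--hand side, split $\Gamma_1=\Gamma_1^+\cup\Gamma_1^-$, where $\Gamma_1^+=\{x\in\Gamma_1:|u_t(t,x)|\ge 1\}$ and $\Gamma_1^-=\Gamma_1\setminus\Gamma_1^+$. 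By \eqref{P1}, \eqref{P2} and H\"older's inequality, and since $\int_{\Gamma_1}Q(t,\cdot,u_t)u_t\le H'(t)$,
\[
\Big|\int_{\Gamma_1^+}Qu\Big|\le C\,d^{1/m}(t)\,H'(t)^{1/m'}\|u\|_{m,\Gamma_1},\qquad
\Big|\int_{\Gamma_1^-}Qu\Big|\le C\,d^{1/\mu}(t)\,H'(t)^{1/\mu'}\|u\|_{m,\Gamma_1},
\]
the second one using also $\|u\|_{\mu,\Gamma_1}\le C\|u\|_{m,\Gamma_1}$. The fractional--order trace embedding and interpolation inequalities \eqref{es1}--\eqref{eq:w2}, with $s$ satisfying \eqref{85}, \eqref{86bis}, \eqref{87} (the constraints for $m$ being the binding ones, since $\mu\le m$), together with $\|u\|_2\le C\|u\|_p$ and $\|\nabla u(t)\|_2^2\le c_{10}\|u(t)\|_p^p$, give $\|u\|_{m,\Gamma_1}\le C\|u(t)\|_p^{\theta}$ with $\theta:=1-s+\tfrac{ps}{2}\in(1,p/m)$; finally $|(u_t,u)|\le\|u_t\|_2\|u\|_2\le C\,H'(t)^{1/2}\|u(t)\|_p$.

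Inserting these bounds and absorbing the powers $\|u(t)\|_p^{\theta}$ and $\|u(t)\|_p$ into $c_{11}\|u(t)\|_p^p$ by weighted Young's inequality (admissible because $1<\theta<p/m\le p$ and $p>2$), then using $\|u(t)\|_p^p\ge\tfrac{p}{c_{10}}H(t)$ and $H(t)\ge H(0)>0$, one obtains, for all $t\ge 0$, a differential inequality
\[
H'(t)\ge\frac{c\,H(t)^{\gamma}}{d(t)^{1/(m-1)}+d(t)^{1/(\mu-1)}+1}
\]
for some $\gamma>1$; here the exponents $1/(m-1)=m'/m$ and $1/(\mu-1)=\mu'/\mu$ arise because $H'$ enters the super-- and sub--unit bounds to the powers $1/m'$ and $1/\mu'$ and one then solves for $H'$, while the bounded term $+1$ comes from $(u_t,u)$ and, since $d,1/d\in L^\infty_{\text{loc}}([0,\infty))$, does not affect the divergence in \eqref{fivepointsstar}. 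Separating variables and integrating over $[0,\infty)$, the left--hand side is $\le\tfrac{1}{(\gamma-1)H(0)^{\gamma-1}}<\infty$ since $H$ is nondecreasing and $\gamma>1$, while the right--hand side is infinite by \eqref{fivepointsstar}, a contradiction. Hence $T_{\max}<\infty$; then $u(t)\in W_u$ for all $t\in[0,T_{\max})$ by Lemma~\ref{gensu}, and by the continuation alternative of Theorem~\ref{eq:T8} we have $\|\nabla u(t)\|_2\to\infty$ as $t\to T_{\max}^-$, so $J(u(t))\ge\tfrac12\|\nabla u(t)\|_2^2-D_1\|\nabla u(t)\|_2^p\to-\infty$ by \eqref{D1gen} (here $p>2$, $D_1>0$), whence $H(t)\to\infty$ and $\|u(t)\|_p^p\ge\tfrac{p}{c_{10}}H(t)\to+\infty$. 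The delicate point is precisely the bookkeeping compressed into this last paragraph: checking that the H\"older and weighted Young manipulations, now carrying the two exponents $\mu\le m$ and the non--autonomous weight $d(t)$, really yield a differential inequality whose $d$--dependence is exactly the one for which \eqref{fivepointsstar} forces a contradiction --- this is the non--autonomous, two--exponent version of the mixing of the technique of \cite{lps1} with the trace estimate of \cite{stable}.
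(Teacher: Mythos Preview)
Your argument follows the paper's own proof almost step for step: the same auxiliary function $H=E_2-J(u)$, the same splitting of $\int_{\Gamma_1}Q(t,\cdot,u_t)\,u$ through \eqref{P1}--\eqref{P2}, the same trace--interpolation estimate \eqref{eq:w2} under the constraints \eqref{85}, \eqref{86bis}, \eqref{87}, and the same final differential inequality for $H$ integrated against \eqref{fivepointsstar}. One slip: after substituting $\int_\Omega F(\cdot,u)=\tfrac12\|\nabla u\|_2^2-E_2+H$ into (F3), the coefficient of $\|\nabla u\|_2^2$ is $\tfrac{p-2-\varepsilon}{2}$, which is \emph{positive} (not negative) for small $\varepsilon$; it is precisely its positivity, together with $\|\nabla u\|_2^2>\lambda_1^2$ and your choice of $\varepsilon$, that makes that block nonnegative and allows you to discard it. The displayed inequality you reach is correct.

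There is a minor organizational difference worth noting. You invoke (F2) and (F3) early, reducing everything to $\|u\|_p$ via $\|\nabla u\|_2^2<c_{10}\|u\|_p^p$, whereas the paper keeps $\|\nabla u\|_2^s$ as a separate factor in the weighted Young inequality --- this is exactly what makes the exponent on $d$ come out as $m'/m=1/(m-1)$ in one clean step, via $(d^{1/m}X^{1/m'})^{m'}=d^{1/(m-1)}X$ --- and only applies (F3) at the very end (see \eqref{u158}--\eqref{u159}). Both routes lead to the same differential inequality. Your explicit ``$+1$'' in the denominator, coming from the $(u_t,u)$ term, is honest; the paper's write-up absorbs that contribution into $C_8(\delta)\,[d^{1/(m-1)}+d^{1/(\mu-1)}]\,H'$ without comment. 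Your justification that the ``$+1$'' does not affect the divergence in \eqref{fivepointsstar} ``since $d,1/d\in L^\infty_{\mathrm{loc}}$'' is not quite the right reason (local bounds give no global lower bound on $d$); but this is the same level of detail as in the paper's own argument.
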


\begin{rem} Assumption \eqref{fivepointsstar} needs some
comment, as it express the possible time--behavior of $Q$. When
$d(t)=(1+t)^\beta$, $\beta\in\R$, it reduces to $\beta\le \mu-1$,
and in particular when $\mu=m$ in assumption (Q1$'$) (what happens
for example when $Q(v)=d(t)|v|^{m-2}v$), it reduces to $\beta\le
m-1$, which is a well--known optimal assumption to prevent
over--damping for time dependent damping terms in ordinary
differential systems.
\end{rem}

\begin{proof}
As in the proof of Theorem \ref{Thblow} we prove, by contradiction,
that there are no solutions in the whole $(0,\infty)\times\Omega$.
We fix $E_{2}\in(J(u_{0}),E_{1})$ and set $H$ by \eqref{eq:Ht1}. By
using Lemma~\ref{gensu} and \eqref{D1gen} we get a slightly
generalized version of \eqref{eq:Ht}, that is
\begin{equation}H(t)\le \frac
{c_{10}}p\|u(t)\|_p^p.\label{doppiascure} \end{equation} By (Q2$'$)
formula \eqref{Hprime} is now generalized to
\begin{equation}H'(t)=\int_{\Gamma_1}Q(t,\cdot,u_t)u_t+\|u_t(t)\|_2^2\geq
\int_{\Gamma_1}Q(t,\cdot,u_t)u_t\ge 0 \label{Hprimegen}
\end{equation}
so that \eqref{eq:w6} holds true. By \eqref{eq:OQ1} we can again
take $\phi=u_t$ in the distribution identity \eqref{eq:DEF.1} so
getting the following generalized version of \eqref{eq:w3}
\begin{equation}
\int_{\Omega}f(\cdot,u)u-\left\|\nabla
u\right\|^{2}_{2}=\int_{\Gamma_{1}}Q(\cdot,\cdot,u_{t})u+(u_{t},u)
\label{eq:4.29}
\end{equation}
The estimate \eqref{89} of the second term in the right hand side of
\eqref{eq:4.29} keeps unchanged, while the estimate the first term
in it needs a more detailed explanation. We use \eqref{P1},
\eqref{P2} and H\"{o}lder inequality twice to get
\begin{equation*}
\begin{aligned}
I_1:=&\left|\int_{\Gamma_{1}}Q(\cdot,\cdot,u_{t})u\right|\\
\le & c'_5d^{1/m}\int_{\{x\in \Gamma_1: |u_t|\ge
1\}}\negqquad\negqquad[Q(t,\cdot,u_t)u_t]^{1/m'}|u| +
c'_6d^{1/\mu}\int_{\{x\in
\Gamma_1: |u_t|\le 1\}}\negqquad\negqquad[Q(t,\cdot,u_t)u_t]^{1/\mu'}|u|\\
 \le & (c'_5+c'_6)\left[d^{1/m}\left(\int_{\Gamma_1}
Q(t,\cdot,u_t)u_t]\right)^{1/m'}+d^{1/\mu}\left(\int_{\Gamma_1}
Q(t,\cdot,u_t)u_t]\right)^{1/\mu'}\right]\|u\|_{m,\Gamma_1}\label{Qestimate1}
\end{aligned}
\end{equation*}
which generalizes \eqref{eq:w1}. Now we estimate
$\|u\|_{m,\Gamma_1}$ in previous formula by using \eqref{eq:w2}. In
this way we obtain
$$I_1\le C_3\left[d^{1/m}\left(\int_{\Gamma_1}
Q(t,\cdot,u_t)u_t]\right)^{1/m'}+d^{1/\mu}\left(\int_{\Gamma_1}
Q(t,\cdot,u_t)u_t]\right)^{1/\mu'}\right]\|u\|_2^{1-s}\|\nabla
u\|_2^s$$ for exponents $s$ satisfying \eqref{85} (generalizing
\eqref{458}). The same arguments used in the proof of
Theorem~\ref{Thblow} then give, for any $\delta>0$,
$$
\begin{aligned}
I_1\le & C_3\left[C_4(\delta)d^{1/(m-1)}\int_{\Gamma_1}
Q(t,\cdot,u_t)u_t+\delta \|u\|_2^2+\delta
\|u\|_p^p\right]\|u\|^{1-s-p(1/m-s/2)}_{p}\\
+& C_3\left[C_4(\delta)d^{1/(\mu-1)}\int_{\Gamma_1}
Q(t,\cdot,u_t)u_t+\delta \|u\|_2^2+\delta
\|u\|_p^p\right]\|u\|^{1-s-p(1/\mu-s/2)}_{p}
\end{aligned}
$$
provided also \eqref{86bis} (and consequently $s<2/\mu$ as well)
holds. By \eqref{eq:w6} and \eqref{doppiascure} we have
$\|u\|_p^p\geq \left(\frac p{c_{10}}H(0)\right)^{1/p}$, so from
previous formula we derive, as $\mu\le m$,
$$I_1\le C'_3 \left\{C_4(\delta) \left[d^{\frac 1{m-1}}+d^{\frac 1{\mu-1}}\right]\int_{\Gamma_1}
\negquad Q(t,\cdot,u_t)u_t +\delta \|u\|_2^2+\delta
\|u\|_p^p\right\}\|u\|^{-p\bar{\alpha_s}}_{p},
$$
where $C'_3=C'_3(p,m,s,\Omega,H(0))>0$ and
$\bar{\alpha}_{s}=-\left[1-s-p(1/m-s/2)\right]/p
> 0$, generalizing \eqref{88}. By plugging the last estimate and
\eqref{89} in \eqref{eq:4.29} and using \eqref{Hprimegen} we get
$$\int_{\Omega}f(\cdot,u)u-\left\|\nabla u\right\|^{2}_{2} \leq
C_{8} (\delta)\left[d^{\frac 1{m-1}}+d^{\frac 1{\mu-1}}\right]H'(t)
\|u\|^{-p\bar{\beta}_{s}}_{p}+C_{9}\delta\left[\left\|\nabla
u\right\|^{2}_{2}+ \left\|u\right\|^{p}_{p}\right]
$$
so generalizing \eqref{104}. Consequently, by \eqref{eq:Ht1} and
\eqref{defJgen} we have, for any $\varepsilon>0$,
\begin{multline*}
\int_{\Omega}f(\cdot,u)u+(p-\varepsilon)H(t)-(p-\varepsilon)E_2+\left[\frac
{p-\varepsilon}2-(1+C_9\delta)\right]\|\nabla
u\|_2^2\\-(p-\varepsilon)\int_\Omega F(\cdot, u)-C_9\delta \|u\|_p^p
\le C_{8} (\delta)\left[d^{\frac 1{m-1}}+d^{\frac
1{\mu-1}}\right]H'(t) \|u\|^{-p\bar{\beta}_{s}}_{p},
\end{multline*}
where $\bar{\beta}_s$ is given by \eqref{parametri}. Then, using
assumption (F3) for $\varepsilon\in (0,\varepsilon_0]$  we have
\begin{multline}\label{u158}
\left[c_{11}(\varepsilon)-C_9\delta\right]\|u\|_p^p+\left[\frac
{p-\varepsilon}2-(1+C_9\delta)\right]\|\nabla
u\|_2^2+(p-\varepsilon)H-(p-\varepsilon)E_2\\ \le C_{8}
(\delta)\left[d^{\frac 1{m-1}}+d^{\frac 1{\mu-1}}\right]H'(t)
\|u\|^{-p\bar{\beta}_{s}}_{p}.
\end{multline}
By Lemma~\ref{gensu} we have $\|\nabla u\|_2\ge \lambda_1$, so by
\eqref{DefE1} we get
\begin{equation}\label{u159}\left[\frac
{p-\varepsilon}2-(1+C_9\delta)\right]\|\nabla
u\|_2^2-(p-\varepsilon)E_2\ge \left(-C_9\delta -\frac\varepsilon
p\right)\lambda_1^2+ (p-\varepsilon)(E_1-E_2).
\end{equation}
 We fix
$\varepsilon=\eps_1$ small enough in order to have $\frac \eps p
\lambda_1^2<\frac {p-\eps}2 (E_1-E_2)$. After that we fix
$\delta=\delta_1$ such that $\frac{C_9\delta}2<\frac
{p-\eps_1}2(E_1-E_2)$ and $c_{11}(\eps_1)-C_9\delta>0$.
Consequently, from \eqref{u158} and \eqref{u159} we obtain
$$
(p-\varepsilon_1)H(t)\le C_{8} (\delta_1)\left[d^{\frac
1{m-1}}+d^{\frac 1{\mu-1}}\right]H'(t) \|u\|^{-p\bar{\beta}_{s}}_{p}
$$
generalizing \eqref{fest}. By \eqref{doppiascure} we finally obtain
$H'(t)\ge C_9 \left[d^{\frac 1{m-1}}+d^{\frac
1{\mu-1}}\right]H^{1+\bar{\beta}_s}(t)$ which generalizes
\eqref{fest2}. By integrating and using assumption
\eqref{fivepointsstar} we get the desired contradiction and conclude
the proof.
\end{proof}

\appendix
\section{A physical model} \label{eq:appA}
This section is devoted to describe a physical model which motivates
problem (\ref{eq:P}). Let $\Omega$ represent a solid body surrounded
by a fluid denoted by $A$, whit contact  $\Gamma_1$ and (possibly)
having an internal cavity with contact boundary $\Gamma_0$. We
suppose that a heat reaction-diffusion process occurs inside
$\Omega$ such that, if $u=u(t,x)$ represents the temperature at
point $x$ and time $t$, the quantity of heat produced by the
reaction is proportional to a superlinear power of the temperature,
i.e. to $u^{p-1}$ with $p
> 2$. Thus the process can be modelled by the heat equation with
source
\begin{equation}
u_{t}-\rho\Delta u=\left|u\right|^{p-2}u \mbox{   in
}(0,T)\times\Omega \label{eq:A1}
\end{equation}
where the thermal conductivity $\rho > 0$ is taken to be 1 for
simplicity. The surrounding fluid is supposed to be a perfect
conductor of heat, so the temperature in $A$ is spatially
homogeneous and can be described by a number $v=v(t)$ for any $t\geq
0$. In particular, there is no diffusion in the fluid. Such
assumption is realistic if the fluid is well stirred. Moreover, we
introduce a refrigerating process in the fluid with the help of
which one tries to control the reaction inside the solid $\Omega$.
We assume that the refrigerating system is controlled in such a way
that the heat absorbed from the fluid is proportional to a power of
the rate of change of the temperature, as
$\left|v'(t)\right|^{m-2}v'(t)$. Let $j=j(t,x)$ be the heat flux
from $\Omega$ to $A$. Then the rate of change of the temperature
$v'(t)$ is given by $ v'(t)=-\left|v'(t)\right|^{m-2}v(t)+
\int_{\Gamma_1}j(t,x)dS$. On the other hand, the heat flux $j(t,x)$
is given by the classical conductivity rule by
$j(t,x)=-\dfrac{\partial u}{\partial\nu}$, since $\rho=1$. Finally,
the thermal contact of the fluid at $\Gamma_{1}$ yields the
continuity condition $u(t,x)=v(t),\mbox{ }x\in\Gamma_{1},\mbox{
}t\geq 0$, while the temperature on $\Gamma_{0}$ is assumed to be
constant (for simplicity constantly vanishing), that is
\begin{equation}
u(t,x)=0,\mbox{   }x\in\Gamma_{0},\mbox{   }t\geq 0. \label{eq:A5}
\end{equation}
Combining (\ref{eq:A1})-(\ref{eq:A5}), we obtain (\ref{eq:P}) with
$f=\left|u\right|^{p-2}u$ and
 $Q=u_{t}+\left|u_{t}\right|^{m-2}u_{t}$. These nonlinear terms are included in theory developed in Section~\ref{generalizations}.
 In particular Theorem \ref{eq:T9} shows that the refrigerating system cannot avoid the internal explosion with this conditions.

\section{Global existence for problem \eqref{Pmod}  when $p=2$} \label{appendicep2}

This section is devoted to state and prove the  global existence
result for problem \eqref{Pmod} when $p=2$ mentioned in the
Introduction. For the sake of generality we actually shall prove a
more general version of it dealing with problem \eqref{eq:P}.

\begin{thm}\label{easytheorem}
Under the assumptions of Theorem~\ref{eq:T8} if $p=2$ then
$T_{\text{max}}=\infty$.
\end{thm}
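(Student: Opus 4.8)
The plan is to derive, under the contradiction hypothesis $T_{\max}<\infty$, an a priori bound on $\|u(t)\|_{H^1_{\Gamma_0}}$ valid on the whole maximal interval $[0,T_{\max})$, which is incompatible with the blow--up alternative \eqref{bu} of Theorem~\ref{Thalt} (available here through Theorem~\ref{eq:T8}), forcing $T_{\max}=\infty$. The key structural simplification when $p=2$ is that the growth bound \eqref{eq:OF1} degenerates to the \emph{linear} estimate $|f(x,u)|\le 2c_8|u|$, so the source appears on the right--hand side of the energy identity only through the product $\|u\|_2\,\|u_t\|_2$, with no superlinear power of the energy norm.

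Concretely, on any $[0,T']$ with $T'<T_{\max}$ the generalized energy identity \eqref{eq:4.25} holds. Since $Q(t,x,v)v\ge0$ by \eqref{eq:OQ2}, dropping the boundary dissipation term and inserting $|f(x,u)|\le 2c_8|u|$ followed by a weighted Young inequality yields
\begin{equation*}
\tfrac12\|\nabla u(t)\|_2^2+\tfrac12\int_0^t\|u_t\|_2^2\le\tfrac12\|\nabla u_0\|_2^2+2c_8^2\int_0^t\|u(s)\|_2^2\,ds.
\end{equation*}
Writing $u(t)=u_0+\int_0^t u_t$ and using H\"older's inequality in time gives $\|u(t)\|_2^2\le 2\|u_0\|_2^2+2T_{\max}\int_0^t\|u_t\|_2^2$; substituting this into the displayed inequality and abbreviating $A(t)=\int_0^t\|u_t\|_2^2$ produces $A(t)\le a+b\int_0^t A(s)\,ds$ for constants $a,b$ depending only on $\|u_0\|_{H^1_{\Gamma_0}}$, $c_8$ and $T_{\max}$. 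Gronwall's inequality then bounds $A$ on $[0,T_{\max})$, and feeding this back bounds in turn $\|u(t)\|_2^2$ and $\|\nabla u(t)\|_2^2$, hence $\|u(t)\|_{H^1_{\Gamma_0}}$, uniformly on $[0,T_{\max})$, contradicting \eqref{bu}.

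There is no genuine obstacle here; the only point requiring care is bookkeeping: one must make sure the energy identity is available on every $[0,T']\subset[0,T_{\max})$ (which it is, by the local theory of Theorem~\ref{eq:T7}) and that every constant entering the Gronwall step depends solely on $u_0$, $c_8$ and the assumed--finite $T_{\max}$, so that the resulting bound is genuinely uniform up to $T_{\max}$. In particular no restriction on the boundary nonlinearity $Q$ — and a fortiori no upper bound on $m$ of the type \eqref{eq:w4} — is needed, because $Q$ enters the argument only through the favourable sign of $Q(t,x,u_t)u_t$.
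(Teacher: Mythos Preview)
Your proposal is correct and follows essentially the same route as the paper: contradiction from the blow--up alternative, the energy identity \eqref{eq:4.25} with the $Q$--term discarded by sign, the linear bound $|f(x,u)|\le 2c_8|u|$ from \eqref{eq:OF1} at $p=2$, weighted Young, the estimate $\|u(t)\|_2^2\le 2\|u_0\|_2^2+2T_{\max}\int_0^t\|u_t\|_2^2$, and Gronwall on $A(t)=\int_0^t\|u_t\|_2^2$. Even the constants and the order in which the bounds are fed back match the paper's argument.
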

\begin{proof}
We suppose by contradiction that $T_{\text{max}}<\infty$. By
\eqref{eq:4.25} together with assumptions (Q1--2) and \eqref{eq:OF1}
we have
$$\frac 12 \|\nabla u\|_2^2+\int_0^t \|u_t\|_2^2\le \frac 12 \|\nabla
u_0\|_2^2+2c_8 \int_0^t \int_\Omega |u||u_t|.$$ By H\"{o}lder and
weighted Young inequalities we consequently get
$$\frac 12 \|\nabla u\|_2^2+\int_0^t \|u_t\|_2^2\le \frac 12 \|\nabla
u_0\|_2^2+2c_8^2 \int_0^t \|u\|_2^2+\frac 12 \int_0^t\|u_t\|_2^2$$
and consequently \begin{equation}\label{appendiceC1}\|\nabla
u\|_2^2+\int_0^t \|u_t\|_2^2\le \|\nabla u_0\|_2^2+4c_8^2 \int_0^t
\|u\|_2^2.
\end{equation}
 Moreover, by integrating and using H\"{o}lder inequality
in time we have
\begin{equation}\label{appendiceC2}
\|u\|_2^2\leq \left(\|u_0\|_2+\int_0^t\|u_t\|_2\right)^2\le
2\|u_0\|_2^2+2T_{\text{max}}\int_0^t \|u_t\|_2^2.
\end{equation}
Combining \eqref{appendiceC1} and \eqref{appendiceC2} we get
$$\int_0^t \|u_t\|_2^2\le \|\nabla u_0\|_2^2+8T_{\text{max}}c_8^2
\left( \|u_0\|_2^2+\int_0^t ds \int_0^s
\|u_t(\tau)\|_2^2\,d\tau\right).$$ By Gronwall inequality we then
get that $\int_0^t \|u_t\|_2^2$ is bounded up to $T_{\text{max}}$.
By \eqref{appendiceC2} we consequently get that also $\|u\|_2$ is
bounded. Hence, by \eqref{appendiceC1} also $\|\nabla u\|_2$ is
bounded. So we contradict \eqref{bu} and conclude the proof.
\end{proof}

\section{Proof of Lemma \ref{eq:l1}}\label{Appendice A}

At first we denote
    $H=L^{2}(\Omega)$, $V=H^{1}_{\Gamma_{0}}(\Omega)$ and $W=L^{m}(\Gamma_{1})$.
Since $V$ is dense in $H$, using \cite[Theorem 2.1]{strauss} and
$(\ref{eq:b1})$, $(\ref{eq:b2})$, we obtain that
\begin{equation}
u\in C_{w}(\left[0,T\right];V). \label{eq:b9}
\end{equation}

The key point is to show that the energy identity holds. With this
aim and fixed $0\leq s\leq t\leq T$, we set $\theta_{0}$ to be the
characteristic function of the interval $\left[s,t\right]$. For
small $\delta > 0$, let $\theta(\tau)=\theta_{\delta}(\tau)$ be 1
for $\tau\in\left[s+\delta,t-\delta\right]$, zero for
$\tau\notin(s,t)$ and linear in the intervals
$\left[s,s+\delta\right]$ and $\left[t-\delta,t\right]$. Next let
$\eta_{\varepsilon}$ be a standard mollifying sequence, that is,
$\eta=\eta_{\varepsilon}\in C^{\infty}(\mathbb{R})$, supp
$\eta_{\varepsilon}\subset(-\varepsilon,\varepsilon)$,
$\int^{\infty}_{-\infty}\eta_{\varepsilon}=1$, $\eta_{\varepsilon}$
even and nonnegative, and
$\eta_{\varepsilon}=\varepsilon^{-1}\eta_{1}(\tau/\varepsilon)$. Let
$\ast$ denote time convolution. We approximate $u$, extended as zero
outside $\left[0,T\right]$, with $v=\eta\ast(\theta u)\in
C^{\infty}_{c}(\mathbb{R};V)$. Then
\begin{equation}
0=\int^{+\infty}_{-\infty}\frac{1}{2}\frac{d}{dt}\left\|\nabla
v\right\|^{2}_{2}=\int^{+\infty}_{-\infty}(\nabla v,\nabla v_{t}).
\label{eq:b3}
\end{equation}
Using standard convolution properties and the Leibnitz rule, we see
that
    $v_{t}=\eta\ast(\theta'u)+\eta\ast(\theta u_{t})$ in $H$,
so that $\eta\ast(\theta u_{t})\in C^{\infty}_{c}(\mathbb{R};V)$.
Then, by $(\ref{eq:b3})$,
\begin{equation}
0=\int^{+\infty}_{-\infty}(\eta\ast(\theta\nabla
u),\eta\ast(\theta'\nabla u))+
\int^{+\infty}_{-\infty}(\eta\ast(\theta\nabla
u),\nabla(\eta\ast(\theta  u_{t}))). \label{eq:b5}
\end{equation}
Using $(\ref{eq:b2})$ and the fact that $u_{t}\in
L^{m}((0,T)\times\partial\Omega)$ we can take
$\phi=\eta\ast\eta\ast(\theta u_{t})$ in $(\ref{eq:b4})$. Then,
multiplying by $\theta$, integrating from $-\infty$ to $\infty$ and
using standard properties of convolution, we can evaluate the second
term in $(\ref{eq:b5})$ in the following way:
\begin{equation}\label{eq:b6}
\begin{aligned}
\int^{+\infty}_{-\infty}\negqquad(\eta\ast(\theta\nabla
u),\nabla(\eta\ast(\theta  u_{t}))) & =
 \int^{+\infty}_{-\infty}\negquad\int_{\Gamma_{1}}\eta\ast(\theta\zeta)\eta\ast(\theta u_{t}) \\
& +  \int^{+\infty}_{-\infty}\!\!\int_{\Omega}\eta\ast(\theta
g)\eta\ast(\theta u_{t})
 -  \int^{+\infty}_{-\infty}\!\!\left\|\eta\ast(\theta
u_{t})\right\|^{2}_{2}
\end{aligned}
\end{equation}
Combining $(\ref{eq:b5})$ and $(\ref{eq:b6})$, and recalling that
$\theta=\theta_{\delta}$, we obtain the first approximate energy
identity
\begin{equation}\label{eq:b7}
\begin{aligned}
0 & =  \int^{+\infty}_{-\infty}(\eta\ast(\theta_{\delta}\nabla
u),\eta\ast(\theta'_{\delta}\nabla u))
 -  \int^{+\infty}_{-\infty}\left\|\eta\ast(\theta_{\delta} u_{t})\right\|^{2}_{2} \\
& +
\int^{+\infty}_{-\infty}\int_{\Gamma_{1}}\eta\ast(\theta_{\delta}\zeta)\eta\ast(\theta_{\delta}
u_{t})
 +  \int^{+\infty}_{-\infty}\int_{\Omega}\eta\ast(\theta_{\delta} g)\eta\ast(\theta_{\delta} u_{t}) \\
& :=  I_{1}+I_{2}+I_{3}+I_{4}.
\end{aligned}
\end{equation}
Now we examine each term in $(\ref{eq:b7})$ separately as
$\delta\rightarrow 0$ and $\varepsilon$ $(\mbox{i.e. }\eta)$ is
fixed. Since $\theta_{\delta}\rightarrow\theta_{0}$ a.e. and
\begin{alignat*}{2}
&\left\|\eta\ast(\theta_{\delta}\zeta)\right\|_{m',\Gamma_{1}}\leq\left\|\zeta\right\|_{m',\Gamma_{1}},
&\qquad& \left\|\eta\ast(\theta_{\delta}
u_{t})\right\|_{m',\Gamma_{1}}\leq\left\|u_{t}\right\|_{m',\Gamma_{1}}\\
&\left\|\eta\ast(\theta_{\delta}
g)\right\|_{2}\leq\left\|g\right\|_{2} &\qquad &
\left\|\eta\ast(\theta_{\delta}
u_{t})\right\|_{2}\leq\left\|u_{t}\right\|_{2},
\end{alignat*}
using $(\ref{eq:b8})$, $(\ref{eq:b2})$ and Lebesgue Dominated
Convergence Theorem we get the convergences
\begin{equation}\label{eq:b10}
\begin{aligned}
-I_{2}\rightarrow &
\int^{+\infty}_{-\infty}\left\|\eta\ast(\theta_{0}
u_{t})\right\|^{2}_{2}\\
I_{3}\rightarrow &
\int^{+\infty}_{-\infty}\int_{\Gamma_{1}}\eta\ast(\theta_{0}\zeta)\eta\ast(\theta_{0}
u_{t})\\
I_{4}\rightarrow
&\int^{+\infty}_{-\infty}\int_{\Omega}\eta\ast(\theta_{0}
g)\eta\ast(\theta_{0} u_{t}).
\end{aligned}
\end{equation}
Next we decompose the term $I_{1}$ as
\begin{equation*}
\begin{aligned}
I_{1}  = & \int^{+\infty}_{-\infty}(\eta\ast(\theta_{0}\nabla
u),\eta\ast(\theta_{\delta}'\nabla u))+
\int^{+\infty}_{-\infty}(\eta\ast
\left[(\theta_{\delta}-\theta_{0})\nabla u\right],\eta\ast(\theta_{\delta}'\nabla u)) \\
 := & I_{5}+I_{6}
\end{aligned}
\end{equation*}
Since $\theta_{\delta}\rightarrow\theta_{0}$ in $L^{1}(\mathbb{R})$,
by $(\ref{eq:b1})$ we have that
$\eta\ast\left[(\theta_{\delta}-\theta_{0})\nabla
u\right]\rightarrow 0$ strongly in $L^{\infty}(0,T;H)$. Moreover, by
$(\ref{eq:b1})$,
\begin{eqnarray*}
\left\|\eta\ast(\theta'_{\delta}\nabla u)\right\|_{L^{1}(0,T;H)} & \leq & \left\|\theta'_{\delta}\right\|_{L^{1}(\mathbb{R})}\left\|\eta\right\|_{L^{\infty}(\mathbb{R})}\left\|\nabla u\right\|_{L^{\infty}(0,T;H)}\nonumber\\
& \leq & 2\left\|\eta\right\|_{L^{\infty}(\mathbb{R})}\left\|\nabla
u\right\|_{L^{\infty}(0,T;H)},
\end{eqnarray*}
so that $I_{6}\rightarrow 0\mbox{   as }\delta\rightarrow 0$. Next
we note that, by the properties of convolution and the specific form
of $\theta_{\delta}$,
\begin{eqnarray*}
I_{5} & = & \int^{+\infty}_{-\infty}\theta'_{\delta}(\eta\ast\eta\ast(\theta_{0}\nabla u),\nabla u)\nonumber\\
& = &
\frac{1}{\delta}\int^{s+\delta}_{s}(\eta\ast\eta\ast(\theta_{0}\nabla
u),\nabla u)
-\frac{1}{\delta}\int^{t}_{t-\delta}(\eta\ast\eta\ast(\theta_{0}\nabla
u),\nabla u).
\end{eqnarray*}
By $(\ref{eq:b9})$, the function $(\eta\ast\eta\ast(\theta_{0}\nabla
u),\nabla u)$ is continuous, so
\begin{equation}
I_{5}\rightarrow (\eta\ast\eta\ast(\theta_{0}\nabla u)(s),\nabla
u(s)) -(\eta\ast\eta\ast(\theta_{0}\nabla u)(t),\nabla
u(t))\quad\text{as $\delta\rightarrow 0$.} \label{eq:b11}
\end{equation}

Combining the convergences \eqref{eq:b10}-\eqref{eq:b11} with
\eqref{eq:b7}, recalling that $\eta=\eta_{\varepsilon}$ and letting
$\rho_{\varepsilon}=\eta_{\varepsilon}\ast\eta_{\varepsilon}$, we
obtain the second approximate energy identity
\begin{equation}\label{eq:b12}
\begin{aligned}
\left.(\rho_{\varepsilon}\ast(\theta_{0}\nabla u),\nabla
u)\right|^{t}_{s}
 = & \int^{+\infty}_{-\infty}\int_{\Gamma_{1}}\eta_{\varepsilon}\ast(\theta_{0}\zeta)\eta_{\varepsilon}\ast(\theta_{0} u_{t}) \\
 + & \int^{+\infty}_{-\infty}\int_{\Omega}\eta_{\varepsilon}\ast(\theta_{0} g)\eta_{\varepsilon}\ast(\theta_{0} u_{t}) -
  \int^{+\infty}_{-\infty}\left\|\eta_{\varepsilon}\ast(\theta_{0}
u_{t})\right\|^{2}_{2}.
\end{aligned}
\end{equation}
Now we consider the convergence of the two sides of $(\ref{eq:b12})$
as $\varepsilon\rightarrow 0$. By standard arguments, using
$(\ref{eq:b2})$ and the fact that $u_{t}\in
L^{m}((0,T)\times\Omega)$ we get that
$\eta_{\varepsilon}\ast(\theta_{0}u_{t})\rightarrow\theta_{0}u_{t}$
strongly in $L^{m}((0,T)\times\Gamma_{1})$ and in
$L^{2}((0,T)\times\Omega)$. Hence, using $(\ref{eq:b8})$ and
remembering that $g\in L^{2}((0,T)\times\Omega)$, the right-hand
side of $(\ref{eq:b12})$ goes to
\begin{eqnarray*}
\int^{+\infty}_{-\infty}\int_{\Gamma_{1}}\theta^{2}_{0}\zeta u_{t}+\int^{+\infty}_{-\infty}\int_{\Omega}\theta^{2}_{0}g u_{t}-\int^{+\infty}_{-\infty}\left\|\theta_{0}u_{t}\right\|^{2}_{2} \\
= \int^{t}_{s}\int_{\Gamma_{1}}\zeta
u_{t}+\int^{t}_{s}\int_{\Omega}g
u_{t}-\int^{t}_{s}\left\|u_{t}\right\|^{2}_{2}.
\end{eqnarray*}
Concerning the left-hand side of $(\ref{eq:b12})$, we note that
$\mbox{supp }\rho_{\varepsilon}\subset(-2\varepsilon,2\varepsilon)$,
$0\leq\rho_{\varepsilon}=O(\varepsilon^{-1})$ and
    $\int^{+\infty}_{0}\rho_{\varepsilon}=\int^{0}_{-\infty}\rho_{\varepsilon}=\frac{1}{2}\int^{+\infty}_{-\infty}\rho_{\varepsilon}=\frac{1}{2}$.
Therefore, for sufficiently small $\varepsilon$,
    \[(\rho_{\varepsilon}\ast(\theta_{0}\nabla u)(t),\nabla u(t))- \tfrac 12\left\|\nabla u(t)\right\|^{2}_{2}=
    \int^{+\infty}_{0}\rho_{\varepsilon}(\tau) (\nabla u(t-\tau)-\nabla u(t),\nabla u(t))\,d\tau.
\]
Since, by $(\ref{eq:b9})$, $\tau\mapsto(\nabla u(t-\tau)\nabla
u(t),\nabla u(t))$ is continuous and vanishes when $\tau=0$, we
conclude that, as $\varepsilon\rightarrow 0$,
    $(\rho_{\varepsilon}\ast(\theta_{0}\nabla u)(t),\nabla u(t))\rightarrow\frac{1}{2}\left\|\nabla u(t)\right\|^{2}_{2}$.
The same result, of course, continues to hold when $t$ is replaced
by $s$. Then we can pass to the limit in $(\ref{eq:b12})$ and
conclude the proof of $(\ref{eq:1.10})$. To show that
$(\ref{eq:1.9})$ holds, we note that, by $(\ref{eq:1.10})$, it
follows that $t\mapsto\left\|\nabla u(t)\right\|^{2}_{2}$ is
continuous. Now we fix $t$ in $\left[0,T\right]$ and let
$t_{k}\rightarrow t$. Using $(\ref{eq:b9})$, we have
$\left\|u(t_{k})-u(t)\right\|^{2}_{V}=\left\|u(t_{k})\right\|^{2}_{V}+\left\|u(t)\right\|^{2}_{V}-2(u(t_{k}),u(t))_{V}\rightarrow
0$ as $k\rightarrow\infty$, concluding the proof.

\def\cprime{$'$}

\end{document}